\theoremstyle{plain}
    \newtheorem{thm}{Theorem}[section]
    \newtheorem{lem}[thm]{Lemma}
    \newtheorem{prop}[thm]{Proposition}
    \newtheorem{cor}[thm]{Corollary}
\theoremstyle{definition}    
    \newtheorem{defn}[thm]{Definition}
    \newtheorem{rem}[thm]{Remark}
\def\A{{\mathcal{A}}}
\def\an{{\mathrm{an}}}
\def\aru{{\arrow[u,"\rotatebox{90}{$\in$}",phantom]}}
\def\Aut{{\mathrm{Aut}}}
\def\C{{\mathbb{C}}}
\def\Cc{{\mathcal{C}}}
\def\can{{\mathrm{can}}}
\def\CH{{\mathrm{CH}}}
\def\D{{\mathcal{D}}}
\def\Dc{{\mathscr{D}}}
\def\del{\partial}
\def\div{{\mathrm{div}}}
\def\E{{\mathcal{E}}}
\def\ev{{\mathrm{ev}}}
\def\Frac{{\mathrm{Frac}}}
\def\id{{\mathrm{id}}}
\def\ind{{\mathrm{ind}}}
\def\Ker{{\mathrm{Ker}}}
\def\Km{{\mathrm{Km}}}
\def\L{{\mathscr{L}}}
\def\Lc{{\mathcal{L}}}
\def\lra{\:{\longrightarrow}\:}
\def\O{{\mathcal{O}}}
\def\P{{\mathbb{P}}}
\def\Pc{{\mathcal{P}}}
\def\Q{{\mathbb{Q}}}
\def\Qc{{\mathcal{Q}}}
\def\R{{\mathbb{R}}}
\def\rank{{\mathrm{rank}\:}}
\def\sgn{{\mathrm{sgn}}}
\def\Spec{{\mathrm{Spec}\,}}
\def\tr{{\mathrm{tr}}}
\def\X{{\mathcal{X}}}
\def\Y{{\mathcal{Y}}}
\def\Z{{\mathbb{Z}}}
\begin{document}
\title{Higher Chow cycles on a family of Kummer surfaces}
\author{Ken Sato}
\address{Department of Mathematics, Tokyo Institute of Technology}
\email{sato.k.da@m.titech.ac.jp}
\subjclass[2020]{Primary 14C15; Secondary 14J28}
\begin{abstract}
We construct a collection of families of higher Chow cycles of type $(2,1)$ on a 2-dimensional family of Kummer surfaces,
and prove that for a very general member, they generate a subgroup of rank $\ge 18$ in the indecomposable part of the higher Chow group.
Construction of the cycles uses a finite group action on the family, and 
the proof of their linear independence uses Picard-Fuchs differential operators.
\end{abstract}
\maketitle

\section{Introduction}
In the celebrated paper \cite{bloch}, Bloch defined the \textit{higher Chow groups} $\CH^p(X,q)$ for a variety $X$, which are generalizations of the classical Chow groups. 
They are identified with the motivic cohomology when $X$ is smooth.
Higher Chow groups appear in many aspects of algebraic geometry and number theory and are related to many important problems.
However, their structures are still mysterious for many varieties when the codimension $p$ is greater than 1. 
In general, it is not easy to construct non-trivial higher Chow cycles (cf. \cite{Fla}, \cite{Mil}, \cite{Spi}).

We study $\CH^2(X,1)$ for a certain type of Kummer surfaces $X$. 
A cycle $\xi\in \CH^2(X,1)$ is called \textit{indecomposable} if it is not contained in the image of the map
\begin{equation*}
\C^\times \otimes_{\Z}\mathrm{Pic}(X)=\CH^1(X,1)\otimes_\Z \CH^1(X) \lra \CH^2(X,1)
\end{equation*}
induced by the intersection product.
The image of this map is called the \textit{decomposable part}: this is an easily accessible part.
The cokernel of this map is called the \textit{indecomposable part} and is denoted by 
$\CH^2(X,1)_\ind$.
We are interested in the size of $\CH^2(X,1)_\ind$.
Explicit constructions of non-trivial elements of $\CH^2(X,1)_\ind$ for $K3$ surfaces $X$ were initiated by M\"uller-Stach \cite{MS}.
Since then, further examples have been constructed (\cite{Col}, \cite{dAMS}, \cite{CL}, \cite{Ker}, \cite{CDKL16}, \cite{Sas}). 

In this paper, we study a 2-dimensional family of Kummer surfaces $\widetilde{\X}\rightarrow T$ associated with products of elliptic curves. 
We construct a collection of families of higher Chow cycles on $\widetilde{\X}\rightarrow T$. 
They generate a subgroup $\Xi_t$ of $\CH^2(\widetilde{\X}_t,1)_{\ind}$ for every $t\in T$. 
Our main result is the following.
\begin{thm}$\mathrm{(Theorem\: \ref{mainthm})}$ For a very general\footnote{We use the word ``very general" for the meaning that ``outside of a countable union of proper(= not the whole space) analytic subsets".} $t\in T$, we have $\rank \Xi_t \ge 18$.
In particular, $\rank \CH^2(\X_t,1)_\ind \ge 18$.
\end{thm}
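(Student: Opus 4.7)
The plan is to detect the cycles via the Beilinson regulator
\[
r\colon \CH^2(\widetilde{\X}_t,1) \lra H^3_{\Dc}(\widetilde{\X}_t,\Z(2))
\]
and to exploit the fact that they come in families over $T$, so that the regulator images assemble into a normal function on $T$ with values in a variation of mixed Hodge structure. The crucial observation is that classes in the decomposable image of $\C^{\times}\otimes \mathrm{Pic}(\widetilde{\X}_t)$ have periods that are locally constant along loci where the Picard lattice does not jump, so a judiciously chosen Picard--Fuchs type differential operator on $T$ annihilates them. Hence a non-vanishing image under such an operator simultaneously certifies non-triviality in $\CH^2(\widetilde{\X}_t,1)_{\ind}$ and provides an analytic invariant sharp enough to distinguish different cycles.

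I would then use the finite group action on $\widetilde{\X}\to T$ that governs the construction of the cycles: it decomposes both $H^2(\widetilde{\X}_t,\C)$ and the corresponding space of normal functions into isotypical components, and each of the constructed cycles should be an eigenvector. Cycles lying in different isotypical components are automatically linearly independent, so the bound $18$ should organize as a sum indexed by the relevant non-trivial characters of the group, and the remaining task reduces to verifying linear independence within each isotypical piece using Picard--Fuchs operators adapted to that piece.

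The technical core is an explicit period computation. The natural coordinates on $T$ are the moduli of the two elliptic factors; in these coordinates one writes down the relevant sub-system of the Gauss--Manin connection on $\widetilde{\X}\to T$, and uses the Kummer construction to reduce the period integrals of the regulator classes to elliptic integrals on the two factor curves. Linear independence of the resulting multi-valued functions on $T$ can then be tested at a cuspidal degeneration of one of the factors, where they should decouple into one-variable elliptic dilogarithm expressions with distinct arguments, making linear independence a combinatorial verification on those arguments.

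Finally, the passage from linear independence of normal functions on $T$ to linear independence at a very general $t\in T$ is standard: any nontrivial linear relation among the values at $t$ either holds identically on $T$ or cuts out a proper analytic subset, and there are only countably many potential integer-coefficient relations to discard. The main obstacle I expect is the Picard--Fuchs calculation itself: inside a single isotypical piece one must choose the differential operators carefully enough to separate every cycle without accidental cancellation, which in practice demands a delicate asymptotic analysis of the period integrals at the chosen degeneration.
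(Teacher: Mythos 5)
Your overall skeleton does match the paper's: realize the regulators of the cycle families as a normal function on $T$, apply a Picard--Fuchs operator to remove the period ambiguity and detect indecomposability, and pass from a statement about sections over $T$ to very general fibers by a countability argument (that last step is exactly Lemma \ref{basiclem}). But the actual content of the theorem is the rank-$18$ lower bound, and there your sketch replaces the two computations that carry the proof with steps that are either unsubstantiated or would not work as stated. The isotypical-component argument is the main problem: $G_{\X}$ does not act linearly by constants on a fixed space of normal functions, because it moves the base $T$; the normal functions only carry a twisted linearization $\Psi_{\rho}(\varphi)=\chi(\rho)^{-1}\cdot\tau^\sharp(\varphi)$ (Proposition \ref{thxprop2}), where $\chi(\rho)$ is a non-constant algebraic function such as $\pm\sqrt{-1}\sqrt{1-a}$. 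The $18$ cycles are obtained as translates of three initial cycles by six group elements; they are not eigenvectors, and nothing in your proposal shows they fall into distinct isotypical pieces, so ``automatic'' independence is not available. What replaces this in the paper is the transformation formula $\Dc(\nu_{\tr}(\rho^*\xi))=\Theta_{\rho}(\Dc(\nu_{\tr}(\xi)))$ of Proposition \ref{thxprop}, which converts the group action into an explicit twist of the inhomogeneous term; independence is then read off from the explicit table of functions $\zeta\cdot F_1\cdot F_2$ with $F_1$ among six rational functions and $F_2$ among nine square-root monomials (Proposition \ref{rk18}).

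Your proposed independence test at a cuspidal degeneration, via ``elliptic dilogarithm expressions with distinct arguments,'' is likewise not backed by any computation and conflates two different objects. Before applying $\Dc$, each normal function is only defined modulo the rank-$4$ lattice spanned by $2P_i(a)P_j(b)$, which also degenerates at the cusp, so ``linear independence of the multivalued functions'' is not the relevant statement; after applying $\Dc$ the ambiguity disappears, but the outputs are elementary algebraic functions (inhomogeneous terms of hypergeometric equations), not dilogarithms, and no degeneration is needed. What is genuinely required, and absent from your sketch, is the explicit inhomogeneous Picard--Fuchs equation for the initial cycle, $\Dc(\Lc)=\frac{2}{a-b}\left(\frac{\sqrt{1-b}}{\sqrt{1-a}}-1,\;1-\frac{\sqrt{1-a}}{\sqrt{1-b}}\right)^{\mathsf{T}}$, obtained in Proposition \ref{preprop5} by a Stokes-theorem computation on an explicit $2$-chain bounding the regulator $1$-cycle, together with the cocycle computation yielding $\Theta_{\rho}$. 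Without these (or a worked-out substitute), the proposal sets up the right framework but does not yet establish $\rank\Xi_t\ge 18$.
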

Our construction uses the realization of the Kummer surfaces as the desingularized double coverings of $\P^1\times \P^1$, and our construction of higher Chow cycles uses special types of $(1,1)$-curves on $\P^1\times \P^1$. 
This is similar to the construction of higher Chow cycles on abelian surfaces in \cite{Sre14}.
The fact that $\CH^2(\widetilde{\X}_t,1)_\ind \neq 0$ already follows from the result of Section 6 of \cite{CDKL16} because our family is a base change of the family considered there.
Our novelty here is the rank estimate $\rank \CH^2(\widetilde{\X}_t,1)_\ind\ge 18$.
To produce many indecomposable cycles, we use a group action on the family as we will explain below.

A standard way to detect indecomposability is to use the \textit{transcendental regulator map}
\begin{equation*}
\begin{tikzcd}
r: \CH^2(\widetilde{\X}_t,1) \arrow[r] &H^3_{\mathcal D}(\widetilde{\X}_t, \Z(2))\simeq  \dfrac{F^1H^2(\widetilde{\X}_t,\C)^\vee}{H_2(\widetilde{\X}_t,\Z)} \arrow[r,twoheadrightarrow] &\dfrac{H^{2,0}(\widetilde{\X}_t)^\vee}{H_2(\widetilde{\X}_t,\Z)}
\end{tikzcd}
\end{equation*}
where the first map is the Beilinson regulator map and the last is the natural projection. 
It is known that the map $r$ factors through $\CH^2(\widetilde{\X}_t,1)_\ind$. 
For a family $\{\xi_t\}_{t\in T}$ of higher Chow cycles on $\widetilde{\X}\rightarrow T$, 
their transcendental regulators give rise to a certain type of normal function $\nu_\tr(\xi)=\{r(\xi_t)\}_{t\in T}$.
If we take the pairing of $\nu_{\tr}(\xi)$ with a relative 2-form $\omega$ on $\widetilde{\X}\rightarrow T$, this gives a multivalued holomorphic function on $T$.

Our construction begins with an initial cycle family $\xi_1-\xi_0$ (see Section 4 for the definition).
For this cycle family, the above multivalued function is given by the improper integral
\begin{equation}\label{ldayo}
\Lc (a,b) = 2\int_{\triangle}\frac{dxdy}{\sqrt{x(1-x)(1-ax)}\sqrt{y(1-y)(1-by)}}
\end{equation}
where $\triangle =\{(x,y)\in \R^2\::\: 0<y<x<1\}$.
The integral (\ref{ldayo}) is similar to the integral representation of Appell's hypergeometric functions (though the boundary $\del\triangle$ is not necessarily contained in the branching locus of the integrand). 
What is important to us is that $\Lc$ satisfies the following system of inhomogeneous differential equations:
\begin{equation}\label{diffsystem}
\left\{
\begin{aligned}
a(1-a) \frac{\del^2\Lc}{\del a^2} +(1-2a)\frac{\del\Lc}{\del a} -\frac{1}{4}\Lc &= \frac{2}{a-b}\left(\frac{\sqrt{1-b}}{\sqrt{1-a}}-1\right) \\
b(1-b) \frac{\del^2\Lc}{\del b^2} +(1-2b)\frac{\del\Lc}{\del b} -\frac{1}{4}\Lc &= \frac{2}{a-b}\left(1-\frac{\sqrt{1-a}}{\sqrt{1-b}}\right)
\end{aligned}
\right.
\end{equation}
We denote the differential operators appearing in the left-hand side of (\ref{diffsystem}) by $\Dc_1$ and $\Dc_2$, respectively. 
Note that they are the Gauss hypergeometric differential operators of type $\left(\frac{1}{2},\frac{1}{2},1\right)$.
Since the periods of $\widetilde{\X}\rightarrow T$ are annihilated by $\Dc=(\Dc_1,\Dc_2)$, $\Dc$ is called a \textit{Picard-Fuchs differential operator} on $\widetilde{\X}\rightarrow T$. 
From the differential equations (\ref{diffsystem}), we see that the image of $(\xi_1)_t-(\xi_0)_t$ under the transcendental regulator is non-trivial for very general $t$.

The second part of our construction is to produce more cycles from $\xi_1-\xi_0$ by using a group action on $\widetilde{\X}\rightarrow T$. 
For our purpose, automorphisms which act trivially on $T$ are not sufficient.
We consider a finite group $G_{\X}$ (isomorphic to a $\Z/2$-extension of $(\mathfrak{S}_4\times_{\mathfrak{S}_3} \mathfrak{S}_4)^2$) which acts on the total space $\widetilde{\X}$ and also on the base $T$. 
By letting $G_{\X}$ act on $\xi_1-\xi_0$, we obtain the subgroup $\Xi_t$ of $\CH^2(\widetilde{\X}_t,1)_{\ind}$. 
To show that $\Xi_t$ has rank $\ge 18$, 
we consider the $G_{\X}$-action on the normal function of $\xi_1-\xi_0$ and the differential equations (\ref{diffsystem}) it satisfies. 
This is the second main calculation in this paper.

Inhomogeneous Picard-Fuchs differential equations arising from normal functions are studied recently (e.g. \cite{dAMS08}, \cite{Ker} and \cite{CDKL16}). 
The differential equations (\ref{diffsystem}) is a $\Q(0)$-extension of the exterior tensor product of the two Gauss hypergeometric differential equations.
Hence this is  an example where the normal function can be expressed by a variant of hypergeometric functions.
In \cite{AO}, Asakura and Otsubo constructed a family of varieties and cycles whose normal functions are expressed by the generalized hypergeometric function ${}_3F_2$. 
Our cycles give an analogous example.

\subsection*{Outline of the paper}
This paper is divided into two parts. Part 1 is devoted to the construction of the initial cycle family $\xi_1-\xi_0$ and the computation of its normal function. 
In Section 2, we recall basic facts about higher Chow cycles and the regulator map. 
In Section 3 and Section 4, we define the family of Kummer surfaces and higher Chow cycles. 
In Section 5, we compute the normal function of $\xi_1-\xi_0$ and deduce the indecomposability. 
Part 2 is devoted to the construction and calculation of the group action. 
In Section 6, we construct the finite group action on the Kummer family.
In Section 7, we construct more higher Chow cycles using the group action and calculate the images of their normal functions under the Picard-Fuchs differential operator.
This calculation is based on the transformation formula proved in Section 8.

\subsection*{List of notations}
We summarize the frequently used notations in this paper. 
Usually, $\widetilde{(-)}$ indicates a blow-up.
The subscript $0$ (e.g. $T_0$) means an initial space;
when it is taken out (e.g. $T\rightarrow T_0$), this means taking an \'etale cover or an \'etale base change (e.g. $\widetilde{\X}\rightarrow \widetilde{\X}_0$).

\begin{table}[H]
\begin{tabular}{c|c}
notation & explanation\\ \hline
$S_0$ & $\P^1$ minus $0,1,\infty$ \\
$T_0$ & an affine open subset of $S_0\times S_0$ \\
$T$ & a finite \'etale cover of $T_0$ \\
$A_0$ & the coordinate ring of $S_0$ \\
$B_0$ & the coordinate ring of $T_0$ \\
$B$  & the coordinate ring of $T$ \\
$\Y_0$ (resp. $\Y$) & a family of $\P^1\times \P^1$ over $T_0$ (resp. $T$)\\
$\X_0$ (resp. $\X$) & a singular double cover of $\Y_0$ (resp. $\Y$) \\
$\A_0$ (resp. $\A$) & a family of products of elliptic curves over $T_0$ (resp. $T$) \\
$\Sigma$ & a set of 4 sections of $\P^1\times S_0$ and $\E$ over $S_0$\\
$\Sigma^2$ & a set of 16 sections of $\Y, \X$ and $\A$ over $T$\\
$\widetilde{\X}_0$ (resp. $\widetilde{\X}$) & the family of Kummer surfaces over $T_0$ (resp. $T$)\\
$\widetilde{\A}_0$ (resp.  $\A$)  & the blowing-up of $\A_0$ (resp. $\A$) along $\Sigma^2$ \\
$\widetilde{\Y}_0$ (resp. $\widetilde{\Y}$) & the blowing-up of $\Y_0$ (resp. $\Y$) along $\Sigma^2$\\
$U$ & an affine open subset of $\Y$ \\
$V$, $W$  & affine open subsets of $\X$ \\
$\D$  & the family of diagonal lines on $\P^1\times \P^1$ over $T$ \\
$\Cc$  & the pull-back of $\D$ by $\X\rightarrow \Y$ \\
$\widetilde{\Cc}$ & the strict transform of $\Cc$ \\
$f(x),g(y)$ & $f(x)=x(1-x)(1-ax), g(y)=y(1-y)(1-by)\in \O_{\Y}(U)$ \\
$G_{T_0}$  & a group ($\simeq \mathfrak{S}_3\times \mathfrak{S}_3$) acting on $T_0$\\
$G_{T}$  & a group ($\simeq \mathfrak{S}_4\times \mathfrak{S}_4$) acting on $T$\\
$G_{\Y_0}$ & a group ($\simeq \mathfrak{S}_4\times \mathfrak{S}_4$) acting on $\Y_0$\\
$G_{\Y}$ & a group ($\simeq (\mathfrak{S}_4\times_{\mathfrak{S}_3} \mathfrak{S}_4)^2$) acting on $\Y$ \\
$G_{\X}$ & a group ($\simeq (\mathfrak{S}_4\times_{\mathfrak{S}_3} \mathfrak{S}_4)^2\times_{\mu_2} \mu_4$) acting on $\X$ and $\widetilde{\X}$ \\
$\eta(=\eta_1\eta_2)$ & a 1-cocycle of $G_{\Y_0}$ \\
$\phi_1$, $\phi_2$  & a 1-cocycle of $G_{T}$ \\
$\chi$  & a 1-cocycle of $G_{\X}$ \\
$\Psi_{\rho}$ & a $G_{\X}$-linearization of $\O_T^{\an}$ and $\Qc$ \\
$\Theta_{\rho}$ & a $G_{\X}$-linearization of $(\O_T^{\an})^{\oplus 2}$ \\
\end{tabular}
\end{table}

\subsection*{Acknowledgement}
The author expresses his sincere gratitude to his supervisor Professor Tomohide Terasoma, who lead the author to this subject. 
The ideas of construction of higher Chow cycles were taught by him.
Furthermore, he read the draft of this paper carefully and gave many valuable comments which simplifies the arguments in this paper. He also thanks Professor Shuji Saito and Professor Takeshi Saito sincerely, who gave the author many helpful comments on this paper. 
The author is very grateful to the referee for a lot of helpful advices for improving the exposition of the paper.
The author is supported by the FMSP program by the University of Tokyo.

\part{Construction of the initial higher Chow cycles}
\section{Preliminaries}
\subsection{Higher Chow cycles of type (2,1)}
For a smooth variety $X$ over $\C$, let $\CH^p(X,q)$ be the higher Chow group defined by Bloch.
In this paper, we treat the case $(p,q)=(2,1)$.
In this case, the following fact is well-known (see, e.g., \cite{MS2} Corollary 5.3).
\begin{prop}\label{gerstenprop}
The higher Chow group $\mathrm{CH}^2(X,1)$ is isomorphic to the middle homology group of the following complex.
\begin{equation*}
K_2^{\mathrm{M}} (\C(X)) \xrightarrow{T} \displaystyle\bigoplus_{C\in X^{(1)}}\C(C)^\times \xrightarrow{\mathrm{div}}\displaystyle\bigoplus_{p\in X^{(2)}}\Z\cdot p 
\end{equation*}
Here $X^{(r)}$ denotes the set of closed subvarieties of $X$ of codimension $r$. 
The map $T$ denotes the tame symbol map from the Milnor $K_2$-group of the function field $\C(X)$.
\end{prop}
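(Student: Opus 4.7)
The plan is to unravel Bloch's definition of $\CH^2(X,1)$ via the cycle complex $z^2(X,\bullet)$ and identify each term explicitly, following the original argument of Bloch in \cite{bloch} (which is the source of the presentation in \cite{MS2}). Recall that $z^2(X,q)$ is the free abelian group on codimension-$2$ subvarieties of $X\times \Delta^q$ meeting every face properly, and $\CH^2(X,1)$ is defined as $\ker(d_1)/\mathrm{im}(d_2)$ where $d_q\colon z^2(X,q)\to z^2(X,q-1)$ is the alternating sum of face restrictions. So the task splits into three identifications: of $z^2(X,0)$ with $\bigoplus_p \Z\cdot p$, of $z^2(X,1)$ with $\bigoplus_C \C(C)^\times$, and of $\mathrm{im}(d_2)$ with the image of the tame symbol.

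The first identification is immediate from the definition. For the second, I would argue as follows. An irreducible codimension-$2$ subvariety $Z\subset X\times\Delta^1\simeq X\times\mathbb{A}^1$ meeting the two codimension-$1$ faces properly cannot lie in a fiber $X\times\{t\}$, so it must dominate some codimension-$1$ subvariety $C\subset X$ under the projection. The other projection then realizes $Z$ as (the closure of) the graph of a rational function $f\in\C(C)^\times$; this construction is reversible, giving the claimed isomorphism $z^2(X,1)\cong \bigoplus_{C\in X^{(1)}}\C(C)^\times$. Under this identification, the boundary $d_1$ sends $(C,f)$ to $[f=0]-[f=\infty]=\div(f)$, matching the second map in the complex.

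For the third identification — the main point — I would produce, for each Steinberg symbol $\{g,h\}\in K_2^{\mathrm{M}}(\C(X))$, an element of $z^2(X,2)$ whose boundary in $z^2(X,1)$ realizes the tame symbol. A standard model is the graph of the rational map $X\dashrightarrow \Delta^2$ determined by $(g,h)$; computing its face restrictions and using the bilinearity and Steinberg relations reproduces the formula $T(\{g,h\})=\sum_C (-1)^{v_C(g)v_C(h)} (g^{v_C(h)}/h^{v_C(g)})|_C$. The reverse containment — that every boundary from $z^2(X,2)$ can be written in terms of tame symbols — is the technically delicate part and relies on a moving lemma: one reduces a general cycle in $z^2(X,2)$ to one dominating some codimension-$1$ subvariety, and then applies the Gersten-type decomposition for Milnor $K$-theory.

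The main obstacle is this last step, since it requires controlling cycles on $X\times\Delta^2$ up to homological equivalence in the complex $z^2(X,\bullet)$. I would sidestep the full argument by invoking the Gersten resolution for Milnor $K$-theory of smooth varieties (equivalently, the Nesterenko–Suslin–Totaro theorem identifying $K_n^{\mathrm{M}}(F)$ with $\CH^n(\Spec F,n)$), which directly gives the required description of $\mathrm{im}(d_2)$ and thereby completes the proof. Since the statement is standard and the paper only uses it as a computational tool, I would present the above as a sketch and refer the reader to \cite{MS2} Corollary 5.3 for the detailed verification.
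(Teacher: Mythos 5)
The paper does not actually prove this proposition; it is quoted as a standard fact with the pointer to \cite{MS2}, Corollary 5.3, so your closing move of deferring the hard part to the Gersten/Nesterenko--Suslin--Totaro machinery and to that reference is, in effect, exactly what the paper does. The issue is with the explicit identifications you assert along the way, which are false as stated. The claim $z^2(X,1)\cong\bigoplus_{C\in X^{(1)}}\C(C)^\times$ does not hold at the level of cycle groups: an irreducible codimension-$2$ subvariety of $X\times\Delta^1$ satisfying the face conditions need not be a graph. First, your argument that it ``cannot lie in a fiber $X\times\{t\}$'' fails, because for $t$ away from the two face points a divisor of the form $C\times\{t\}$ meets both faces emptily, and empty intersection is proper, so such cycles do lie in $z^2(X,1)$. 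Second, even a cycle dominating a divisor $C$ may be a multisection of degree $>1$ over $C$ (e.g.\ cut out over $\C(C)$ by an irreducible equation of degree $2$ in the $\Delta^1$-coordinate), hence the graph construction is not surjective and the ``reversibility'' you invoke breaks down. The same defect affects your description of boundaries from $z^2(X,2)$ solely in terms of graphs of pairs $(g,h)$.

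The correct statement is only up to homology of the complex $z^2(X,\bullet)$: every admissible cycle is homologous to a sum of graphs, and the relations among such sums are generated by tame symbols. Establishing this is precisely the nontrivial content of the proposition and cannot be obtained by the naive term-by-term matching; it requires Bloch's localization/moving results, homotopy invariance, and the identification $\CH^n(\Spec F,n)\simeq K_n^{\mathrm{M}}(F)$, assembled via the coniveau (Gersten-type) spectral sequence as in \cite{MS2}. So as a self-contained argument your sketch has a genuine gap exactly where you claim the easy identifications; as a citation-based treatment it coincides with the paper's, which offers no proof beyond the reference.
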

Therefore, a higher Chow cycle in $\mathrm{CH}^2(X,1)$ is represented by a formal sum
\begin{equation}\label{formalsum}
\sum_j (C_j, f_j)\in \displaystyle\bigoplus_{C\in X^{(1)}}\C(C)^\times
\end{equation}
where $C_j$ are prime divisors on $X$ and $f_j\in \C(C_j)^\times$ are non-zero rational functions 
on them such that $\sum_j {\mathrm{div}}_{C_j}(f_j) = 0$ as codimension 2
cycles on $X$.

A \textit{decomposable cycle} in $\mathrm{CH}^2(X,1)$ is an element of the image of the group homomorphism
\begin{equation}\label{intersectionproduct}
\mathrm{Pic}(X)\otimes_\Z \Gamma(X,\mathcal{O}_X^\times)=\CH^1(X)\otimes_\Z \CH^1(X,1) \longrightarrow \mathrm{CH}^2(X,1)
\end{equation}
induced by the intersection product. 
Let $C$ be a prime divisor on $X$ and $[C]\in \mathrm{Pic}(X)$ be the class corresponding to $C$. 
The image of $[C]\otimes \alpha$ under (\ref{intersectionproduct}) is represented by $(C,\alpha|_C)$ in the presentation (\ref{formalsum}). 
The cokernel of (\ref{intersectionproduct}) is denoted by $\mathrm{CH}^2(X,1)_{\mathrm{ind}}$. 
For $\xi \in \CH^2(X,1)$, $\xi_{\ind}$ denotes its image in $\mathrm{CH}^2(X,1)_{\mathrm{ind}}$. 
A cycle $\xi$ is called \textit{indecomposable} if $\xi_{\ind }\neq 0$.

\subsection{The regulator map}
By the canonical identification of $\mathrm{CH}^2(X,1)$ with the motivic cohomology $H^3_M(X,\Z(2))$, there exists the map
\begin{equation}\label{regulatormap}
\mathrm{reg}: \mathrm{CH}^2(X,1) \longrightarrow H^3_{\mathcal D}(X, \Z(2)) = \frac{H^2(X,\C)}{F^2H^2(X,\C)+H^2(X,\Z(2))}
\end{equation}
 called the \textit{regulator map}.
The target $H^3_{\mathcal D}(X, \Z(2))$ denotes the Deligne cohomology of $X$. 
This map can be regarded as the Abel-Jacobi map for $\CH^2(X,1)$. 
We recall an explicit formula for (\ref{regulatormap}) following \cite{Le} p.458--459. \par
Let $X$ be a $K3$ surface over $\C$. By the Poincar\'e duality, the Deligne cohomology of $X$ is isomorphic to the generalized complex torus
\begin{equation}\label{Delignefunctional}
H^3_\D(X, \Z(2)) \simeq \frac{(F^1H^2(X,\C))^\vee}{H_2(X,\Z)}
\end{equation}
where $(F^1H^2(X,\C))^\vee$ is the dual $\C$-vector space of $F^1H^2(X,\C)$ and we regard $H_2(X,\Z)$ as a subgroup of $(F^1H^2(X,\C))^\vee$ by the integration. 
Under the isomorphism (\ref{Delignefunctional}), the image of the cycle $\xi$ represented by a formal sum $\sum_j (C_j, f_j)$ under the regulator map is described as follows.

Let $D_j$ be the normalization of the closed curve $C_j$ on $X$. 
Let $\mu_j: D_j\rightarrow X$ denote the composition of $D_j\rightarrow C_j$ and $C_j\rightarrow X$. 
We will define a topological 1-chain $\gamma_j$ on $D_j$.
If $f_j$ is constant, we define $\gamma_j = 0$.
If $f_j$ is not constant, we regard $f_j$ as a finite morphism from $D_j$ to $\P^1$. 
Then we define $\gamma_j = f_j^{-1}([\infty, 0])$ where $[\infty, 0]$ is a path on $\P^1$ from $\infty$ to $0$ along the positive real axis. 
By the condition $\sum_j \div_{C_j}(f_j) = 0$, $\gamma = \sum_j (\mu_j)_*\gamma_j$ is a topological 1-cycle on $X$.
Since $H_1(X, \Z) = 0$, there exists a 2-chain $\Gamma$ on $X$ such that $\partial\Gamma = \gamma$.
In this paper, $\gamma$ and $\Gamma$ are called the \textit{1-cycle associated with $\xi$} and a \textit{2-chain associated with $\xi$}, respectively.
Then the image of $\xi$ under the regulator map is represented by the pairing
\begin{equation*}
\langle \mathrm{reg}(\xi), [\omega]\rangle = \displaystyle\int_\Gamma\omega  + \sum_j\dfrac{1}{2\pi\sqrt{-1}}\displaystyle\int_{D_j-\gamma_j}\log (f_j)\mu_j^*\omega\quad ([\omega]\in F^1H^2(X,\C)).
\end{equation*}
Here $\log(f_j)$ is the pull-back of the logarithmic function on $\P^1-[\infty,0]$ by $f_j$.

In this paper, we use the following variant of the regulator map.
\begin{defn}\label{transregdefn}
The \textit{transcendental regulator map} is the composite of the regulator map $(\ref{regulatormap})$ and the projection induced by $H^{2,0}(X) \hookrightarrow F^1H^2(X,\C)$. 
\begin{equation*}
\begin{tikzcd}
r: \CH^2(X,1) \arrow[r] &\dfrac{F^1H^2(X,\C)^\vee}{H_2(X,\Z)} \arrow[r,twoheadrightarrow] &\dfrac{H^{2,0}(X)^\vee}{H_2(X,\Z)}
\end{tikzcd}
\end{equation*}
We denote this map by $r$. 
\end{defn}

By taking the pairing with a non-zero holomorphic 2-form $\omega$ on $X$, 
we have an isomorphism $H^{2,0}(X)^\vee/H_2(X,\Z) \simeq \C/\Pc(\omega)$ where $\Pc(\omega)$ is the subgroup of $\C$ defined by 
\begin{equation*}
\Pc(\omega) = \left\{\displaystyle \int_{\Gamma}\omega  \in \C : \Gamma \text{ is a toplogical 2-cycles on }X.\right\}.
\end{equation*}
i.e. $\Pc(\omega)$ is the set of periods of $X$ with respect to $\omega$. 
By the above formula, the image of $\xi \in \CH^2(X,1)$ under the transcendental regulator map is 
\begin{equation}\label{transregval}
\langle r(\xi), [\omega]\rangle  \equiv \int_{\Gamma}\omega \mod \Pc(\omega).
\end{equation}
where $\Gamma$ is a 2-chain associated with $\xi$. 
If $\xi$ is decomposable, $r(\xi) = 0$. 
This implies the following.

\begin{prop}\label{transregprop}
If $r(\xi)\neq 0$, we have $\xi_{\ind}\neq 0$. In other words, the transcendental regulator map factors through $\CH^2(X,1)_{\ind}$.
\end{prop}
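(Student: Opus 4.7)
My plan is to prove the contrapositive form of the statement, namely that the transcendental regulator map $r$ vanishes on decomposable cycles. Once this is established, $r$ descends to a map on the cokernel $\CH^2(X,1)_{\ind}$, which is exactly what the proposition asserts. So the key task is: given any decomposable cycle $\xi$, show that $\langle r(\xi), [\omega]\rangle = 0$ in $\C/\Pc(\omega)$ for every $[\omega]\in H^{2,0}(X)$.

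First I would reduce to a generating case. By the description of the intersection product map (\ref{intersectionproduct}) recalled above, a decomposable cycle is represented by a finite sum of elements of the form $(C, \alpha|_C)$, where $C\subset X$ is a prime divisor and $\alpha\in \Gamma(X,\O_X^\times)=\C^\times$ (recall $X$ is a proper smooth surface, so global units are constants). By linearity of $r$ it suffices to check the vanishing for a single $(C,\alpha|_C)$ with $\alpha\in \C^\times$.

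Next I would apply the explicit formula (\ref{transregval}) from the preceding discussion. Let $\mu: D\to X$ be the normalization of $C$, and set $f = \alpha|_C$, which is the \emph{constant} function with value $\alpha$ on $D$. By definition of the 1-chain $\gamma_j$ in the construction, the constant case gives $\gamma=0$. Consequently we may choose the 2-chain $\Gamma$ with $\partial\Gamma=\gamma$ to be $\Gamma=0$, and then the ``membrane term'' $\int_\Gamma\omega$ vanishes (any other choice differs by a topological 2-cycle, which only changes the answer modulo $\Pc(\omega)$, so this ambiguity is absorbed in passing to $H^{2,0}(X)^\vee/H_2(X,\Z)$). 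The remaining term
\begin{equation*}
\frac{1}{2\pi\sqrt{-1}}\int_{D-\gamma}\log(f)\,\mu^*\omega = \frac{\log\alpha}{2\pi\sqrt{-1}}\int_{D}\mu^*\omega
\end{equation*}
also vanishes: for $[\omega]\in H^{2,0}(X)$ the form $\omega$ is a holomorphic 2-form on the surface $X$, so its pull-back $\mu^*\omega$ is a holomorphic 2-form on the 1-dimensional variety $D$, which is automatically zero. Combining the two vanishings gives $\langle r((C,\alpha|_C)), [\omega]\rangle = 0$ for every holomorphic 2-form $\omega$, as required.

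I do not expect any real obstacle here: the argument is a direct, purely formal consequence of the explicit regulator formula recalled just before the proposition, together with the dimensional triviality of $(2,0)$-forms on curves. The only point requiring a little care is the well-definedness of the ``$\int_\Gamma\omega$'' term modulo periods, but this is built into the target $H^{2,0}(X)^\vee/H_2(X,\Z)$ of $r$ and is automatic.
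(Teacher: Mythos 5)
Your proposal is correct and takes essentially the same approach as the paper: the paper deduces the proposition from the one-line observation, stated just before it, that $r(\xi)=0$ for decomposable $\xi$, which is exactly what you verify by combining the explicit formula (with $f$ constant, hence $\gamma=0$ and $\Gamma$ a $2$-cycle) with the vanishing of $\mu^*\omega$ on curves. No gaps.
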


\subsection{A relative setting}
Since it is difficult to prove non-vanishingness of an element of $H^{2,0}(X)^\vee/H_2(X,\Z)$, we use its relative version.
Let $\pi:\X\rightarrow S$ be an algebraic family of $K3$ surfaces over a variety $S$. 
We define sheaves $\Pc$ and $\Qc$ of abelian groups on $S$ by 
\begin{equation*}
\begin{aligned}
\Pc &= \mathrm{Im}(R^2\pi_*\underline{\Z}_ {\X}\rightarrow \mathcal{H}om_{\O^{\an}_{S}}(\pi_*\Omega^2_{\X/S}, \O^\an_{S})) \\
\Qc &= \mathrm{Coker}(R^2\pi_*\underline{\Z}_ {\X}\rightarrow \mathcal{H}om_{\O^{\an}_{S}}(\pi_*\Omega^2_{\X/S}, \O^\an_{S})).
\end{aligned}
\end{equation*}
where $\underline{\Z}_{\X}$ is the constant sheaf on $\X$ and $\O^\an_{S}$ is the sheaf of holomorphic functions on $S$. 
Note that $\Pc$ is a local system on $S$.

By considering the pairing with a non-zero relative 2-form $\omega \in \Gamma(\X,\Omega^2_{\X/S})$,
we have an isomorphism $\mathcal{H}om_{\O^{\an}_{S}}(\pi_*\Omega^2_{\X/S}, \O^\an_{S})\simeq \O^\an_{S}$.
Hence $\Pc$ is isomorphic to the subsheaf $\Pc_\omega$ of $\O^\an_{S}$ generated by period functions with respect to $\omega$ and $\Qc$ is isomorphic to $\Qc_\omega = \O^\an_{S}/\Pc_\omega$.
For a local section $\varphi$ of $\O^\an_{S}$, its image in $\Qc_\omega$ is denoted by $[\varphi]$. 

For each $s\in S$, there exists the evaluation map 
\begin{equation*}
\ev_s: \Gamma(S,\Qc) \rightarrow H^{2,0}(\X_s)^\vee/H_2(\X_s,\Z).
\end{equation*}
Under the isomorphisms $\Qc\simeq \Qc_{\omega}$ and $H^{2,0}(\X_s)^\vee/H_2(\X_s,\Z)\simeq \C/\Pc(\omega_s)$ induced by $\omega$ and $\omega_s$, the evaluation map coincides with the map 
\begin{equation*}
\Qc_\omega = \O^\an_{S}/\Pc_\omega \ni [f] \longmapsto f(s)\mod \Pc(\omega_s)\quad \in \C/\Pc(\omega_s)
\end{equation*}
where $f(s)\in \C$ denotes the value of the holomorphic function $f$ at $s$.
The following elementary lemma is crucial for the result.
\begin{lem}\label{basiclem}
For a non-zero element $\nu \in \Gamma(S,\Qc)$, we have $\ev_s(\nu)\neq 0$ for very general $s\in S$.
\end{lem}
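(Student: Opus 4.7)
The plan is to localize in $S$ to a simply connected analytic open on which the local system of periods trivializes, represent $\nu$ by a single holomorphic function, and exhibit the bad locus as a countable union of proper analytic subsets. Since $\nu \neq 0$ as a section of the sheaf $\Qc$, some stalk $\nu_{s_0}$ is non-zero. I would choose a connected, simply connected analytic open $U \ni s_0$ over which the local system $R^2\pi_*\underline{\Z}_\X$ (hence $\Pc$) trivializes, together with a $\Z$-basis $\gamma_1, \ldots, \gamma_N$ of $H_2(\X_s,\Z)$ for $s \in U$. Set $p_i := \int_{\gamma_i} \omega \in \O^{\an}_{S}(U)$, so that $\Pc_\omega(U) = \{\sum_i n_i p_i : (n_i) \in \Z^N\}$. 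On the simply connected $U$ the sheaf and presheaf quotients coincide, so $\nu|_U = [f]$ for some $f \in \O^{\an}_{S}(U)$, and $\nu_{s_0} \neq 0$ forces $\nu|_U \neq 0$, i.e.\ $f \notin \Pc_\omega(U)$.

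Using $\ev_s([f]) = f(s) \bmod \Pc(\omega_s)$ together with $\Pc(\omega_s) = \{\sum_i n_i p_i(s) : (n_i) \in \Z^N\}$, the bad locus in $U$ becomes
\begin{equation*}
\{s \in U : \ev_s(\nu) = 0\} \;=\; \bigcup_{(n_i) \in \Z^N} \Bigl\{ s \in U : f(s) - \sum_i n_i p_i(s) = 0 \Bigr\}.
\end{equation*}
For each $(n_i)$, the holomorphic function $f - \sum_i n_i p_i$ is not identically zero on the connected $U$ (else $f \in \Pc_\omega(U)$, a contradiction), so its zero set is a proper analytic subset of $U$; hence the bad locus in $U$ is a countable union of proper analytic subsets.

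To upgrade this to a statement about very general $s \in S$, I would take any countable family $\{Z_j\}$ of proper analytic subsets of $S$. Each $Z_j \cap U$ is a proper analytic subset of $U$, because $U \subseteq Z_j$ would force $Z_j = S$ by the irreducibility of $S$ (which I assume throughout, else argue component-wise). Then the bad locus in $U$ combined with $\bigcup_j (Z_j \cap U)$ is a countable union of proper analytic subsets of the irreducible complex manifold $U$, and its complement in $U$ is non-empty by the Baire category theorem. Any $s$ in this complement simultaneously satisfies $\ev_s(\nu) \neq 0$ and avoids every $Z_j$, establishing the lemma.

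The main technical obstacle is the local-to-global step: because of the monodromy of $\Pc$, the functions $f - \sum_i n_i p_i$ are defined only on trivializing opens $U$ and do not glue to global objects on $S$, so the bad locus is not a priori presented as a union of analytic subsets of $S$. The Baire-category argument above circumvents this by confining the entire computation to a single trivializing $U$ and exploiting that $U$ is analytically dense in the irreducible base.
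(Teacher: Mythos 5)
Your core argument is the paper's argument: shrink the base, lift $\nu$ to a holomorphic function, pick generators $p_1,\dots,p_N$ of the period group, and note that the bad locus is the union of the zero sets of the countably many non-zero holomorphic functions $f-\sum_i n_ip_i$. The paper does exactly this (with a representative $\varphi$ and a free basis $f_1,\dots,f_r$ of $\Pc_\omega$, forming $F_{\underline{c}}=\varphi-\sum_i c_if_i$), and your extra care in choosing $U$ around a point where the stalk of $\nu$ is non-zero, and in using simple connectivity to lift $\nu$ and trivialize $\Pc_\omega$, is fine.

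Where you diverge is the final packaging. The paper declares the question local and shrinks $S$, so its conclusion is the strong form matching the footnote's definition of ``very general'': the bad locus is contained in a countable union of proper analytic subsets (of the shrunken base). Your Baire-category paragraph instead proves: for every prescribed countable family $\{Z_j\}$ of proper analytic subsets of $S$ there exists a good point outside $\bigcup_j Z_j$. The quantifiers are reversed, and this is genuinely weaker than the assertion that $\ev_s(\nu)\neq 0$ holds very generally; it is also the form that would not plug in verbatim where the lemma is used later, e.g.\ in the proof of Theorem \ref{mainthm} the lemma is applied simultaneously to all (countably many) non-zero elements of $N$, which requires taking a countable union of the individual bad loci, i.e.\ the strong form. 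Your intermediate statement --- that the bad locus intersected with $U$ is a countable union of proper analytic subsets of $U$ --- is exactly the paper's conclusion and is what is actually needed, so no idea is missing; but you should state the conclusion in that local form (as the paper does) rather than replace it by the existence statement. Your instinct about the genuine subtlety is right: because of monodromy the functions $f-\sum_i n_ip_i$, hence their zero sets, do not globalize to analytic subsets of $S$, which is precisely why the paper localizes instead of attempting a global upgrade.
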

\begin{proof}
Since the question is local, we can shrink $S$ in the sense of the classical topology. 
By fixing a relative 2-form $\omega \in \Gamma(\X,\Omega_{\X/S}^2)$, we have the isomorphism $\Qc \simeq \O_S^\an/\Pc_\omega$. 
We may assume that there exist a holomorphic function $\varphi\in \O_S^{\an}(S)$ such that $\nu = [\varphi]$ and a free basis $f_1,f_2,\dots,f_r$ of $\Pc_\omega(S)$.

For each $\underline{c} = (c_i) \in \Z^r$, we define the holomorphic function $F_{\underline{c}}$ by 
\begin{equation*}
F_{\underline{c}} = \varphi - \sum_{i=1}^r c_if_i.
\end{equation*}
Since $\nu = [\varphi]$ is non-zero in $\Qc_\omega(S)$, $F_{\underline{c}}$ is a non-zero holomorphic function for each $\underline{c} \in \Z^r$.
Consider the countable family $\{F_{\underline{c}}\}_{\underline{c}\in \Z^r}$ of the holomorphic functions.
Then outside of the zeros of the functions in this family, $\varphi(s)\not\in \Pc(\omega_s) = \langle f_1(s),f_2(s),\dots, f_r(s) \rangle_\Z$.
Hence $\ev_s(\nu)\neq 0$.
\end{proof}

The following corollary is also used in technical propositions.

\begin{cor}\label{basiccor}
If local sections $\nu, \nu'$ of $\Qc$ on an open subset $U$ satisfy $\ev_s(\nu) = \ev_s(\nu')$ for any $s\in U$, we have $\nu=\nu'$. 
\end{cor}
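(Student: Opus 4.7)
The plan is to reduce the statement directly to Lemma \ref{basiclem} by passing to the difference $\eta := \nu - \nu' \in \Gamma(U,\Qc)$, which makes sense because $\Qc$ is a sheaf of abelian groups. First I would check that for each $s\in U$ the evaluation map $\ev_s$ is a homomorphism of abelian groups: under the identification $\Qc \simeq \O_S^{\an}/\Pc_\omega$ furnished by a choice of non-zero relative $2$-form, $\ev_s$ sends the class $[f]$ to $f(s)\bmod \Pc(\omega_s)$, and both "evaluate at $s$" and the passage to the quotient by the period subgroup are additive. Consequently the hypothesis $\ev_s(\nu)=\ev_s(\nu')$ for every $s\in U$ translates to $\ev_s(\eta)=0$ for every $s\in U$.

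Next I would apply Lemma \ref{basiclem} in contrapositive form to $\eta$, viewed as a global section of $\Qc$ over the base $U$: the family $\X\to S$, together with the sheaves $\Pc$ and $\Qc$, restricts to $U$, so the lemma is applicable there. If $\eta$ were non-zero, the lemma would produce a very general — in particular, at least one — point $s\in U$ with $\ev_s(\eta)\neq 0$, contradicting the first step. Hence $\eta=0$, that is, $\nu=\nu'$.

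There is essentially no real obstacle: the content is packaged in Lemma \ref{basiclem}, and the two compatibilities to check are routine. The only mild subtlety is to ensure that the lemma applies over an arbitrary open subset $U$ rather than over the whole base $S$; but its proof is local on the base (one may shrink in the classical topology to trivialize $\Pc_\omega$ and to express $\eta$ by a single holomorphic function modulo a finite $\Z$-basis of periods), so restricting from $S$ to $U$ is harmless.
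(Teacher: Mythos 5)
Your proposal is correct and is exactly the intended argument: the paper states this corollary without proof as an immediate consequence of Lemma \ref{basiclem}, and passing to the difference $\nu-\nu'$, using additivity of $\ev_s$, and applying the lemma (whose proof is local, so it works over $U$) in contrapositive form is precisely that derivation. The only implicit point, which is standard, is that the complement of a countable union of proper analytic subsets is non-empty, so a very general point indeed exists.
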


Finally, we consider the regulator map in the relative setting.
Suppose that we have irreducible divisors $\Cc_j$ on $\X$ which are smooth over $S$ and non-zero rational functions $f_j$ on $\Cc_j$ whose zeros and poles are also smooth over $S$.
Assume that they satisfy the condition $\sum_j\div_{(\Cc_j)_s}((f_j)_s) = 0$ for each $s\in S$.
Then we have a family of higher Chow cycles $\xi = \{\xi_s\}_{s\in S}$ such that $\xi_s\in \CH^2(\X_s,1)$ is represented by the formal sum $\sum_{j}((\Cc_j)_s,(f_j)_s)$.
A family of higher Chow cycles constructed in this way is called an \textit{algebraic family of higher Chow cycles} in this paper.
\begin{rem}
In the above situation, let $\xi$ be the higher Chow cycle on the total space $\X$ defined by the formal sum $\sum_j(\Cc_j,f_j)$.
For each $s\in S$, $\xi_s \in \CH^2(\X_s,1)$ is the pull-back of $\xi$ by $\X_s\hookrightarrow \X$.
Hence we can regard an algebraic family of higher Chow cycle as a higher Chow cycle of the total space $\X$.
\end{rem}
If we shrink $S$ in the sense of the classical topology, there exists a $C^\infty$-family of topological 2-chains $\{\Gamma_s\}_{s\in S}$ such that $\Gamma_s$ is a 2-chain associated with $\xi_s$.
If we fix a relative 2-form $\omega$, the function 
\begin{equation*}
S\ni s \mapsto \int_{\Gamma_s}\omega_s \in \C
\end{equation*}
is holomorphic (cf. \cite{CL} Proposition 4.1).
Hence we can define the element $\nu_{\mathrm{tr}}(\xi) \in \Gamma(S,\Qc)$ such that 
\begin{equation*}
\ev_s(\nu_{\mathrm{tr}}(\xi)) = r(\xi_s)
\end{equation*}
for every $s\in S$.
This $\nu_{\mathrm{tr}}(\xi)$ can be regarded as a part of the normal function associated with $\xi$.

By combining Proposition \ref{transregprop} and Lemma \ref{basiclem}, we have the following.
\begin{prop}\label{mainproperty}
Let $\X\rightarrow S$ be an algebraic family of $K3$ surfaces and $\xi = \{\xi_s\}_{s\in S}$ be an algebraic family of higher Chow cycles.
Suppose $\nu_{\mathrm{tr}}(\xi)\neq 0$.
Then for very general $s\in S$, we have $(\xi_s)_{\ind} \neq 0$.
\end{prop}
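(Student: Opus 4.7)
The statement is essentially a formal consequence of the two preceding results in the subsection, so my plan is to simply package them together without introducing any new ingredient.

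First I would recall the setup: the hypothesis gives a non-zero global section $\nu_{\tr}(\xi)\in \Gamma(S,\Qc)$ with the crucial defining property $\ev_s(\nu_{\tr}(\xi)) = r(\xi_s)$ for every $s\in S$, which was established just before the proposition (using a local $C^\infty$-family of $2$-chains $\Gamma_s$ and the fact that $s\mapsto \int_{\Gamma_s}\omega_s$ is holomorphic). So what needs to be controlled is the locus of $s$ where $\ev_s(\nu_{\tr}(\xi))$ vanishes.

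Next I would invoke Lemma \ref{basiclem} directly applied to $\nu=\nu_{\tr}(\xi)$: since $\nu_{\tr}(\xi)\neq 0$ by hypothesis, the lemma tells us that $\ev_s(\nu_{\tr}(\xi))\neq 0$ outside a countable union of proper analytic subsets of $S$, i.e.\ for very general $s$. Combined with the identity $\ev_s(\nu_{\tr}(\xi))=r(\xi_s)$, this means $r(\xi_s)\neq 0$ for very general $s\in S$.

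Finally, I would apply Proposition \ref{transregprop}, which asserts that $r$ factors through $\CH^2(\X_s,1)_{\ind}$ and that $r(\xi_s)\neq 0$ implies $(\xi_s)_{\ind}\neq 0$. Chaining these implications produces the desired conclusion. There is essentially no obstacle here: the proof is purely an assembly of Lemma \ref{basiclem} and Proposition \ref{transregprop} via the compatibility $\ev_s\circ \nu_{\tr} = r(\xi_s)$, and the only thing to be careful about is that this compatibility holds globally on $S$ (which has already been checked) so that the very general locus produced by Lemma \ref{basiclem} is exactly the locus where indecomposability follows.
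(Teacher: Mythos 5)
Your proposal is correct and is exactly the argument the paper intends: it states Proposition \ref{mainproperty} as an immediate combination of Lemma \ref{basiclem} (applied to $\nu_{\tr}(\xi)$) and Proposition \ref{transregprop}, via the defining compatibility $\ev_s(\nu_{\tr}(\xi))=r(\xi_s)$. Nothing is missing.
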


 \section{The family of Kummer surfaces}
In this section, we define the family of Kummer surfaces. 
This is the famous family, and our main purpose is to fix the notation.
Let $S_0 = \P^1-\{0,1,\infty\}$ and $A_0= \C\left[c,\frac{1}{c(c-1)}\right]$ be the coordinate ring of $S_0$.
Let $\E\rightarrow S_0$ be the Legendre family of elliptic curves over $S_0$.
The fiber of $\E \rightarrow S_0$ over $c\in S_0$ is the elliptic curve 
\begin{equation*}
\E_c: y^2=x(1-x)(1-c x).
\end{equation*}
We have the natural $S_0$-morphism $\E \rightarrow \P^1\times S_0$ defined by $(x,y)\mapsto x$.
Let $\Sigma$ denote the set of 2-torsion sections of $\E\rightarrow S_0$.
The set $\Sigma$ consists of 4 sections corresponding to $x=0,1,1/c,\infty$.
We use the same symbol $\Sigma$ for its image under $\E\rightarrow \P^1\times S_0$.

Next, we define the family of Kummer surfaces and related families.
Let $T_0$ be the (Zariski) open set of $S_0\times S_0$ defined by
\begin{equation*}
T_0 = \left\{(a,b)\in S_0\times S_0: a\neq b, 1-b, \frac{1}{b}, \frac{1}{1-b},\frac{b-1}{b},\frac{b}{b-1}\right\}
\end{equation*}
and $B_0$ be the coordinate ring of $T_0$.
We will define the following families of varieties over $T_0$.
\begin{equation*}
\begin{tikzcd}
\widetilde{\A}_0 \arrow[r] \arrow[d,"\text{blowing-up}"{near start}, "\text{along }\Sigma^2"{near end}]  &[30pt] \widetilde{\X}_0 \arrow[r]\arrow[d,"\text{blowing-up}"{near start}, "\text{along }\Sigma^2"{near end}] &[30pt] \widetilde{\Y}_0 \arrow[d,"\text{blowing-up}"{near start}, "\text{along }\Sigma^2"{near end}]  \\
\A_0 \arrow[r,"2:1\text{ cover}"'] & \X_0 \arrow[r,"2:1\text{ cover}"'] & \Y_0
\end{tikzcd}
\end{equation*}
Let $\Y_0=\P^1\times \P^1\times T_0$.
We use $x$ (resp. $y$) for coordinates of the first (resp. second) $\P^1$ of $\Y_0$.
We define the family of abelian surfaces $\A_0\rightarrow T_0$ by the restriction of $\E \times \E \rightarrow S_0\times S_0$ to $T_0$.
The fiber of $\A_0\rightarrow T_0$ over $(a,b)\in T_0$ corresponds to the product of elliptic curves $\E_a\times \E_b$.
We have the natural $T_0$-morphism $\A_0\rightarrow \Y_0$ induced by the direct product of $S_0$-morphism $\E \rightarrow \P^1\times S_0$.
The map $\A_0\rightarrow \Y_0$ is a 4:1 cover.
Let $\iota: \A_0\rightarrow \A_0$ be the map of taking inverses and we define $\X_0$ as the quotient of $\A_0$ by $\iota$.
Then the morphism $\A_0\rightarrow \Y_0$ factors $\X_0$.

We define $\Sigma^2$ as the set of 2-torsion sections of $\A_0 \rightarrow T_0$.
The set $\Sigma^2$ consists of 16 sections corresponding to $\Sigma\times \Sigma$. 
We use the same symbol $\Sigma^2$ for its images in $\X_0$ and $\Y_0$.
Using the coordinates $x$ and $y$, the set $\Sigma^2$ can be described as 16 points
\begin{equation*}
(x,y) \in \{0,1,1/a,\infty\}\times \{0,1,1/b,\infty\}.
\end{equation*}

Let $\widetilde{\A}_0, \widetilde{\X}_0$ and $\widetilde{\Y}_0$ be the blowing-ups of $\A_0$, $\X_0$ and $\Y_0$ along $\Sigma^2$.
The universality of the blowing-up induces the maps $\widetilde{\A}_0\rightarrow \widetilde{\X}_0\rightarrow \widetilde{\Y}_0$.
Since $\widetilde{\X}_0\rightarrow \X_0$ is the minimal resolution of singularities, $\widetilde{\X}_0\rightarrow T_0$ is the family of Kummer surfaces.
At $t=(a,b)\in T_0$, the fiber $\widetilde{\X}_t$ is the Kummer surface $\Km(\E_a\times\E_b)$ associated with $\E_a\times \E_b$.
For $\sigma\in \Sigma^2$, $Q_{\sigma}$ denotes the exceptional divisor over $\sigma\in \Sigma^2$.
The configuration of $Q_\sigma$ on $\widetilde{\X}_0$ is described in Figure \ref{Qsigmapicture}. 
\begin{figure}[h]
\centering
\includegraphics[width=9cm]{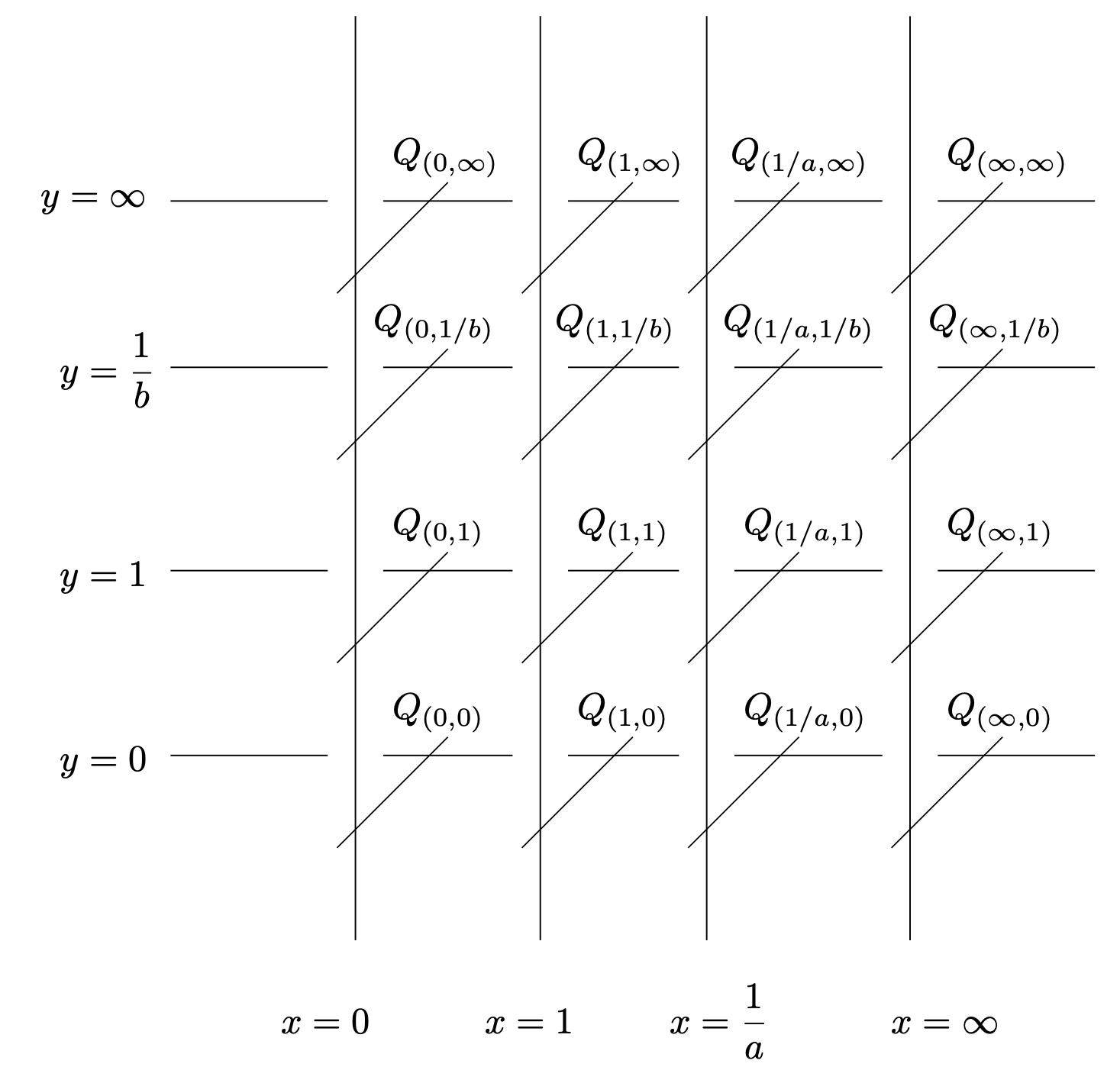}
\caption{The exceptional divisors $Q_\sigma$ on $\widetilde{\X}_0$}\label{Qsigmapicture}
\end{figure} 

Finally, we define local charts on $\Y_0$ and $\widetilde{\X}_0$.
Let $U=\Spec B_0[x,y]\subset \Y_0$ be the affine open subset which is the complement of the divisors $x=\infty$ and $y=\infty$.
The inverse image of $U$ by $\widetilde{\X}_0\rightarrow \Y_0$ is covered by two open affine subschemes $V=\Spec B_0[x,y,v]/(v^2f(x)-g(y))$ and $W=\Spec B_0[x,y,w]/(w^2g(y)-f(x))$ where $f(x)=x(1-x)(1-ax)$ and $g(y)=y(1-y)(1-by)$.
These two subsets are glued by the relation $v = 1/w$.

\section{Construction of initial higher Chow cycles}
In this section, we construct families $\xi_0, \xi_1$ and $\xi_{\infty}$ of higher Chow cycles on a base change $\widetilde{\X}\rightarrow T$ of $\widetilde{\X}_0\rightarrow T_0$.
\subsection{Construction of higher Chow cycles at fibers}
First, we explain the construction at each fiber over $t=(a,b)\in T_0$.
Let $\D \subset \P^1\times \P^1=(\Y_0)_t$ be the diagonal curve. 
Since $a\neq b$, $\D $ intersects with the branching locus of $(\X_0)_t\rightarrow (\Y_0)_t$ as Figure \ref{Dpicture}.

 \begin{figure}[h]
\centering
\includegraphics[width=8cm,pagebox=cropbox,clip]{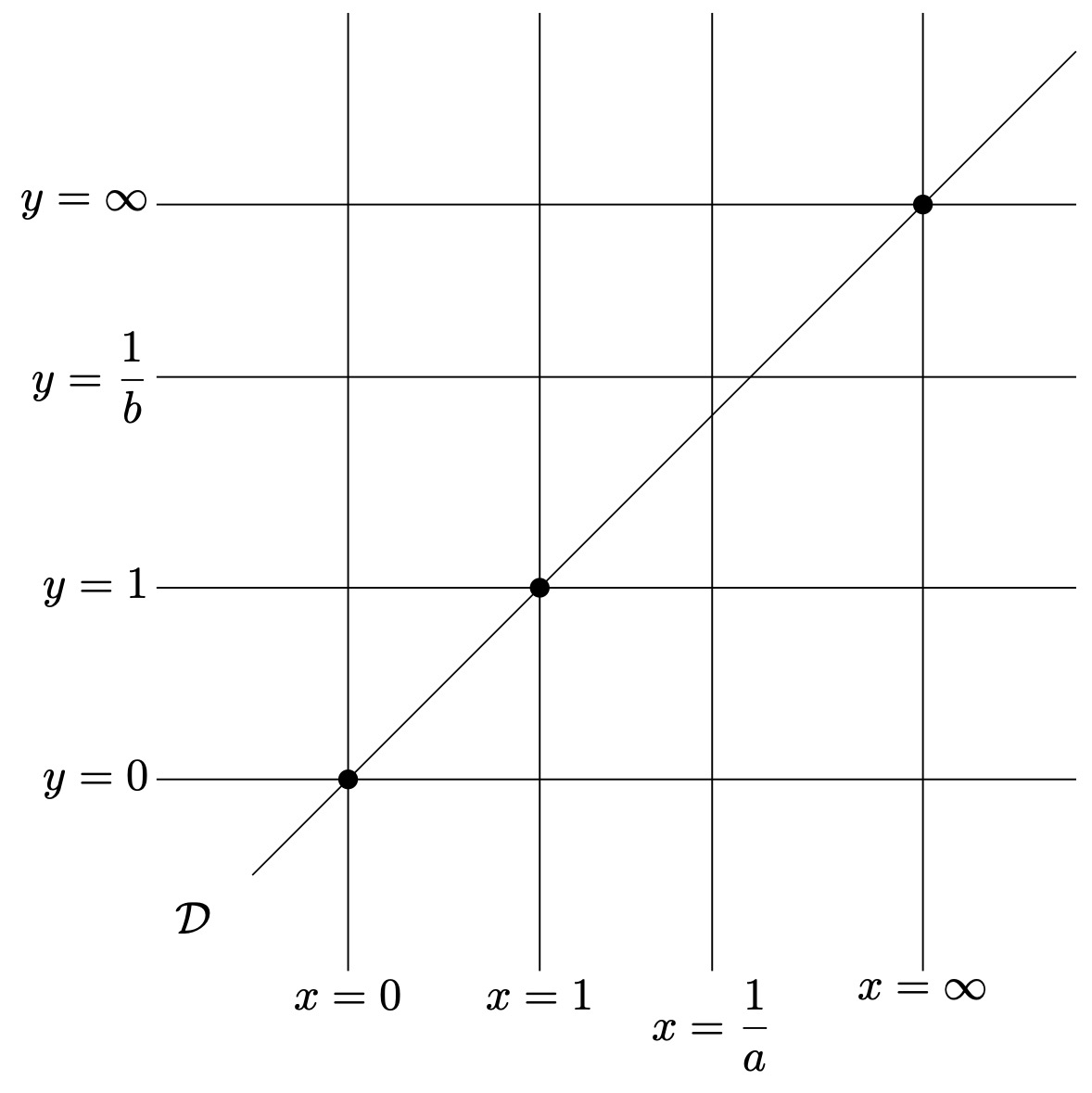}
\caption{The diagonal curve $\D$ and the branching locus}
\label{Dpicture}
\end{figure}

Let $\Cc \subset (\X_0)_t$ be the pull-back of $\D$ by $(\X_0)_t\rightarrow (\Y_0)_t$ and $\widetilde{\Cc}$ be the strict transform of $\Cc$ by the blowing-up $(\widetilde{\X}_0)_t\rightarrow (\X_0)_t$.
Then we see that $\widetilde{\Cc}$ is smooth and $\widetilde{\Cc} \rightarrow \D $ is a double covering ramified at 2 points.
Hence $\widetilde{\Cc}$ is isomorphic to $\P^1$.
Furthermore, for each $\bullet \in \{0,1,\infty\}$, $\widetilde{\Cc} $ intersects with $Q_{(\bullet,\bullet)}$ at 2 points $p_{\bullet}^+$ and $p_{\bullet}^-$ (cf. Figure \ref{curvefigure}).

\begin{figure}[h]
\centering
\includegraphics[width=9cm,pagebox=cropbox,clip]{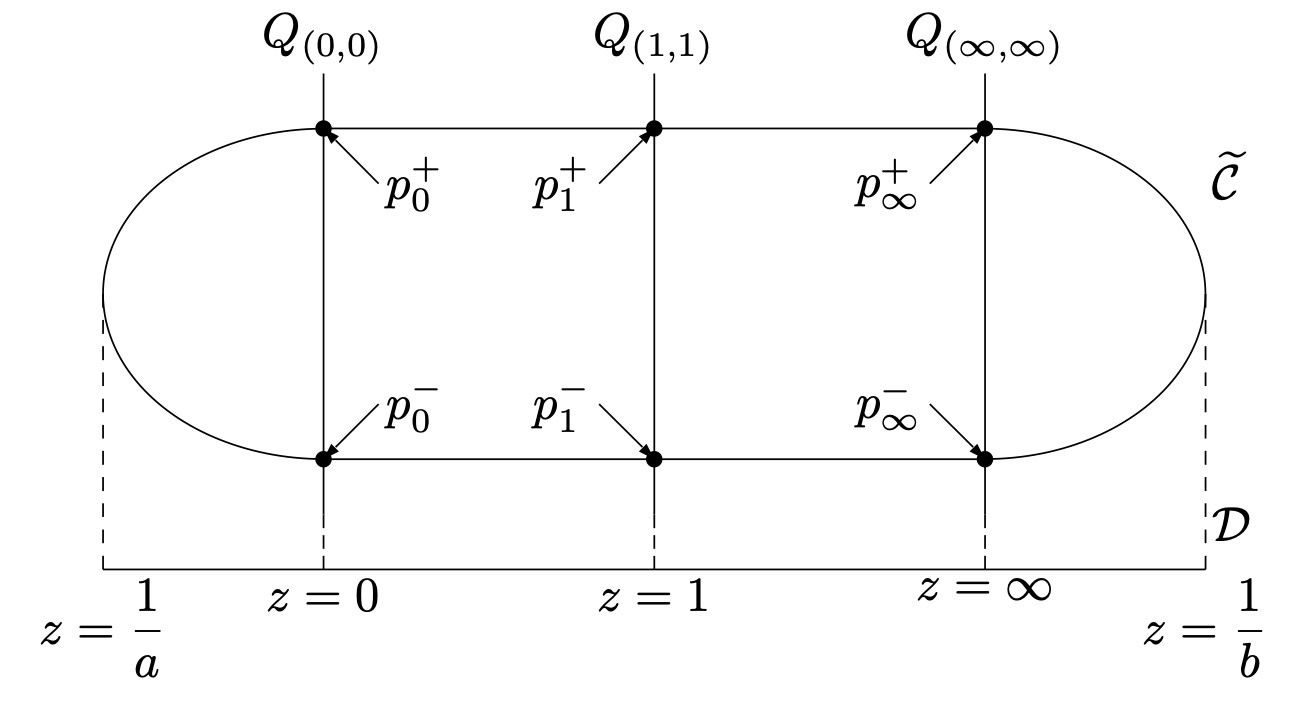}
\caption{The intersections of $\widetilde{\Cc}$, $Q_{(0,0)}$, $Q_{(1,1)}$ and $Q_{(\infty,\infty)}$}
\label{curvefigure}
\end{figure}

Hence we can find rational functions $\psi_0, \psi_1,\psi_\infty \in \C(\widetilde{\Cc})$ and $\varphi_{\bullet} \in \C(Q_{(\bullet, \bullet)})\:\: (\bullet = 0,1,\infty\})$ which satisfy the following relations.
\begin{equation}\label{phipsi}
\begin{aligned}
&\div_{\widetilde{\Cc}}(\psi_0) = p_{0}^- - p_{0}^+ = - \div_{Q_{(0,0)}}(\varphi_0) \\
&\div_{\widetilde{\Cc}}(\psi_1) = p_{1}^- - p_{1}^+ = - \div_{Q_{(1,1)}}(\varphi_1) \\
&\div_{\widetilde{\Cc}}(\psi_\infty) = p_\infty^- - p_\infty^+ = - \div_{Q_{(\infty,\infty)}}(\varphi_\infty) 
\end{aligned}
\end{equation}
We define $(\xi_0)_t,(\xi_1)_t,(\xi_\infty)_t\in \CH^2((\widetilde{\X}_0)_t,1)$ by the formal sums
\begin{equation}\label{familyXican}
\begin{aligned}
&(\xi_0)_t =  (\widetilde{\Cc}, \psi_0) + (Q_{(0,0)},\varphi_0), \\
&(\xi_1)_t=  (\widetilde{\Cc}, \psi_1) + (Q_{(1,1)},\varphi_1), \\
&(\xi_\infty)_t = (\widetilde{\Cc}, \psi_\infty) + (Q_{(\infty,\infty)},\varphi_\infty).
\end{aligned}
\end{equation}
\subsection{Construction of families of higher Chow cycles}
To get families of higher Chow cycles, it is enough to construct rational functions $\varphi_\bullet$ and $\psi_\bullet$ on the family.
However, the intersection points $p_{1}^+$ and $ p_{1}^-$ (resp. $p_{\infty}^+$ and $ p_{\infty}^-$) interchange by the monodromy of $T_0$.
Hence it is impossible to construct such rational functions for $\bullet = 1,\infty$.
Thus it is necessary to take a finite \'etale base change of $T_0$ to define the families of cycles.

Let $B = B_0[\sqrt{a},\sqrt{b},\sqrt{1-a},\sqrt{1-b}]$ and $T\rightarrow T_0$ be the finite \'etale cover corresponding to $B_0\rightarrow B$. The base changes of $\Y_0,\X_0,\A_0, \widetilde{\Y}_0, \widetilde{\X}_0$ and $\widetilde{\A}_0$ by $T\rightarrow T_0$ are denoted by $\Y,\X,\A, \widetilde{\Y}, \widetilde{\X}$ and $\widetilde{\A}$, respectively.

Let $\D\subset \Y$ be the closed subscheme defined by the local equation $x=y$, $\Cc$ be its pull-back by $\X\rightarrow \Y$ and $\widetilde{\Cc}$ be its strict transform by the blowing up $\widetilde{\X}\rightarrow \X$.

On the local chart $V$, $\widetilde{\Cc}\hookrightarrow \widetilde{\X}$ is described by the following ring homomorphism.
\begin{equation*}
B[x,y,v]/(v^2f(x)-g(y))  \lra B[z,v]/(v^2(1-az)-(1-bz));  x,y,v\mapsto z,z,v
\end{equation*}
By this description, we see that $Q_{(0,0)}$ and $\widetilde{\Cc}$ intersect at $(x,y,v) = (0,0,\pm 1)$, and $Q_{(1,1)}$ and $\widetilde{\Cc}$ intersect at
$(x,y,v) = \left(1,1,\pm \sqrt{1-b}/\sqrt{1-a}\right)$.
By the local computation on another local chart containing $(\infty,\infty)\in \Sigma^2$, $Q_{(\infty,\infty)}$ and $\widetilde{\Cc}$ intersect at $(\xi,\eta,v')=\left(0,0,\pm \sqrt{b}/\sqrt{a}\right)$ where local coordinates $\xi,\eta$ and $v'$ are defined by $\xi=1/x$, $\eta =1/y$ and $v' = x^2v/y^2$.
Hence we can define rational functions $\psi_\bullet \in \C(\widetilde{\Cc})$ and $\varphi_{\bullet}\in \C(Q_{(\bullet,\bullet)})\: (\bullet \in \{0,1,\infty\})$ by the following equations.
\begin{equation*}
\begin{aligned}
& \psi_0 = (v+1)\cdot(v-1)^{-1} && \varphi_0 = (v-1)\cdot(v + 1)^{-1} \\
& \psi_1 = \left(v + \frac{\sqrt{1-b}}{\sqrt{1-a}}\right)\cdot\left(v - \frac{\sqrt{1-b}}{\sqrt{1-a}}\right)^{-1} && \varphi_1 = \left(v-\frac{\sqrt{1-b}}{\sqrt{1-a}}\right)\cdot\left(v+\frac{\sqrt{1-b}}{\sqrt{1-a}}\right)^{-1} \\
& \psi_\infty = \left(v' + \frac{\sqrt{b}}{\sqrt{a}}\right)\cdot\left(v' - \frac{\sqrt{b}}{\sqrt{a}}\right)^{-1} && \varphi_\infty =\left(v'- \frac{\sqrt{b}}{\sqrt{a}}\right)\cdot\left(v'+ \frac{\sqrt{b}}{\sqrt{a}}\right)^{-1}
\end{aligned}
\end{equation*}
They satisfy the relations in (\ref{phipsi}) and we define algebraic families of higher Chow cycles $\xi_0 = \left\{(\xi_0)_t\right\}_{t\in T}, \xi_1 = \left\{(\xi_1)_t\right\}_{t\in T}$ and $\xi_{\infty} =\left\{(\xi_\infty)_t\right\}_{t\in T}$ by the equations (\ref{familyXican}).

\section{Computation of the regulator}
In this section, we compute the image of $(\xi_1)_t-(\xi_0)_t$ under the regulator map and prove its indecomposability.
Our main result is the following.
\begin{thm}\label{mainthm1}
For very general $t\in T$, $(\xi_1)_{t}-(\xi_0)_{t}$ is an indecomposable cycle.
\end{thm}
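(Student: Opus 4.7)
By Proposition \ref{mainproperty}, it suffices to show that the section $\nu_{\tr}(\xi_1-\xi_0)\in\Gamma(T,\Qc)$ is non-zero. The plan is to identify an explicit 2-chain associated with $(\xi_1)_t-(\xi_0)_t$, express the resulting period function in terms of the integral $\Lc(a,b)$ of (\ref{ldayo}), and then invoke the inhomogeneous Picard-Fuchs equations (\ref{diffsystem}) to rule out the possibility that $\Lc$ is a period.

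The first step is to construct a 2-chain. Work in the affine chart $U\subset \Y$ and consider the real triangle $\triangle=\{(x,y)\in\R^2 : 0\le y\le x\le 1\}$. Over the interior of $\triangle$ the two branches $v=\pm\sqrt{g(y)/f(x)}$ of the double cover $\X\to \Y$ are well-defined, the edges $\{x=1\}$ and $\{y=0\}$ lie in the branch locus, and the diagonal $\{x=y\}$ is cut out of $\Cc$. After blowing up the corners $(0,0),(1,1)\in\Sigma^2$, one obtains a real 2-chain $\Gamma_t\subset\widetilde{\X}_t$ whose boundary consists of two arcs on $\widetilde{\Cc}$ running between the points $p_0^\pm$ and $p_1^\pm$, together with two compensating arcs on the exceptional divisors $Q_{(0,0)}$ and $Q_{(1,1)}$. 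One then verifies, by comparing with the divisors in (\ref{phipsi}) and the explicit formulas for $\psi_0,\psi_1,\varphi_0,\varphi_1$, that $\partial \Gamma_t$ equals the 1-cycle associated with $(\xi_1)_t-(\xi_0)_t$.

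The second step is the period computation. Take the relative holomorphic 2-form $\omega$ on $\widetilde{\X}\to T$ which, on the chart $V$, is represented by $dx\wedge dy/v$ (this is the descent of the standard product form $dx\wedge dy/(y_1y_2)$ on $\widetilde{\A}$ under the $\iota$-action). Using the identification $v^2=g(y)/f(x)$ and the fact that $\Gamma_t$ covers $\triangle$ doubly with matching orientation, a direct change of variables gives
\begin{equation*}
\int_{\Gamma_t}\omega \;=\; 2\int_{\triangle}\frac{dx\,dy}{\sqrt{f(x)g(y)}} \;=\; \Lc(a,b),
\end{equation*}
up to the contribution from the small arcs on the exceptional divisors, which vanishes since $\omega$ extends holomorphically and the arcs can be shrunk to points. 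By the formula (\ref{transregval}), we conclude that $\nu_{\tr}(\xi_1-\xi_0)$ is represented, as a section of $\Qc_\omega$, by the class $[\Lc(a,b)]$.

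Finally, assume for contradiction that $\nu_{\tr}(\xi_1-\xi_0)=0$. Then $\Lc$ would lie in $\Pc_\omega(T)$, i.e., $\Lc$ would be expressible as a $\Z$-linear combination of period functions. But period functions are annihilated by the Picard-Fuchs operators $\Dc_1$ and $\Dc_2$, whereas the inhomogeneous system (\ref{diffsystem}) shows that $\Dc_1\Lc$ and $\Dc_2\Lc$ are non-zero algebraic functions on $T$. This contradiction gives $\nu_{\tr}(\xi_1-\xi_0)\neq 0$, and Proposition \ref{mainproperty} yields the claim. The principal obstacle will be the careful bookkeeping in the first step: matching the oriented boundary of the lifted triangle (together with the small arcs on $Q_{(0,0)}$ and $Q_{(1,1)}$) with the 1-cycle coming from (\ref{phipsi}), and verifying that the contributions to the regulator coming from the $\log(f_j)\mu_j^*\omega$ terms in the definition of $\reg$ do not spoil the identification of $\int_{\Gamma_t}\omega$ with $\Lc(a,b)$ modulo periods.
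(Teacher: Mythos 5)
Your proposal follows essentially the same route as the paper: lift the real triangle $\triangle$ to the double cover to obtain a 2-chain whose pairing with $\omega$ is $\Lc(a,b)$, then combine the fact that $\Dc_1,\Dc_2$ annihilate all period functions (Proposition \ref{preprop1}) with the inhomogeneous equations (\ref{diffsystem}) to conclude $\nu_{\tr}(\xi_1-\xi_0)\neq 0$ and apply Proposition \ref{mainproperty}. The only points to tighten are exactly those the paper treats explicitly: the chain must be the \emph{difference} $K_+-K_-$ of the two lifts (since $\omega=\frac{dx\wedge dy}{vf(x)}$, not $\frac{dx\wedge dy}{v}$, changes sign between the two sheets, two lifts taken with ``matching orientation'' would integrate to zero), and its boundary agrees with the 1-cycle of $(\xi_1)_t-(\xi_0)_t$ only up to 1-boundaries supported on $\widetilde{\Cc}$, $Q_{(0,0)}$, $Q_{(1,0)}$ and $Q_{(1,1)}$, a discrepancy the paper absorbs through the notion of a 2-chain associated with a cycle ``in a weak sense''.
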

The main ingredients of the proof is the following two propositions.

\begin{prop}\label{preprop1}
Let $\omega$ be the relative 2-form $\frac{dx\wedge dy}{vf(x)}$ on the family $\widetilde{\X}\rightarrow T$ and $\Dc_1,\Dc_2: \O_{T}^\an \rightarrow \O_{T}^\an$ be the differential operators defined by 
\begin{equation*}
\begin{aligned}
&\Dc_1 = a(1-a) \frac{\del^2}{\del a^2} +(1-2a)\frac{\del }{\del a} -\frac{1}{4} \\
&\Dc_2 = b(1-b) \frac{\del^2}{\del b^2} +(1-2b)\frac{\del }{\del b} -\frac{1}{4}.
\end{aligned}
\end{equation*}
Let $\Dc = \begin{pmatrix} \Dc_1 \\ \Dc_2\end{pmatrix}:\O_{T}^\an \rightarrow \left(\O_{T}^\an\right)^{\oplus 2}$.
Then for any local section $f$ of $\Pc_{\omega}\subset \O_{T}^\an$, we have $\Dc(f)=0$.
In particular, $\Dc$ factors the sheaf $\Qc_{\omega}$. 
We use the same symbol $\Dc$ for the induced morphisms $\Qc_\omega \rightarrow \left(\O_{T}^\an\right)^{\oplus 2}$ and $\Qc\simeq \Qc_\omega \rightarrow \left(\O_{T}^\an\right)^{\oplus 2}$.
\end{prop}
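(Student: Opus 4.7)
The strategy is to exploit the decomposition of the Kummer fibers as coming from products of Legendre elliptic curves, reducing the statement to the classical Picard–Fuchs equation for the Legendre family. Since $\Pc_\omega$ is, by definition, the image of a local system under a map of sheaves, it is locally spanned by a finite basis of period functions of the form $t\mapsto \int_{\Gamma_t}\omega_t$ for flat families of 2-cycles $\Gamma_t\subset \widetilde{\X}_t$. It thus suffices to verify $\Dc_1(P)=\Dc_2(P)=0$ for each such period $P$.

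First, I would identify the relative 2-form $\omega$ with a product of elliptic 1-forms. Writing $y_1=\sqrt{f(x)}$ and $y_2=\sqrt{g(y)}$ for the Legendre $y$-coordinates on the two factors $\E_a$ and $\E_b$, the defining equation $v^2 f(x)=g(y)$ of $V$ gives $v=\pm y_2/y_1$, so $vf(x)=\pm y_1y_2$. Hence on the double cover $\widetilde{\A}\to \widetilde{\X}$, the pullback of $\omega$ equals, up to sign, the product
\begin{equation*}
\omega_a\wedge \omega_b, \qquad \omega_a=\frac{dx}{y_1}, \quad \omega_b=\frac{dy}{y_2},
\end{equation*}
where $\omega_a,\omega_b$ are the relative holomorphic 1-forms on the two Legendre families.

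Second, I would compute $\Pc_\omega$ in terms of products of elliptic periods. Since $\widetilde{\A}_t\to \widetilde{\X}_t$ is 2:1, for any 2-cycle $\Gamma$ on $\widetilde{\X}_t$ one has $2\int_\Gamma\omega=\int_{\pi^{-1}\Gamma}\omega_a\wedge\omega_b$. The pullback of $\omega_a\wedge\omega_b$ to $\widetilde{\A}_t$ vanishes along the exceptional divisors of $\widetilde{\A}_t\to \A_t$, so its period integrals descend to periods on $\A_t=\E_a\times \E_b$. Under the Künneth decomposition $H_2(\E_a\times \E_b,\Z)=(H_0\otimes H_2)\oplus (H_1\otimes H_1)\oplus (H_2\otimes H_0)$, the outer two summands pair to zero with $\omega_a\wedge\omega_b$ (restricting $\omega_a$ to a slice $\{\mathrm{pt}\}\times \E_b$ yields zero, and symmetrically). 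Thus $\Pc_\omega$ is locally spanned by functions of the form
\begin{equation*}
(a,b)\longmapsto P_a(a)\cdot P_b(b),\qquad P_a(a)=\int_{\gamma_a}\frac{dx}{\sqrt{f(x)}},\quad P_b(b)=\int_{\gamma_b}\frac{dy}{\sqrt{g(y)}},
\end{equation*}
with $\gamma_a\in H_1(\E_a,\Z)$ and $\gamma_b\in H_1(\E_b,\Z)$.

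Third, I would invoke (or verify directly) the classical Picard–Fuchs equation for the Legendre family: $P_a(a)$ is annihilated, as a function of $a$, by $a(1-a)\partial_a^2+(1-2a)\partial_a-\frac{1}{4}$. This can be proved by differentiating under the integral sign and exhibiting the relevant combination of $\partial_a^k(1/y_1)$ as an exact 1-form on $\E_a$ away from the branch locus. Applying $\Dc_1$ to a product then yields
\begin{equation*}
\Dc_1(P_a\,P_b)=P_b\cdot\left[a(1-a)P_a''+(1-2a)P_a'-\tfrac{1}{4}P_a\right]=0,
\end{equation*}
and similarly $\Dc_2(P_aP_b)=0$. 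Hence $\Dc$ annihilates every local section of $\Pc_\omega$ and therefore factors through $\Qc_\omega$.

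The main obstacle is the bookkeeping in the first two steps: correctly tracking the signs, the factor of 2 from the double cover $\widetilde{\A}\to\widetilde{\X}$, and verifying that the exceptional divisors appearing in the two resolutions $\widetilde{\A}\to\A$ and $\widetilde{\X}\to\X$ contribute nothing to $\Pc_\omega$. Once the identification of $\omega$ with $\omega_a\wedge\omega_b$ on the chart $V$ is secured, the rest is a formal consequence of the Legendre Picard–Fuchs equation.
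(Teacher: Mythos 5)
Your proposal is correct and follows essentially the same route as the paper, which also reduces $\Pc_\omega$ to products of Legendre periods $P_i(a)P_j(b)$ via the $2:1$ cover $\widetilde{\A}\rightarrow\widetilde{\X}$, the blow-up $\widetilde{\A}\rightarrow\A$ and the K\"unneth decomposition (the paper phrases your transfer/preimage step through the Gysin map $\pi_!\circ p^*$ and its dual), and then applies the classical hypergeometric Picard--Fuchs equation factorwise. The only difference is that the paper additionally proves the $2P_i(a)P_j(b)$ form an integral basis of $\Pc_\omega$, a refinement not needed for the annihilation statement itself.
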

Recall that $\Pc_\omega$ is the local system consisting of period functions with respect to $\omega$ and $\Qc_\omega$ is the quotient of $\O_{T}^\an$ by $\Pc_\omega$. 
The differential operator $\Dc$ is called a \textit{Picard-Fuchs differential operator} because it annihilates all period functions with respect to $\omega$.

By Proposition \ref{mainproperty}, to prove Theorem \ref{mainthm1}, it is enough to show $\nu_{\tr}(\xi_1-\xi_0)$ is non-zero.
Then by the Proposition \ref{preprop1}, it is enough to show $\Dc(\nu_{\tr}(\xi_1-\xi_0))$ is non-zero.
To prove this, we will find an explicit multivalued function which represents $\nu_{\tr}(\xi_1-\xi_0)$.

For $(a,b)\in T_0$ such that $a,b\in \R_{<0}$, let $\Lc(a,b)$ be the improper integral
\begin{equation}\label{functionL}
\Lc(a,b)=2\int_{\triangle}\frac{dxdy}{\sqrt{x(1-x)(1-ax)}\sqrt{y(1-y)(1-by)}}
\end{equation}
where $\triangle =\{(x,y)\in \R^2\::\: 0<y<x<1\}$.
This integral converges and defines a local holomorphic function around $(a,b)$. 
There exists a lift $\Lc$ of $\Lc(a,b)$ by $T\rightarrow T_0$ which satisfies the the following properties.
\begin{prop}\label{preprop5}
\begin{enumerate}
\item For any $t\in T$, $\Lc$ can be analytically continued to an open neighborhood of $t$.
\item The multivalued holomorphic function $\Lc$ represents $\nu_{\tr}(\xi_1-\xi_0)$ under the isomorphism $\Qc_\omega\simeq \Qc$ induced by the relative 2-form $\omega$ in Proposition \ref{preprop1}.
In particular, for any $t\in T$, we have 
\begin{equation}\label{transregK}
\langle r((\xi_1)_t-(\xi_0)_t), [\omega_t]\rangle \equiv \Lc(t) \mod \Pc(\omega_t)
\end{equation}
where $\omega_t$ is the pull-back of $\omega$ at $\widetilde{\X}_t$.
\item The multivalued holomorphic function $\Lc$ satisfies the following system of the differential equations.
\begin{equation}\label{diffL}
\Dc(\Lc)=
\frac{2}{a-b}
 \begin{pmatrix}
 \frac{\sqrt{1-b}}{\sqrt{1-a}}-1\\[1.5ex]
1-\frac{\sqrt{1-a}}{\sqrt{1-b}}
\end{pmatrix}.
\end{equation}
In particular, $\Dc(\nu_{\tr}(\xi_1-\xi_0))$ coincides with the right-hand side of (\ref{diffL}).
\end{enumerate}
\end{prop}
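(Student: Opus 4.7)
The proposition bundles three claims: analytic continuation of $\Lc$, identification of $\Lc$ with $\nu_{\tr}(\xi_1-\xi_0)$, and the inhomogeneous Picard-Fuchs ODE system. I would attack them in this order.

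For (1), after passing to the cover $T$ we have coherent branches of $\sqrt{a},\sqrt{b},\sqrt{1-a},\sqrt{1-b}$, so the integrand becomes a single-valued meromorphic 2-form on a Zariski open subset of $\P^1\times\P^1\times T$, with pole divisor $\{x\in\{0,1,1/a,\infty\}\}\cup\{y\in\{0,1,1/b,\infty\}\}$. For real $a,b<0$ the interval $[0,1]$ avoids the moving singularities $1/a,1/b$, and near the vertices of $\triangle$ the integrand is locally $L^1$ of type $x^{-1/2}$. To continue $\Lc$ to general $t\in T$ I would isotope $\triangle$ through compact 2-chains in $\P^1\times\P^1$ keeping $\partial\triangle\subset\{x=0\}\cup\{x=1\}\cup\{y=0\}\cup\{y=x\}$ while avoiding the moving branches $\{x=1/a\}\cup\{y=1/b\}$; the convergence is preserved because the $L^1$ bound is uniform under compact deformation.

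For (2), recall the formal-sum representation $\xi_1-\xi_0=(\widetilde{\Cc},\psi_1\psi_0^{-1})+(Q_{(0,0)},\varphi_0^{-1})+(Q_{(1,1)},\varphi_1)$. Since $\omega$ is a 2-form on a surface, its pull-back to any of the 1-dimensional components of the formal sum vanishes, so the logarithmic contribution in the Deligne formula of Section 2 disappears and $\langle r((\xi_1)_t-(\xi_0)_t),[\omega_t]\rangle=\int_\Gamma\omega_t$ for some 2-chain with $\partial\Gamma=\gamma$. I would construct $\Gamma$ as the union of the two sheets of the 2:1 branched cover $\widetilde{\X}_t\to\P^1\times\P^1$ lying over $\triangle$, with orientations arranged so that the integrand pulls back with consistent sign: on each sheet $v=\pm\sqrt{g(y)/f(x)}$, so $\omega=dx\,dy/(vf(x))=\pm dx\,dy/\sqrt{f(x)g(y)}$, and the two contributions add to $\Lc$. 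The delicate part is the boundary analysis: over the branch segments $\{y=0\}$ and $\{x=1\}$ the two sheets glue through arcs on the exceptional divisors $Q_{(0,0)}$ and $Q_{(1,1)}$, while over the diagonal $\{y=x\}$ they meet $\widetilde{\Cc}$ in the preimages of $[\infty,0]$ under $\psi_1\psi_0^{-1}$. Matching this boundary combinatorics against the explicit zero/pole data of $\psi_\bullet,\varphi_\bullet$ recorded in Section 4 yields $\partial\Gamma=\gamma$. I expect this orientation and blow-up bookkeeping to be the main obstacle of the proof.

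For (3), I would verify (\ref{diffL}) directly for $\Lc$; part (2) together with Corollary \ref{basiccor} then transports the identity to $\nu_{\tr}(\xi_1-\xi_0)$. Writing
\[
\Lc(a,b)=2\int_0^1\frac{dy}{\sqrt{g(y)}}\int_y^1\frac{dx}{\sqrt{f(x)}}
\]
and applying $\Dc_1$ under the integral sign, the key classical input is that the complete Legendre period $\int_0^1 x^{-1/2}(1-x)^{-1/2}(1-ax)^{-1/2}dx$ is annihilated by $\Dc_1$. Griffiths-Dwork reduction modulo exact forms produces an explicit rational primitive $H(x,a)$ with $\Dc_1[F(x,a)]=\partial_x H(x,a)$ and $H(0,a)=H(1,a)=0$, whence $\Dc_1\int_y^1 F\,dx=-H(y,a)$. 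A direct partial-fraction computation of $-2\int_0^1 H(y,a)g(y)^{-1/2}dy$ then yields the first component $2(\sqrt{1-b}/\sqrt{1-a}-1)/(a-b)$ of the right-hand side of (\ref{diffL}); the second component follows by the symmetric argument with $(x,a)\leftrightarrow(y,b)$ after swapping the order of integration. The only real computational subtlety is writing $H$ explicitly; once in hand, the $y$-integration is elementary, and I would sanity-check the end result by verifying the symmetry of the inhomogeneous term under $(a,b)\leftrightarrow(b,a)$.
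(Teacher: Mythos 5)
Your plan for (2) and (3) is essentially the paper's own proof. For (2) the paper likewise takes the closures $K_{\pm}$ of the two sheets over $\triangle$ and shows that $K_+-K_-$ computes $\langle r((\xi_1)_t-(\xi_0)_t),[\omega_t]\rangle$; for (3) your Fubini-plus-fundamental-theorem computation with the primitive $H(a,x)=-\frac{\sqrt{x(1-x)}}{2\sqrt{1-ax}^3}$ (which is algebraic, not rational) is the same calculation the paper packages as Stokes' theorem on $K_+$, and it reduces to the same elementary integral $\int_0^1(1-bz)^{-1/2}(1-az)^{-3/2}dz$. One step of (2) needs repair, however: you assert that the boundary bookkeeping gives $\del\Gamma=\gamma$ on the nose, and it does not. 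Besides arcs on $\widetilde{\Cc}$, $Q_{(0,0)}$ and $Q_{(1,1)}$, the boundary of $K_+-K_-$ also contains a pair of arcs on $Q_{(1,0)}$ (the blow-up over the third corner $(1,0)$ of $\triangle$, a curve that does not occur in the cycle at all), and the arcs on $\widetilde{\Cc}$, $Q_{(0,0)}$, $Q_{(1,1)}$ need not coincide with the chains $f_j^{-1}([\infty,0])$; they only differ from them by $1$-cycles on those curves. Since all these curves are $\P^1$'s the discrepancies bound, and the correcting $2$-chains contribute nothing to $\int\omega$; the paper formalizes exactly this as a ``$2$-chain associated with $\xi$ in a weak sense'' (Proposition \ref{2chainprop}). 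You have the needed ingredient (you note that $\omega$ restricts to zero on curves), but as written the claim is too strong and the $Q_{(1,0)}$ pieces are missing.

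Your treatment of (1) is genuinely different from the paper's and is the shakiest part: isotoping $\triangle$ inside $\P^1\times\P^1$ away from $x=1/a$, $y=1/b$ ignores that one must simultaneously transport the branch of $\sqrt{f}\sqrt{g}$ (i.e.\ move the chain inside $\widetilde{\X}_t$, where the relevant corners are blown up) and keep its boundary on the allowed curves; a uniform $L^1$ bound alone does not produce the continuation. The paper instead deduces (1) from (2): once $[\Lc]=\nu_{\tr}(\xi_1-\xi_0)$ near $t_0$ (via Corollary \ref{basiccor}), the globally defined section $\nu_{\tr}(\xi_1-\xi_0)\in\Gamma(T,\Qc)$ together with local corrections by elements of $\Pc_\omega$ furnishes the analytic continuation of $\Lc$ along any chain of open sets; I recommend arguing in that order, which costs nothing once (2) is done. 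Finally, a caution on (3): the sanity check you propose, symmetry of the inhomogeneous term under $(a,b)\leftrightarrow(b,a)$, is not the correct one because $\triangle$ is not symmetric; the right consistency relation is $(\Dc_2\Lc)(a,b)=-(\Dc_1\Lc)(b,a)$, which follows from $\Lc(a,b)+\Lc(b,a)=2P_1(a)P_1(b)$ and $\Dc_i\bigl(P_1(a)P_1(b)\bigr)=0$. Carrying out your computation for the second operator gives $\Dc_2\Lc=\frac{2}{a-b}\bigl(\frac{\sqrt{1-a}}{\sqrt{1-b}}-1\bigr)$; compare this sign carefully with the second entry displayed in (\ref{diffL}) and with your orientation conventions (nothing downstream hinges on it, since the later rank argument uses only the first entries, but your write-up must be internally consistent).
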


Thus we have $\nu_{\tr}(\xi_1-\xi_0)\neq 0$ and this implies Theorem \ref{mainthm1} by Proposition  \ref{mainproperty}.
We will prove Proposition \ref{preprop1} and Proposition \ref{preprop5} in this section.

\subsection{The Picard-Fuchs differential operator}
For $c\in \C-\R_{\ge 0}$, we consider the following improper integrals.
\begin{equation*}
P_1(c)=\int_0^1\frac{dx}{\sqrt{x(1-x)(1-c x)}},\quad P_2(c)=\int_1^\infty\frac{dx}{\sqrt{x(1-x)(1-c x)}}
\end{equation*}
As is well-known (see, e.g., \cite{WW}, p.~253), these integrals converge and give linearly independent solutions of the hypergeometric differential equation
\begin{equation}\label{diffopL}
c(1-c)\frac{d^2P}{dc^2} +(1-2c)\frac{dP}{dc} -\frac{1}{4}P=0.
\end{equation}
For any $c\in S_0$, they can be analytically continued to a neighborhood of $c$ and we use the samge notation for the resulting multivalued functions.

Let $\theta$ be the relative 1-form $\frac{dx}{y}$ on the Legendre family of elliptic curves $\E\rightarrow S_0$. 
For $c\in \C-\R_{\ge 0}$, let $\gamma_+,\gamma_-$ (resp. $\delta_+,\delta_-$) be lifts of paths $[0,1]$ and $[1,\infty]$ on $\P^1$ by $\E_c  \rightarrow  \P^1$.
Let $\theta_c$ be the pull-back of $\theta$ at $\E_c$. Then we have 
\begin{equation*}
\int_{\gamma_+}\theta_c = -\int_{\gamma_-}\theta_c,\quad \int_{\delta_+}\theta_c = -\int_{\delta_-}\theta_c
\end{equation*}
and they coincide with $\pm P_1(c)$ and $\pm P_2(c)$, respectively.
Since $[\gamma_+]-[\gamma_-]$ and $[\delta_+]-[\delta_-]$ are generators of $H_1(\E_c,\Z)$ (see, e.g., \cite{CMP}, p.~10), $2P_1$ and $2P_2$ are local basis of the period functions of $\E\rightarrow S_0$ with respect to $\theta$.
In particular, hypergeometric differential equation (\ref{diffopL}) is a Picard-Fuchs differential equation of $\E\rightarrow S_0$.

Next, we will find a Picard-Fuchs differential operator of $\widetilde{\X}\rightarrow T$. Recall that we have $T$-morphisms
\begin{equation*}
\begin{tikzcd}
\A  &[30pt] \widetilde{\A} \arrow[l,"p"',"\text{blowing-up}"] \arrow[r,"\pi","2:1\text{ quotient}"'] &[30pt] \widetilde{\X}.
\end{tikzcd}
\end{equation*}
We name the morphisms $p$ and $\pi$ as above. 
We have the relative 2-form $pr_1^*(\theta)\wedge pr_2^*(\theta)$ on $\A\rightarrow T$ where $pr_i$ is the morphism $\A\rightarrow \A_0 \hookrightarrow \E\times \E \xrightarrow{pr_i} \E$.
Then its pull-back by $p$ is stable under the covering transformation of $\pi$, so it descends to $\widetilde{\X}$. We denote the resulting 2-form on $\widetilde{\X}$ by $\omega$.
The relative 2-form $\omega$ is described as $\frac{dx\wedge dy}{vf(x)}$ and $\frac{dx\wedge dy}{wg(y)}$ on the local charts $V$ and $W$.
Then we have the following.
\begin{prop}\label{perprop}
The four local holomorphic functions $2P_i(a)P_j(b) \:\: (i,j\in \{1,2\})$ are a local basis for the local system $\Pc_{\omega}\subset \O_{T}^\an$ generated by period functions of $\widetilde{\X}\rightarrow T$ with respect to $\omega$.
\end{prop}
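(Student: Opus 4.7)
The plan is to reduce the computation of periods on $\widetilde{\X}\to T$ to periods on the product family $\A\to T$, using the defining relation $\pi^*\omega = p^*(pr_1^*\theta\wedge pr_2^*\theta)$, and then split into two elliptic factors via the K\"unneth formula.

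First I would show that every period of $\omega$ on $\widetilde{\X}_t$ arises from an $\iota$-invariant $2$-cycle on $\A_t$. Given a $2$-cycle $\Gamma$ on $\widetilde{\X}_t$, its preimage $\pi^{-1}\Gamma$ is an $\iota$-invariant $2$-cycle on $\widetilde{\A}_t$, and since $\pi$ has degree $2$,
\[
\int_\Gamma \omega \;=\; \tfrac{1}{2}\int_{\pi^{-1}\Gamma}\pi^*\omega \;=\; \tfrac{1}{2}\int_{\pi^{-1}\Gamma} p^*(pr_1^*\theta\wedge pr_2^*\theta).
\]
Since $p$ is the blow-up of $\A$ along the finite set $\Sigma^2$ and $pr_1^*\theta\wedge pr_2^*\theta$ is regular on $\A$, the components of $\pi^{-1}\Gamma$ lying in the exceptional divisors of $p$ contribute nothing, reducing the integral to $\tfrac{1}{2}\int_{p_*\pi^{-1}\Gamma} pr_1^*\theta\wedge pr_2^*\theta$, which can be evaluated on $\A_t$.

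Next I would apply K\"unneth to $H_2(\A_t,\Z)=H_2(\E_a\times\E_b,\Z)$. The summands $H_0\otimes H_2$ and $H_2\otimes H_0$ pair to zero with the product form, since on such cycles one of the $pr_i^*\theta$ restricts to $0$; hence all nontrivial periods come from $H_1(\E_a,\Z)\otimes H_1(\E_b,\Z)$, whose classes are automatically $\iota$-invariant because $\iota$ acts as $-1$ on each $H_1$. Using the basis $\pm 2P_i(c)$ of periods of $\theta$ on $\E_c$ established just above the proposition, integration over the four product generators gives $(\pm 2P_i(a))(\pm 2P_j(b))=\pm 4P_i(a)P_j(b)$; combined with the factor $\tfrac{1}{2}$ coming from $\pi$, this produces exactly the periods $\pm 2P_i(a)P_j(b)$ on $\widetilde{\X}_t$, placing the four functions $2P_i(a)P_j(b)$ inside $\Pc_\omega$.

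Finally, for the basis property, linear independence is immediate: $P_1,P_2$ form a fundamental system of solutions to the hypergeometric ODE (\ref{diffopL}) in each variable, so the four products $P_i(a)P_j(b)$ are linearly independent as multivalued holomorphic functions on $T$. Conversely, the K\"unneth argument above shows that every period of $\omega$ is a $\Z$-linear combination of the $2P_i(a)P_j(b)$, so the local rank of $\Pc_\omega$ is at most $4$, and the four functions indeed form a local basis. The main obstacle is the careful bookkeeping of the factor $\tfrac{1}{2}$ from the degree-$2$ quotient together with the verification that cycles supported on the exceptional divisors of $p$ contribute nothing to the integration; once these are settled, the rest is a direct K\"unneth computation.
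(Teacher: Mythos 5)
Your argument for one inclusion is essentially the paper's: pulling a $2$-cycle $\Gamma\subset\widetilde{\X}_t$ back through the degree-$2$ map (i.e.\ the transfer), pushing down to $\A_t$, and applying K\"unneth shows that every period of $\omega$ lies in $\tfrac12\cdot\langle 4P_i(a)P_j(b)\rangle_{\Z}=\langle 2P_i(a)P_j(b)\rangle_{\Z}$; together with the linear independence of the four products this is fine. The genuine gap is in the opposite inclusion, where you claim that each $2P_i(a)P_j(b)$ actually \emph{is} a period of $\widetilde{\X}_t$ because the product classes in $H_1(\E_a,\Z)\otimes H_1(\E_b,\Z)$ are $\iota$-invariant and one may then ``combine with the factor $\tfrac12$ coming from $\pi$.'' That step does not follow. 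What you can produce directly from a product torus $\gamma_i\times\delta_j$ on $\A_t$ is its image cycle on $\widetilde{\X}_t$ (lift through the blow-up, push forward by $\pi$), and the chain-level projection formula gives that cycle the period $\int_{\gamma_i\times\delta_j}pr_1^*\theta\wedge pr_2^*\theta=\pm 4P_i(a)P_j(b)$, not half of it. To realize $2P_i(a)P_j(b)$ you need an \emph{integral} class $[\Gamma]\in H_2(\widetilde{\X}_t,\Z)$ whose image under the dual map $\phi^\vee=p_*\circ(\text{transfer})$ is the product class, and $\iota$-invariance of a class upstairs is necessary but not sufficient for this: for a double cover the image of the transfer (equivalently, of $\phi^\vee$) can be a proper finite-index subgroup of the invariant classes, and with $\Q$-coefficients your argument proves nothing more than $\langle 4P_iP_j\rangle_{\Z}\subseteq\Pc_\omega\subseteq\langle 2P_iP_j\rangle_{\Z}$, which does not determine the lattice and hence does not give the stated basis.

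The paper closes exactly this gap by a non-formal input special to Kummer surfaces: the morphism $\phi=\pi_!\circ p^*:H^2(\A_t,\Z)\rightarrow H^2(\widetilde{\X}_t,\Z)$ is injective with torsion-free cokernel (BHPV, Chapter VIII, Proposition 5.1 and Corollary 5.6), so the dual $\phi^\vee:H_2(\widetilde{\X}_t,\Z)\rightarrow H_2(\A_t,\Z)$ is surjective; applying this to the product generators produces integral cycles on $\widetilde{\X}_t$ with periods exactly $2P_i(a)P_j(b)$. Your proof needs either this citation or an explicit topological construction of such ``half'' classes on $\Km(\E_a\times\E_b)$ (which is delicate, as the naive images of invariant tori pass through the nodes and their proper transforms interact with the exceptional curves); as written, the divisibility by $2$ inside the integral period lattice is asserted rather than proved.
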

\begin{proof}
By the K\"unneth formula, we see that $4P_i(a)P_j(b) \:\: (i,j\in \{1,2\})$ is a local basis for the local system generated by period functions of $\A\rightarrow T$ with respect to $pr_1^*(\theta)\wedge pr_2^*(\theta)$.
For $t\in T$, let $\phi$ be the morphism of Hodge structures defined by 
\begin{equation*}
\begin{tikzcd}
\phi:&[-28pt] H^2(\A_t) \arrow[r,"p^*"] & H^2(\widetilde{\A}_t) \arrow[r,"\pi_!"] & H^2(\widetilde{\X}_t)
\end{tikzcd}
\end{equation*}
where $\pi_!$ is the Gysin morphism induced by $\pi$.
Since the mapping degree of $\pi$ is 2, $\pi_!\circ \pi^*: H^2(\widetilde{\X}_t)\rightarrow H^2(\widetilde{\X}_t)$ equals multiplication by 2 (cf. \cite{Voi02}, Remark 7.29).
Since $\pi^*(\omega) = p^*\left(pr_1^*(\theta)\wedge pr_2^*(\theta)\right)$, we have the relation 
\begin{equation}\label{2mulomega}
\phi([\left(pr_1^*(\theta)\wedge pr_2^*(\theta)\right)_t] = 2[\omega_t].
\end{equation}
Let $\phi^\vee :  H_2(\widetilde{\X}_t) \rightarrow H_2(\A_t)$ be the dual of $\phi$. For any $[\Gamma]\in  H_2(\widetilde{\X}_t,\Z)$ and $[\Gamma']\in H_2(\A_t,\Z)$ such that $\phi^\vee([\Gamma]) = [\Gamma']$, we have 
\begin{equation*}
\int_{\Gamma}\omega_t = \frac{1}{2}\int_{\Gamma'}\left(pr_1^*(\theta)\wedge pr_2^*(\theta)\right)_t
\end{equation*}
by (\ref{2mulomega}).
Since the right-hand side is a linear combination of $2P_i(a)P_j(b)$, we see that any period function of $\widetilde{\X}\rightarrow T$ with respect to $\omega$ is a linear combination of $2P_i(a)P_j(b)$.
Furthermore, since $\phi$ is injective and its cokernel has no torsion (cf. \cite{BPHV}, {\rm Chapter VIII, Proposition 5.1 and Corollary 5.6}), $\phi^\vee$ is surjective.
Thus $2P_i(a)P_j(b)$ itself is a period function for $i,j\in \{1,2\}$ and we have the result.
\end{proof}

Now we can prove Proposition \ref{preprop1}.

\begin{proof}[(Proof of Proposition \ref{preprop1})]
Let $f\in \Pc_\omega\subset \O_T^\an$ be any local section.
Then $f$ is a linear combination of $2P_i(a)P_j(b)$ by Proposition \ref{perprop}.
Since $P_i(c)$ are solutions of (\ref{diffopL}), both of the differential operators $\Dc_1$ and $\Dc_2$ annihilate $2P_i(a)P_j(b)$.
Hence we have the result.
\end{proof}

\subsection{Calculation of the regulator}
For a while, we fix $t_0\in T$ such that\footnote{Note that choosing a point on $T$ is equivalent to choosing a point $(a,b)\in T_0$ and branches of $\sqrt{a},\sqrt{b},\sqrt{1-a},\sqrt{1-b}\in \C$.} $\sqrt{1-a}$, $\sqrt{1-b}\in \R_{>0}$.
Our first goal is to compute the image of $(\xi_1)_t-(\xi_0)_t$ under the transcendental regulator map locally around $t_0$.
For the computation, we construct a 2-chain and use (\ref{transregval}).
In the calculation, we replace a 2-chain associated with $(\xi_1)_t-(\xi_0)_t$ with another 2-chain to make the computation easier.

Let $\xi$ be a higher Chow cycle on a $K3$ surface $X$ and $\gamma$ be the topological 1-cycle associated with $\xi$.
A topological 2-chain $\Gamma'$ is called a \textit{2-chain associated with $\xi$ in a weak sense} if there exists a finite family of curves $\{E_k\}_{k}$ on $X$ and a topological 2-chain $\Gamma_k$ on each $E_k$ such that 
\begin{equation}\label{weaksense}
\del \Gamma' = \gamma + \sum_{k}\del\Gamma_k.
\end{equation}
Let $\Gamma$ be a 2-chain associated with $\xi$. If $\Gamma'$ is a 2-chain associated with $\xi$ in a weak sense, $\Gamma$ and $\Gamma'+\sum_{k}\Gamma_k$ coincide up to topological 2-cycles. 
Since the pull-back of a holomorphic 2-form $\omega$ on $E_k$ vanishes, we have 
\begin{equation*}
\int_{\Gamma'}\omega \equiv \int_{\Gamma}\omega \underset{(\ref{transregval})}{\equiv} \langle r(\xi), [\omega]\rangle  \mod \Pc(\omega).
\end{equation*}
Hence we can also use $\Gamma'$ for the computation of the transcendental regulator map.

We will construct a desired 2-chain.
For $t$ which is sufficiently close to $t_0$, let $\triangle_+$ and $\triangle_-$ be the images of the following maps.
\begin{equation}\label{Deltamap}
\begin{tikzcd}[row sep = tiny]
\triangle = \{(x,y)\in \R^2: 0<y<x<1\} \arrow[r] &[-50pt]V &[-100pt] (\subset \widetilde{\X}_t) \\
(x,y) \arrow[r,mapsto] \aru &(x,y,v) = \left(x,y,\pm \frac{\sqrt{y(1-y)(1-by)}}{\sqrt{x(1-x)(1-ax)}}\right) \aru
\end{tikzcd}
\end{equation}
Note that since $t$ is sufficiently close to $t_0$, we may assume the function $\sqrt{1-ax}$ (resp. $\sqrt{1-by}$) do not ramify on $\triangle$ and we can fix the branch of it so that it takes a value $1$ and $\sqrt{1-a}$ at $x=0,1$ (resp. $1$ and $\sqrt{1-b}$ at $y=0,1$). 

We define $K_+$ and $K_-$ as the closures of $\triangle_+$ and $\triangle_-$, respectively. 
Let $\gamma$ be a path $[0,1]$ on $\P^1$ reparameterized so that $\gamma(s) = s^2$ (resp. $1-\gamma(s) = (1-s)^2$) on a neighborhood of $0$ (resp. $1$).
Then if we replace $x$ and $y$ in the target in (\ref{Deltamap}) by $\gamma(x)$ and $\gamma(y)$, respectively, (\ref{Deltamap}) can be extended to a map from a compact oriented manifold with corners, so $K_+$ and $K_-$ are $C^\infty$-chains. 

\begin{prop}\label{2chainprop}
For any $t$ which is sufficiently close to $t_0$, we have 
\begin{equation*}
\langle r((\xi_1)_t-(\xi_0)_t), [\omega_t]\rangle \equiv \int_{K_+}\omega_t -\int_{K_-}\omega_t \mod \Pc(\omega_t).
\end{equation*}
\end{prop}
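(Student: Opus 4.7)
The plan is to exhibit $K_+ - K_-$ as a 2-chain associated with $(\xi_1)_t - (\xi_0)_t$ in the weak sense, i.e., satisfying a relation of the form (\ref{weaksense}). Once that is done, the congruence proved in the discussion preceding the proposition yields the conclusion, since $\int_{K_+ - K_-}\omega_t = \int_{K_+}\omega_t - \int_{K_-}\omega_t$ and the correction 2-chains $\Gamma_k$ will all lie on rational curves, on which the holomorphic 2-form $\omega_t$ restricts to zero.

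Let $\gamma$ denote the 1-cycle on $\widetilde{\X}_t$ associated with $(\xi_1)_t - (\xi_0)_t$. From the presentations of $\xi_0,\xi_1$, $\gamma = (\gamma_{\psi_1} - \gamma_{\psi_0})|_{\widetilde{\Cc}} - \gamma_{\varphi_0}|_{Q_{(0,0)}} + \gamma_{\varphi_1}|_{Q_{(1,1)}}$, where each $\gamma_{(-)}$ is the preimage of $[\infty,0]\subset\P^1$ under the indicated rational function. Each of these arcs lies on the real axis of the local $v$-coordinate on the relevant $\P^1$ and passes through $v=\infty$, because $\psi_i$ (resp. $\varphi_i$) is real and positive on the real $v$-axis outside the closed interval bounded by its zero and pole.

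The main task is to compute $\partial(K_+ - K_-)$. After the reparameterization, $K_\pm$ are $C^\infty$-chains whose domains are obtained from $\triangle$ by blowing up each of its three corners into an arc; accordingly $\partial K_\pm$ decomposes into three edge-lifts together with three corner arcs on the exceptional divisors $Q_{(0,0)},Q_{(1,0)},Q_{(1,1)}$. On the two edges $y=0$ and $x=1$, both contained in the branch locus of $\widetilde{\X}_t\to\widetilde{\Y}_t$, the lifts for $K_+$ and $K_-$ coincide as singular 1-chains (namely $(x,0,v=0)$ and $(1,y,w=0)$ respectively), so these contributions cancel in $\partial(K_+ - K_-)$. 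On the diagonal edge $x=y$, which maps to $\widetilde{\Cc}$, the lifts produce opposite-sign arcs $v=\pm\sqrt{g(x)/f(x)}$ joining $p_1^\pm$ to $p_0^\pm$. At each corner, the resolving arc on the relevant $Q_\sigma$ is determined by taking limits of $v$ (or $w=1/v$) along rays approaching the corner from inside $\triangle$.

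A direct comparison then shows that on each of the rational curves $\widetilde{\Cc}, Q_{(0,0)}, Q_{(1,0)}, Q_{(1,1)}$, the restrictions of $\partial(K_+ - K_-)$ and of $\gamma$ (with $\gamma|_{Q_{(1,0)}}=0$) have the same boundary as 0-chains; hence their difference is a closed 1-cycle on the respective curve, which bounds a 2-chain $\Gamma_k$ there since each curve is a $\P^1$. This yields the required decomposition $\partial(K_+-K_-) = \gamma + \sum_k \partial\Gamma_k$. The delicate step is the orientation bookkeeping at the corners: for example at $(0,0)$ one must verify that the $K_+$-arc on $Q_{(0,0)}$ runs along the positive real $v$-axis from $v=1=p_0^+$ (the intersection of $\widetilde{\Cc}$ with $Q_{(0,0)}$) down to $v=0$ (end of the edge $y=0$), while $-\gamma_{\varphi_0}$ traces the complementary route through $v=\infty$ back to $v=-1$, so that together with the $K_-$-arc these form a closed loop on $Q_{(0,0)}\simeq\P^1$ which bounds; analogous verifications at $(1,0)$ and $(1,1)$ complete the argument.
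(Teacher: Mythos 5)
Your proposal takes essentially the same approach as the paper's proof: you exhibit $K_+-K_-$ as a 2-chain associated with $(\xi_1)_t-(\xi_0)_t$ in the weak sense by computing $\del(K_+-K_-)$, noting that the lifts of the edges lying in the branch locus cancel, and matching the remaining arcs on $\widetilde{\Cc}$, $Q_{(0,0)}$, $Q_{(1,0)}$, $Q_{(1,1)}$ against the associated 1-cycle up to closed 1-chains, which bound since these curves are rational and carry no holomorphic 2-form. This is precisely the argument of the paper (carried out there via the boundary description in Figure 5), so your proof is correct.
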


To prove this, we should examine the boundaries of $K_+$ and $K_-$.
Recall that we have the following morphisms.
\begin{equation*}
\begin{tikzcd}
\widetilde{\X}_t \arrow[r,"2:1\text{ quotient}"] &[30pt] \widetilde{\Y}_t \arrow[r,"\text{blowing-up}"] &[30pt] \Y_t=\P^1\times \P^1
\end{tikzcd}
\end{equation*}
We regard $\triangle=\{(x,y)\in \R^2:0<y<x<1\}$ as a subset of $\Y_t=\P^1\times \P^1$.
Let $K$ be the closure of the inverse image of $\triangle$ by $\widetilde{\Y}_t\rightarrow \Y_t$ (see Figure \ref{Kfigure1}).
We define paths $\gamma_c,\gamma_{11},\gamma_{y},\gamma_{10}, \gamma_x$ and $\gamma_{00}$ on the boundary $\del K$ as in Figure \ref{Kfigure1}.
\begin{figure}[h]
\includegraphics[width=10cm,pagebox=cropbox,clip]{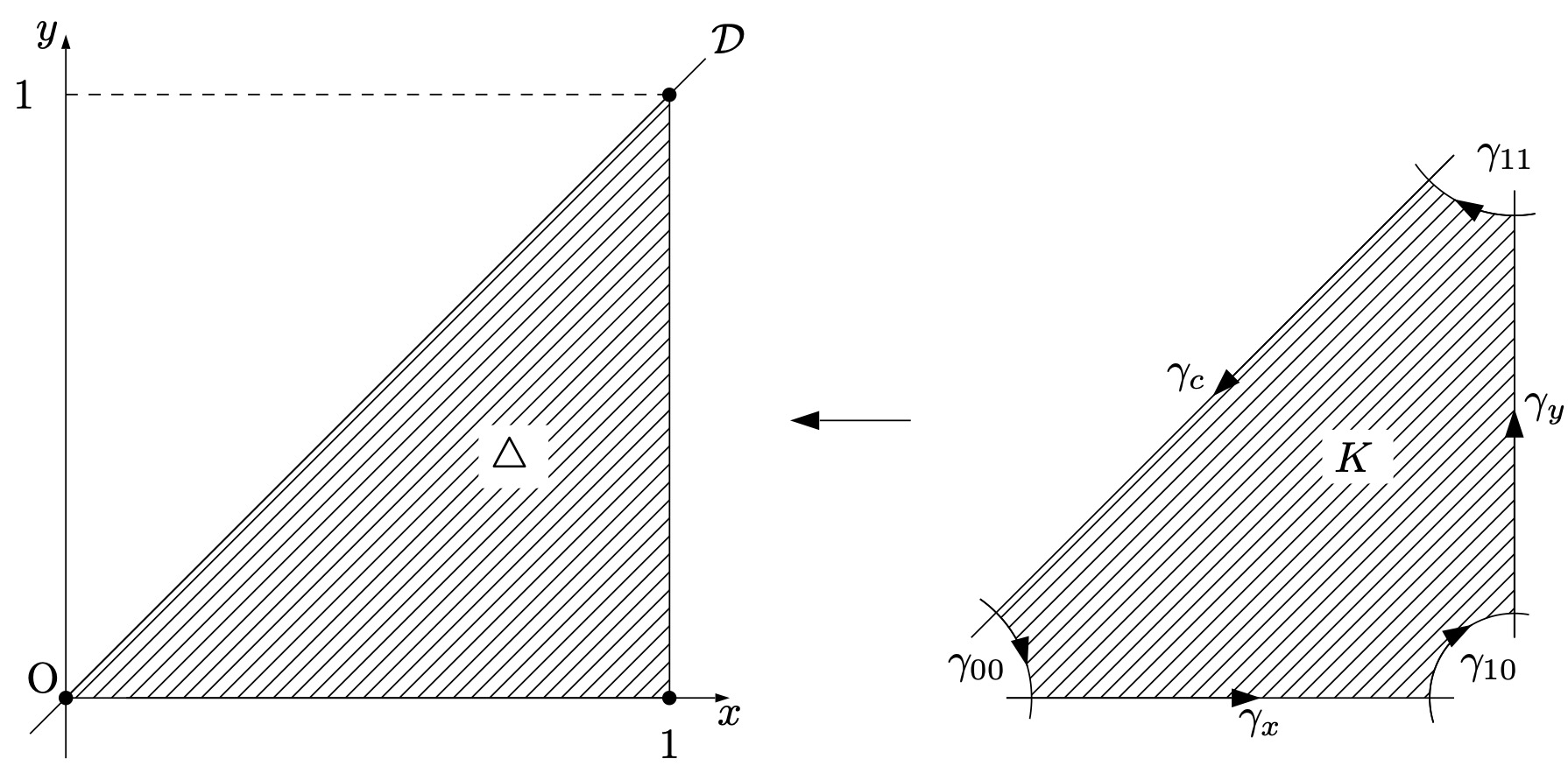}
\caption{The 2-chain $K$ and its boundary}
\label{Kfigure1}
\end{figure}

Then $\widetilde{\X}_t\rightarrow \widetilde{\Y}_t$ induces the homeomorphism $K_+\xrightarrow{\sim} K$ and $K_-\xrightarrow{\sim} K$ (see Figure \ref{Kfigure2}).
\begin{figure}[h]
\includegraphics[width=13cm,pagebox=cropbox,clip]{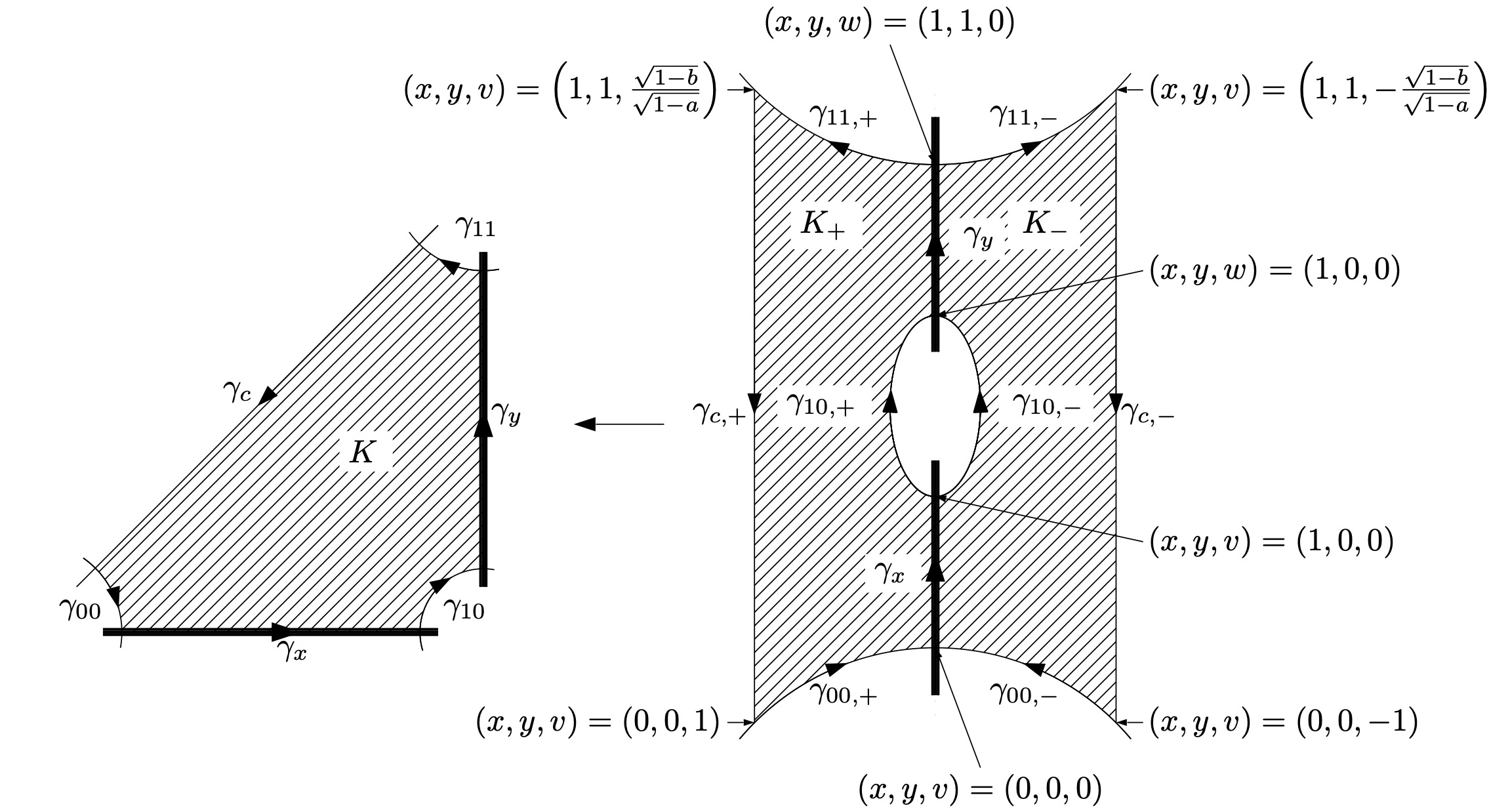}
\caption{The 2-chains $K_+$ and $K_-$ and their boundaries}
\label{Kfigure2}
\end{figure}
Here the bold line in the figure denotes the branching locus of $\widetilde{\X}_t\rightarrow \widetilde{\Y}_t$.
We define paths $\gamma_{c,\pm}, \gamma_{11,\pm}, \gamma_y,\gamma_{10,\pm},\gamma_x$ and $\gamma_{00,\pm}$ on $\del K_+$ and $\del K_-$ as in Figure \ref{Kfigure2}.
They satisfy the following properties.
\begin{enumerate}
\item The paths $\gamma_{c,+}$ and $\gamma_{c,-}$ are the lifts of $\gamma_c$.
Since $\gamma_c$ is on the strict transformation of $\D$, they are on the curve $\widetilde{\Cc}$.
\item Furthermore, since $t$ is close to $t_0$, $\gamma_{c,+}$ (resp. $\gamma_{c,-}$) is a path from \\
$(1,1,\sqrt{1-b}/\sqrt{1-a})$ to $(0,0,1)$ (resp. $(1,1,-\sqrt{1-b}/\sqrt{1-a})$ to $(0,0,-1)$).
\item The paths $\gamma_{00,+},\gamma_{10,+}$ and $\gamma_{11,+}$ (resp. $\gamma_{00,-},\gamma_{10,-}$ and $\gamma_{11,-}$) are the lifts of $\gamma_{00},\gamma_{10}$ and $\gamma_{11}$ and they are on the exceptional curves $Q_{(0,0)}, Q_{(1,0)}$ and $Q_{(1,1)}$, respectively.
\item Since $\gamma_x$ and $\gamma_y$ on $\del K$ is contained in the branching locus of $\widetilde{\X}_t\rightarrow \widetilde{\Y}_t$, there exist the unique lifts of them and their lifts are contained in $\del K_+\cap \del K_-$.
\end{enumerate}

Then we can prove Proposition \ref{2chainprop}.

\begin{proof}[(Proof of Proposition \ref{2chainprop})]
It is enough to show that $K_+-K_-$ is a 2-chain associated with $(\xi_1)_t-(\xi_0)_t$ in a weak sense.
Note that $(\xi_1)_t-(\xi_0)_t$ is represented by the formal sum
\begin{equation*}
\left(\widetilde{\Cc}, \psi_0^{-1}\psi_1\right) + \left(Q_{(0,0)}, \varphi_0^{-1}\right) + \left(Q_{(1,1)},\varphi_1\right).
\end{equation*}
Let $\widetilde{\gamma}_c$ (resp. $\widetilde{\gamma}_{00}$, $\widetilde{\gamma}_{11}$) be the 1-chain on $\widetilde{\Cc}$ (resp. $Q_{(0,0)}$, $Q_{(1,1)}$)  defined by the pull-back of $[\infty,0]$ on $\P^1$ by the rational function $\psi_0^{-1}\psi_1$ (resp. $\varphi_0^{-1}$, $\varphi_1$). Then $\widetilde{\gamma}_c+\widetilde{\gamma}_{00}+\widetilde{\gamma}_{11}$ is the 1-cycle associated with $\xi$. By Figure \ref{Kfigure2}, we have
\begin{equation*}
\del (K_+ -K_-) = (\gamma_{c,+}-\gamma_{c,-}) + (\gamma_{00,+}-\gamma_{00,-}) +  (\gamma_{10,+} -\gamma_{10,-}) + (\gamma_{11,+}-\gamma_{11,-}).
\end{equation*}
Hence we should show that the 1-chains $(\gamma_{c,+}-\gamma_{c,-}-\widetilde{\gamma}_c)$, $(\gamma_{00,+}-\gamma_{00,-}-\widetilde{\gamma}_{00})$, $(\gamma_{10,+} -\gamma_{10,-})$ and $(\gamma_{11,+}-\gamma_{11,-}-\widetilde{\gamma}_{11})$ are 1-boundaries on the curves $\widetilde{\Cc},Q_{(0,0)},Q_{(1,0)}$ and $Q_{(1,1)}$ respectively.
Since $\widetilde{\Cc},Q_{(0,0)},Q_{(1,0)}$ and $Q_{(1,1)}$ are isomorphic to $\P^1$ and $H_1(\P^1)=0$, it is enough to show that they are 1-cycles. 
By Figure \ref{Kfigure2}, we have the following relations.
\begin{equation*}
\begin{aligned}
&\del(\gamma_{c,+}-\gamma_{c,-}) =\div_{\widetilde{\Cc}}(\psi_0^{-1}\psi_1) = \del\widetilde{\gamma}_c,&&\del(\gamma_{00,+}-\gamma_{00,-}) = \div_{Q_{(0,0)}}(\varphi_0^{-1}) = \del\widetilde{\gamma}_{00}\\
&\del(\gamma_{11,+}-\gamma_{11,-}) = \div_{Q_{(1,1)}}(\varphi_1) = \del\widetilde{\gamma}_{11},&& \del(\gamma_{10,+} -\gamma_{10,-})=0
\end{aligned}
\end{equation*}
Hence they are 1-cycles and we confirm that $K_+-K_-$ is a 2-chain associated with $(\xi_1)_t-(\xi_0)_t$ in a weak sense.
\end{proof}

Let $\Lc$ be the local holomorphic function
\begin{equation}\label{localLdef}
\Lc(t) = \int_{K_+}\omega_t-\int_{K_-}\omega_t
\end{equation}
which is defined around $t_0$.
Using the local description of $\omega_t$, we see that 
\begin{equation*}
\int_{K_+}\omega_t=-\int_{K_-}\omega_t = \int_{\triangle}\frac{dxdy}{\sqrt{x(1-x)(1-ax)}\sqrt{y(1-y)(1-by)}}.
\end{equation*}
Hence $\Lc(t)$ is a lift of $\Lc(a,b)$ in (\ref{functionL}).
Then we can prove Proposition \ref{preprop5}.

\begin{proof}[(Proof of Proposition \ref{preprop5})]
We will prove (1) and (2) simultaneously.
Let $\{U_i\}_i$ be a good open cover of $T$ such that for each $i$, there exists a local holomorphic function $\varphi_i$ on $U_i$ which represents $\nu_{\tr}(\xi_1-\xi_0)|_{U_i}$.
To prove (1) and (2), it is enough to show that $\Lc$ have an analytic continuation on each $U_i$ and the resulting function coincides with $\varphi_i$ up to an elements in $\Pc_\omega(U_i)$.

By Proposition \ref{2chainprop}, the equation (\ref{transregK}) holds for any point $t$ of an open neighborhood $U_0$ of $t_0$.
Since the left-hand side of (\ref{transregK}) corresponds to $\langle\ev_t(\nu_{\tr}(\xi_1-\xi_0)),[\omega_t]\rangle$, we see that $[\Lc]$ corresponds to $\nu_{\tr}(\xi_1-\xi_0)|_{U_0}$ by Corollary \ref{basiccor}.
Thus if $U_i\cap U_0\neq \emptyset$, we have $[\varphi_i]|_{U_i\cap U_0} = [\Lc]|_{U_i\cap U_0}$,
so $\Lc-\varphi_i \in \Pc_\omega(U_i\cap U_0)$.
Let $p_i$ be the restriction of $\Lc-\varphi_i$ on $U_i\cap U_0$.
Then we can extend $p_i$ on $U_i$.
The function $\varphi_i + p_i$ is a desired analytic continuation of $\Lc$ on $U_i$.
For a general $U_i$, take a finite family $U_{i_1},\dots, U_{i_n}=U_i$ such that $U_0\cap U_{i_1}\neq \emptyset$ and $U_{i_{k-1}}\cap U_{i_{k}}\neq \emptyset$ for $k = 2,\dots,n$ and repeat the above process.

Next, we will prove (3). 
We will compute $\Dc_1(\Lc)$.
It is enough to calculate it on a neighborhood of $t_0$. 
Hence we may assume $\Lc$ is given by (\ref{localLdef}).

Let $H(a,x)$ be a local holomorphic function defined by $H(a,x) =-\frac{\sqrt{x(1-x)}}{2\sqrt{1-ax}^3}$.
We can check that 
\begin{equation*}
\Dc_1 \left(\frac{1}{\sqrt{x(1-x)(1-ax)}}\right) = \frac{\del H(a,x)}{\del x}.
\end{equation*}
Then by Stokes' theorem\footnote{To be more precise, we should take a reparametrization of $K_+$ using $\gamma$ before Proposition \ref{2chainprop}.}, we have
\begin{equation*}
\begin{aligned}
&\Dc_1\left(\frac{1}{2}\Lc\right) = \Dc_1\left( \int_{K_+}\frac{dxdy}{\sqrt{x(1-x)(1-ax)}\sqrt{y(1-y)(1-by)}}\right) \\
&= \int_{K_+} d\left(\frac{H(a,x)dy}{\sqrt{y(1-y)(1-by)}}\right) = \int_{\del K_+} \frac{H(a,x)dy}{\sqrt{y(1-y)(1-by)}},
\end{aligned}
\end{equation*}
and since the 1-form $\frac{H(a,x)dy}{\sqrt{y(1-y)(1-by)}}$ vanishes on $\del K_+$ except $\gamma_{c,+}$, we have
\begin{equation*}
=\frac{1}{2}\int_0^1 \frac{dz}{(1-bz)^{\frac{1}{2}}(1-az)^{\frac{3}{2}}} =\frac{1}{a-b} \int_1^{\frac{\sqrt{1-b}}{\sqrt{1-a}}} du = \frac{1}{a-b}\cdot\left(\frac{\sqrt{1-b}}{\sqrt{1-a}}-1\right).
\end{equation*}
Here we use the coordinate transform $u= \frac{\sqrt{1-bz}}{\sqrt{1-az}}$. 
We can compute $\Dc_2(\Lc)$ similarly.
\end{proof}

\part{The group action}
\section{The group action on the Kummer family}
In this section, we construct a group action on $\widetilde{\X}\rightarrow T$. 
To produce many higher Chow cycles, it is not enough to consider only automorphisms which are trivial on $T$.
We will consider automorphisms of the following type.
\begin{defn}\label{autoasfamily}
Let $X\rightarrow S$ be a family of algebraic varieties. The {\it automorphism group $\Aut(X\rightarrow S)$ of $X\rightarrow S$} consists of a pair $(g,h)$ with $g\in \Aut(X)$ and $h\in \Aut(S)$ such that the following diagram commutes.
\begin{equation*}
\begin{tikzcd}
X \arrow[r,"g"]\arrow[d] & X \arrow[d] \\
S \arrow[r,"h"]          & S
\end{tikzcd}
\end{equation*}
We say that a group $G$ acts on a family of varieties $X\rightarrow S$ if we have a group homomorphism $G\rightarrow \Aut(X\rightarrow S)$.
\end{defn}
The main result of this section is as follows.
\begin{prop}\label{mainautoprop}
There exists a $G_{\X}$-action on $\widetilde{\X}\rightarrow T$ where $G_{\X}$ is a $\Z/2$-extension of $(\mathfrak{S}_4\times_{\mathfrak{S}_3} \mathfrak{S}_4)^2$.
\end{prop}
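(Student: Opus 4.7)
The plan is to build $G_{\X}$ in stages, working up through the chain of varieties in the paper's table of notations. The starting point is the classical \emph{anharmonic} $\mathfrak{S}_3$-action on $\P^1$ permuting $\{0,1,\infty\}$ by M\"obius transformations. Applied componentwise on $S_0\times S_0$, it preserves $T_0$ (whose defining conditions $a\neq \sigma(b)$ for $\sigma\in\mathfrak{S}_3$ are $\mathfrak{S}_3$-invariant) and thus yields $G_{T_0}\simeq \mathfrak{S}_3\times \mathfrak{S}_3$. On each $\P^1$-factor of $\Y_0=\P^1\times \P^1\times T_0$, the full symmetric group $\mathfrak{S}_4$ acts by permuting the four sections $\Sigma=\{0,1,1/c,\infty\}$, and the standard surjection $\mathfrak{S}_4\twoheadrightarrow \mathfrak{S}_4/K_4\simeq \mathfrak{S}_3$ (with $K_4$ the Klein four subgroup of double transpositions, which fixes the cross-ratio that determines $c$) recovers the anharmonic action on $S_0$. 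This gives $G_{\Y_0}\simeq \mathfrak{S}_4\times \mathfrak{S}_4$ acting on $\Y_0$, extending $G_{T_0}$.

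Next, the cover $T\to T_0$ adjoins $\sqrt{a},\sqrt{b},\sqrt{1-a},\sqrt{1-b}$ and has Galois group $(\Z/2)^4$. I would verify that the $G_{T_0}$-action lifts to a $G_T\simeq \mathfrak{S}_4\times \mathfrak{S}_4$-action on $T$: the additional $K_4\times K_4$ comes from sign flips of the four square roots, and the compatibility is checked by writing down, for each anharmonic generator such as $c\mapsto 1-c$ or $c\mapsto 1/c$, how it permutes $\pm\sqrt{c},\pm\sqrt{1-c}$. The actions of $G_{\Y_0}$ on $\Y_0$ and of $G_T$ on $T$ combine to an action on $\Y=\Y_0\times_{T_0}T$ exactly when they induce the same automorphism of $T_0$, and this compatibility is the fiber product condition yielding $G_{\Y}\simeq (\mathfrak{S}_4\times_{\mathfrak{S}_3}\mathfrak{S}_4)^2$, with one factor handling the $a$-side and the other the $b$-side.

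To pass from $G_{\Y}$ to $G_{\X}$, I work in the local chart $V=\Spec B[x,y,v]/(v^2f(x)-g(y))$. An element of $G_{\Y}$ acts on $B[x,y]$ and I must lift this to an action on $v$, which amounts to choosing, for each group element, a square root of the ratio relating the two sides of the defining equation. Since these ratios involve the square roots already in $B$, a direct computation shows that the obstruction is a 2-cocycle with values in $\mu_2=\{\pm 1\}$ (the deck transformation group of $\X\to\Y$), and moreover that this cocycle becomes trivial when the coefficients are enlarged to $\mu_4$ via $\mu_2\hookrightarrow \mu_4$. This produces the central extension $G_{\X}\simeq G_{\Y}\times_{\mu_2}\mu_4$. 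Because $\Sigma^2\subset \X$ is preserved setwise by this action, the universal property of the blow-up gives a unique extension of $G_{\X}$ to $\widetilde{\X}$, completing the construction.

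The main obstacle is the last step: correctly identifying the 2-cocycle that measures the obstruction to lifting $G_{\Y}$ to $\X$ and verifying that its class in $H^2(G_{\Y},\mu_2)$ is precisely the one trivialized by $\mu_2\hookrightarrow \mu_4$. This requires careful local bookkeeping on both charts $V$ and $W$, tracking how signs of $v=1/w$ propagate under compositions of generators; the essential phenomenon is that certain generators of $G_{\Y}$ can only be lifted to $\X$ if one is allowed to multiply $v$ by a primitive fourth root of unity, which forces the extension by $\mu_4$ rather than a split $\mu_2\times \mu_2$.
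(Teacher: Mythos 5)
Your plan follows the paper's proof step for step: the $\mathfrak{S}_4$-action on $\P^1\times S_0$ covering the anharmonic $\mathfrak{S}_3$-action on $S_0$ (the paper realizes it via the cross-ratio/moduli description, you via explicit M\"obius transformations and the observation that the double transpositions fix $c$); the lift of $G_{T_0}$ to a $\mathfrak{S}_4\times\mathfrak{S}_4$-action on $T$; the fiber product giving $G_{\Y}\simeq(\mathfrak{S}_4\times_{\mathfrak{S}_3}\mathfrak{S}_4)^2$ acting on $\Y=\Y_0\times_{T_0}T$; the $\mu_4$-enlargement $G_{\X}=G_{\Y}\times_{\mu_2}\mu_4$ to lift to the double cover $\X$; and the blow-up step via stability of $\Sigma^2$. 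The only real difference is packaging in Step 3: the paper works globally with the line bundle $\L$, its natural $G_{\Y}$-linearization, the $1$-cocycle $\eta$ with $\eta(\rho)=\eta_1(\underline{\rho}_1)\eta_2(\underline{\rho}_2)$, and the explicit coboundaries $\phi_1,\phi_2$ satisfying $\eta_i(\underline{\rho}_i)=\sgn(\underline{\rho}_i)\phi_i(\tau)^2$, then applies the cyclic-cover lifting lemma with $\chi(\rho)=\zeta\phi_1(\tau)\phi_2(\tau)$; you phrase the same computation as a $\mu_2$-valued obstruction $2$-cocycle coming from sign choices for $v$ on the charts.

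Two caveats on your Step 3. First, the decisive verification --- that the obstruction class is exactly the Bockstein of the character $\rho\mapsto\sgn(\underline{\rho}_1)\sgn(\underline{\rho}_2)$, equivalently the identity $\eta_i(\underline{\rho}_i)=\sgn(\underline{\rho}_i)\phi_i(\tau)^2$ with an explicitly chosen coboundary $\phi_i$ --- is asserted (``a direct computation shows'') but not carried out; this is precisely where the paper's Table of the $\mathfrak{S}(\Sigma)$-action and Proposition on $\eta$ do the work, and it is also needed later since the explicit cocycle $\chi$ drives all of Part 2. Note also that ``the class dies after $\mu_2\hookrightarrow\mu_4$'' by itself only tells you the group of lifts is $G_{\Y}\times_{\mu_2,\psi}\mu_4$ for \emph{some} character $\psi$; that suffices for the proposition as stated (any $\Z/2$-extension), but pinning down $\psi=\sgn\cdot\sgn$ again requires the explicit computation. (Indeed, for the bare existence statement one can shortcut: since $\sqrt{-1}$ and the relevant square roots lie in $B^\times$, every element of $G_{\Y}$ lifts to $\X$, and the group of all lifts is automatically a central $\mu_2$-extension of $G_{\Y}$ acting on $\X$.) Second, the charts $V$ and $W$ only cover the preimage of $U\subset\Y$, so a purely chart-level definition of the lifted automorphisms must also treat the charts at $x=\infty$ or $y=\infty$ and glue; the paper's formulation via the relative spectrum of $\O_{\Y}\oplus\L$ avoids this bookkeeping. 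Neither point is a wrong turn --- your outline is correct and matches the paper --- but both are places where the argument still has to be executed rather than cited.
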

We use the following conventions for group actions.
\begin{enumerate}
\item In this paper, we consider \textit{left} group actions unless specified otherwise.
\item Suppose a group $G$ acts on a set $M$. The group $G$ \textit{stabilizes} a subset $M'$ of $M$ if we have $g\cdot m\in M'$ for any $g\in G$ and $m\in M'$.
\item For a $\C$-scheme $X$, $\Aut(X)$ denotes the $\C$-automorphism group of $X$.
For $\rho\in \Aut(X)$, $\rho^\sharp$ denotes the morphism of sheaves $\O_X\rightarrow \rho_*\O_X$.
\item For $n\in \Z_{>1}$, $\mu_n$ denotes the group of $n$-th roots of unity.
\item For $n\in \Z_{\ge 1}$, $\mathfrak{S}_n$ denotes the symmetric group of degree $n$.
For a set $M$, $\mathfrak{S}(M)$ denotes the symmetric group of $M$.
The sign character is denoted by $\sgn: \mathfrak{S}(M)\rightarrow \mu_2$.
\end{enumerate}
Before the construction of the $G_{\X}$-action on $\widetilde{\X}\rightarrow T$, we recall some formal properties of finite group actions on schemes in Section 6.1. 
\subsection{Generalities}

\textit{A scheme with a group action} $(S,H,\varphi)$ is a triplet consisting of a $\C$-scheme $S$, a group $H$ and a group homomorphism $\varphi: H\rightarrow \Aut(S)$.
We usually omit $\varphi$ from the notation and write $(S,H)$. 
A morphism $(X,G)\rightarrow (S,H)$ between two such objects is a pair of a morphism $X\rightarrow S$ of varieties and a group homomorphism $G\rightarrow H$ satisfying the usual compatibility condition.

In Section 6.2, we use the following fiber product construction.
Suppose we have two morphisms $(\pi,\psi): (X,G)\rightarrow (S,H)$ and $(f,\varphi):(S',H')\rightarrow (S,H)$. 
Recall that the fiber product of $G$ and $H'$ over $H$ is defined by 
\begin{equation*}
G\times_{H} H' = \{(g,h)\in G\times H': \psi(g) = \varphi(h)\}.
\end{equation*}
Then we define the fiber product of $(X,G)$ and $(S',H')$ over $(S,H)$ by 
\begin{equation}\label{fiberprodprop}
(X\times_{S} S', G\times_{H}H').
\end{equation}
This is indeed the fiber product in the category of schemes with group actions.

Let $(X,G)$ be a scheme with a group action. 
Then we have the natural right $G$-action on $\Gamma(X,\O_X^\times)$.
A \textit{1-cocycle} on $\Gamma(X,\O_X^\times)$ is a map $\chi: G\rightarrow \Gamma(X,\O_X^\times)$ satisfying
\begin{equation*}
\chi(gh) = h^\sharp(\chi(g))\cdot \chi(h)
\end{equation*}
for any $g,h\in G$. Here $h^\sharp: \O_X\rightarrow h_*\O_{X}$ is the natural morphism induced by $h:X\rightarrow X$.

Let $\L$ be a $G$-linearized line bundle on $X$. The $G$-linearization is the same as a collection 
$(\Phi_{g}: g^*\L\xrightarrow{\sim} \L)_{g\in G}$ of isomorphisms satisfying the relation $\Phi_{h}\circ h^*(\Phi_g) = \Phi_{gh}$ for $g,h\in G$.
We can construct a 1-cocycle as follows. 
Let $s$ be a global section of $\L$ such that $\div(s)$ is stable under the $G$-action.
Then there exists the unique 1-cocycle $\chi:G\rightarrow \Gamma(X,\O_X^\times)$ such that  
\begin{equation*}
\Phi_g(g^*(s)) = \chi(g)^{-1}\cdot s\quad (g\in G).
\end{equation*}

Finally, we prove liftability of a group action by a cyclic covers.
For a line bundle $\L$ and a section $s\in \Gamma(X,\L^{\otimes (-m)})$, the cyclic covering $Y\rightarrow X$ associated with $(\L,s)$ is defined as the relative spectrum 
of the $\O_X$-algebra $\bigoplus_{i=0}^{m-1}\L^{\otimes i}$
(cf. \cite{BPHV} p.~54).
The branching locus of $Y\rightarrow X$ coincides with $\div(s)$.
\begin{prop}\label{liftprop}
Let $\pi:Y\rightarrow X$ be the degree $m$ cyclic covering associated with  $(\L,s)$.
Suppose a group $G$ acts on $X$ and a $G$-linearization $(\Phi_g)_{g\in G}$ of $\L$ are given.
Assume that there exists a 1-cocycle $\chi:G\rightarrow \Gamma(X,\O_X^\times)$ such that 
\begin{equation*}
\Phi_g^{\otimes (-m)}(g^*(s)) = \chi(g)^{-m}\cdot s
\end{equation*}
for any $g\in G$.
Then there exists a $G$-action on $Y$ such that $\pi$ is $G$-equivariant.
\end{prop}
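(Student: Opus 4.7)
The plan is to construct, for each $g \in G$, an isomorphism of $\O_X$-algebras $\tilde{g}^\sharp : g^*\A \to \A$ lifting $g^\sharp : \O_X \to g_*\O_X$, where $\A = \bigoplus_{i=0}^{m-1} \L^{\otimes i}$ is the $\O_X$-algebra with $Y = \Specu_X(\A)$. Once these are constructed and shown to satisfy the appropriate cocycle condition, they assemble into a $G$-action on $Y$ for which $\pi$ is equivariant. The only nonobvious part of the multiplication on $\A$ is that for $i + j = m + k$ (with $0 \le k < m$), the product $\L^{\otimes i} \otimes \L^{\otimes j} \to \L^{\otimes k}$ is obtained from the canonical decomposition $\L^{\otimes(i+j)} = \L^{\otimes m} \otimes \L^{\otimes k}$ followed by $s \otimes \id$, where $s \in \Gamma(X, \L^{\otimes(-m)})$ is viewed as a morphism $s : \L^{\otimes m} \to \O_X$; for $i + j < m$ the multiplication is simply the tensor product.

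The natural candidate, using the $G$-linearization, is to define $\tilde{g}^\sharp|_{g^*\L^{\otimes i}} := \chi(g)^{-i} \cdot \Phi_g^{\otimes i}$ on each graded piece. The twist by $\chi(g)^{-i}$ is precisely what is needed to absorb the discrepancy between $g^*s$ and $s$ provided by the hypothesis. Indeed, the hypothesis $\Phi_g^{\otimes(-m)}(g^*s) = \chi(g)^{-m} \cdot s$ is equivalent, after dualizing, to the identity $s \circ \Phi_g^{\otimes m} = \chi(g)^m \cdot g^*s$ of morphisms $g^*\L^{\otimes m} \to \O_X$. For $i + j < m$, multiplicativity of $\tilde{g}^\sharp$ reduces to $\chi(g)^{-i} \cdot \chi(g)^{-j} = \chi(g)^{-(i+j)}$. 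For $i + j = m + k$, pulling the multiplication back by $g$ and comparing the two routes through the commutative square, the factor $\chi(g)^m$ from the above identity combines with the twists $\chi(g)^{-(i+j)}$ on the source and $\chi(g)^{-k}$ on the target; since $(i+j) - k = m$, these cancel exactly and $\tilde{g}^\sharp$ is an algebra map.

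Finally, the cocycle condition $\tilde{h}^\sharp \circ h^*(\tilde{g}^\sharp) = \widetilde{(gh)}^\sharp$, restricted to $g^*h^*\L^{\otimes i}$, takes the form
\[
\chi(h)^{-i} \cdot h^\sharp(\chi(g))^{-i} \cdot \Phi_h^{\otimes i} \circ h^*\Phi_g^{\otimes i} \;=\; \chi(gh)^{-i} \cdot \Phi_{gh}^{\otimes i},
\]
which follows by combining the $G$-linearization relation $\Phi_h \circ h^*\Phi_g = \Phi_{gh}$ on the $\Phi$-factor with the $1$-cocycle relation $\chi(gh) = h^\sharp(\chi(g)) \cdot \chi(h)$ (raised to the $-i$-th power) on the scalar factor. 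Hence the $\tilde{g}$'s patch together into a $G$-action on $\A$, and so on $Y = \Specu_X(\A)$, for which $\pi$ is $G$-equivariant.

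The main obstacle is really the computation in the case $i+j \ge m$: without the twist by $\chi(g)^{-i}$ the naive $\Phi_g^{\otimes i}$ would fail to respect the multiplication through $s$ by exactly the scalar $\chi(g)^{-m}$ appearing in the hypothesis, so the whole argument is calibrated to cancel this mismatch. The remaining work is essentially bookkeeping, but one has to track the dualization $\Phi_g^{\otimes(-m)} \leftrightarrow \Phi_g^{\otimes m}$ and the direction in which both $\chi$ and the $\Phi_g$ compose when passing from $g, h$ to $gh$.
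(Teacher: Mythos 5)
Your proposal is correct and is essentially the paper's own argument, written out directly on the $\O_X$-algebra $\bigoplus_{i=0}^{m-1}\L^{\otimes i}$: the map $\chi(g)^{-i}\cdot\Phi_g^{\otimes i}$ on the $i$-th graded piece is exactly what the paper's composition of its three covering isomorphisms (base change along $g$, the isomorphism induced by $\Phi_g$, and multiplication by $\chi(g)^{-1}$) unwinds to, and your multiplicativity check in the case $i+j\ge m$ together with the cocycle verification supplies precisely the details the paper asserts without writing out. The only cosmetic slip is the domain in your cocycle identity, which should be $h^*g^*\L^{\otimes i}=(gh)^*\L^{\otimes i}$, consistent with the convention $\Phi_h\circ h^*(\Phi_g)=\Phi_{gh}$.
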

\begin{proof}
For $g\in G$, we define an automorphism $\tilde{g}: Y\rightarrow Y$ as follows. 
\begin{enumerate}
\item Let $Y_1$ be the cyclic covering associated with $(g^*\L, g^*(s))$. Then $Y_1$ is the fiber product of $Y\rightarrow X$ and $X\xrightarrow{\:g\:} X$. Since $g$ is an isomorphism, $Y_1\rightarrow Y$ is an isomorphism as well.
\item Let $Y_2$ be the cyclic covering associated with $(\L,\chi(g)^{-m}\cdot s)$. Then we have the isomorphism $Y_2\simeq Y_1$ over $X$ induced by $\Phi_g$.
\item We have the $\O_X$-module automorphism on $\L$ defined by the multiplication of $\chi(g)^{-1}$.
This automorphism induces an isomorphism $Y\simeq Y_2$ over $X$.
\end{enumerate}
Composing these isomorphisms, we get an automorphism $\tilde{g}\in \Aut(Y)$.
\begin{equation}
\begin{tikzcd}
Y \arrow[r,"\mathrm{(3)}","\sim"'] \arrow[d,"\pi"]& Y_2 \arrow[r,"\mathrm{(2)}","\sim"']\arrow[d,"\pi"] & Y_1 \arrow[d,"\pi"] \arrow[r,"\mathrm{(1)}","\sim"'] &Y \arrow[d,"\pi"] \\
X \arrow[r,equal]&X\arrow[r,equal]& X \arrow[r,"g","\sim"']& X
\end{tikzcd}
\end{equation}
We can show that $G\rightarrow \Aut(Y);g\mapsto \tilde{g}$ is a group homomorphism by the condition on $(\Phi_{g})_{g\in G}$ and the property of the 1-cocycles.
Hence we can construct a $G$-action on $Y$ and $\pi$ is $G$-equivariant by the construction.
\end{proof}

\subsection{Construction of the groups and their actions}
In this section, we will construct a group action on $\widetilde{\X}\rightarrow T$ by 4 steps. 
\begin{enumerate}
\renewcommand{\labelenumi}{\theenumi. }
\item We construct a $\mathfrak{S}(\Sigma)$-action on $\P^1\times S_0\rightarrow S_0$.
By taking its direct product, we have a $G_{\Y_0}(=\mathfrak{S}(\Sigma)^2)$-action on $\Y_0\rightarrow T_0$ which stabilizes $\Sigma^2$.
\item By taking the fiber product of the $G_{\Y_0}$-action on $\Y_0\rightarrow T_0$ and a $G_{T}(=\mathfrak{S}_4^2)$-action on $T\rightarrow T_0$, we have a $G_{\Y}$-action on $\Y\rightarrow T$.
\item By considering a $\mu_2$-extension $G_{\X}$ of $G_{\Y}$, we can construct a $G_{\X}$-action on $\X \rightarrow T$ by applying Proposition \ref{liftprop}.
\item We lift the $G_{\X}$-action on $\X\rightarrow T$ to $\widetilde{\X}\rightarrow T$.
\end{enumerate}

Throughout in this section, we identify $\Aut(S_0) = \mathfrak{S}(\{0,1,\infty\}) (\simeq \mathfrak{S}_3)$.

\begin{prop}\label{psprop}
There exists a $\mathfrak{S}_4$-action on $\P^1\times S_0\rightarrow S_0$ satisfying the following properties.
\begin{enumerate}
\item The natural map $\mathfrak{S}_4\rightarrow \Aut(S_0)=\mathfrak{S}(\{0,1,\infty\})$ is surjective. For $\rho\in \mathfrak{S}_4$, its image in $\mathfrak{S}(\{0,1,\infty\})$ is denoted by $\underline{\rho}$.
\item For $\rho\in \mathfrak{S}_4$ and $\sigma \in \Sigma$, we have $\rho\circ \sigma \circ \underline{\rho}^{-1}\in \Sigma$. Moreover, the following map induces an isomorphism of groups.
\begin{equation}\label{S4isom}
\begin{tikzcd}[row sep = tiny]
\mathfrak{S}_4 \arrow[r] & \mathfrak{S}(\Sigma) \arrow[r,equal] &[-15pt] \mathfrak{S}(\{0,1,1/c,\infty\}) \\
\rho \aru \arrow[rr,mapsto] & & \left(\sigma\mapsto \rho \circ \sigma \circ \underline{\rho}^{-1}\right)\aru
\end{tikzcd}
\end{equation}
\end{enumerate}
Since another group which is isomorphic to $\mathfrak{S}_4$ appears elesewhere, hereafter we denote this group by $\mathfrak{S}(\Sigma)$.
\end{prop}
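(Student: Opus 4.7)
My plan is to construct the $24$ automorphisms explicitly as fiberwise Möbius transformations, organize them into a group via a permutation representation on $\Sigma$, and then read off the two stated properties. The geometric picture is that $\P^1 \times S_0 \to S_0$ parametrizes configurations of four ordered points $0,1,1/c,\infty$ on $\P^1$ (the first three fixed, the fourth varying with $c$), and relabeling these four marked points should give an $\mathfrak{S}_4$-action lifting the anharmonic $\mathfrak{S}_3$-action on $S_0$.

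I would present the $24$ transformations as the union of two natural families. First, the six Möbius transformations of $\P^1$ preserving the set $\{0,1,\infty\}$ --- the classical anharmonic group --- each lifts to an automorphism of $\P^1 \times S_0$ covering the corresponding anharmonic substitution on $c$ (forced so that the image of the fourth section $x=1/c$ still has the form $x=1/c'$); for example $x \mapsto 1-x$ covers $c \mapsto c/(c-1)$, $x \mapsto 1/x$ covers $c \mapsto 1/c$, and so on. These six form a subgroup isomorphic to $\mathfrak{S}_3$ that fixes the fourth section and splits the projection onto $\mathfrak{S}(\{0,1,\infty\})$. Second, translation by each of the three non-trivial $2$-torsion sections of the Legendre family $\E \to S_0$ commutes with the hyperelliptic involution and therefore descends to a fiberwise Möbius transformation of $\P^1$; a direct group-law calculation (e.g.\ translation by $(0,0)$ becomes $x \mapsto 1/(cx)$ and translation by $(1,0)$ becomes $x \mapsto (x-1)/(cx-1)$) shows that each preserves $\{0,1,1/c,\infty\}$ setwise as a double transposition and covers the identity on $S_0$, so together with the identity they form a Klein four-subgroup $V_4$.

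To verify that the $24$ compositions genuinely organize into $\mathfrak{S}_4$, I would use the following uniqueness principle: any automorphism of $\P^1 \times S_0 \to S_0$ whose restriction to every fiber is a Möbius transformation and which stabilizes $\Sigma$ is determined by the permutation it induces on $\Sigma$, because a Möbius transformation is determined by its action on three points. Consequently the natural map from the group of such automorphisms to $\mathfrak{S}(\Sigma)$ is injective. Exhibiting $24$ explicit elements inducing $24$ distinct permutations on $\Sigma$ then forces this map to be a bijection, so the relevant group of automorphisms is $\mathfrak{S}_4=\mathfrak{S}(\Sigma)$ and the desired $\mathfrak{S}_4$-action is obtained as the inverse of this bijection.

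Properties (1) and (2) now follow immediately. For (1), the six outer generators project onto the full anharmonic group on $S_0$, giving surjectivity of $\mathfrak{S}_4 \to \mathfrak{S}(\{0,1,\infty\})$. For (2), the homomorphism $\rho\mapsto(\sigma\mapsto\rho\circ\sigma\circ\underline{\rho}^{-1})$ in (\ref{S4isom}) is, by construction, precisely the bijection used above to identify our $24$ automorphisms with $\mathfrak{S}(\Sigma)$, so it is tautologically an isomorphism. The main bookkeeping obstacle is verifying that the $c$-dependent Möbius transformations coming from $V_4$ really preserve $\{0,1,1/c,\infty\}$ on every fiber and that the $24$ explicit formulas induce pairwise distinct permutations of $\Sigma$; once these explicit computations are recorded, the remainder of the argument is purely combinatorial.
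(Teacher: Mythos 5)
Your proposal is correct in substance but takes a genuinely different route from the paper. The paper's proof is moduli-theoretic: it shows that $S_0$ represents the functor of $\P^1$-bundles with four mutually disjoint sections (via the cross-ratio), with $(\P^1\times S_0\rightarrow S_0;0,1,\infty,1/c)$ as the universal element, so permuting the four sections induces the $\mathfrak{S}_4$-action by functoriality (plus the rigidity of $\P^1$ with marked points), and properties (1) and (2) drop out of the cross-ratio and the construction. You instead build the action by hand, as the anharmonic $\mathfrak{S}_3$ lifted to the total space together with the Klein four-group obtained by descending translation by $2$-torsion sections of the Legendre family to the quotient $\P^1$, and then identify the group so generated with $\mathfrak{S}(\Sigma)$ by an injectivity-plus-counting argument. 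Both arguments work: the paper's is shorter and makes (2) tautological, while yours has the advantage of directly producing the explicit fiberwise formulas that the paper records in Table \ref{psgroupisomtable} and uses later anyway. Two small points to tighten. First, injectivity of the map to $\mathfrak{S}(\Sigma)$ does not follow from ``a M\"obius transformation is determined by three points'' alone: for an automorphism $(g,h)$ of the family inducing the trivial permutation of $\Sigma$, the three constant sections force each fiberwise map to be the identity, but you still need the fourth section to pin down the base map, i.e.\ $1/c=1/h(c)$ for all $c$ forces $h=\id$; this one-line supplement should be stated. Second, your attribution of formulas to $2$-torsion points is switched: translation by $(1,0)$ descends to $x\mapsto \frac{1-cx}{c(1-x)}$, realizing the permutation $(0\:1/c)(1\:\infty)$, whereas $x\mapsto \frac{x-1}{cx-1}=\frac{1-x}{1-cx}$ is the descent of translation by $(1/c,0)$, realizing $(0\:1)(1/c\:\infty)$ (compare Table \ref{psgroupisomtable}). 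This mislabeling is harmless for your argument, since you only use that the three descents realize the three double transpositions, but it should be corrected in the explicit bookkeeping you defer to the end.
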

\begin{proof}
Consider the following contravariant functor from the category of varieties to the category of sets.
\begin{equation*}
\begin{tikzcd}[ampersand replacement=\&]
X\arrow[r,mapsto] \&[-10pt]\left\{(Y\rightarrow X;p_1,p_2,p_3,p_4): 
\mathrm{
\begin{aligned}
& Y\rightarrow X \text{ is a $\P^1$-bundle.}\\
& p_1,\cdots, p_4\text{ are its sections such that}\\
& p_i(X)\cap p_j(X)=\emptyset  \text{ for any }i\neq j. \\
\end{aligned}} \right\}/(\text{$X$-isom.})
\end{tikzcd}
\end{equation*}
For each $(Y\rightarrow X;p_1,p_2,p_3,p_4)$, by considering the cross-ratio of $p_1(x)$, $p_2(x)$, $p_3(x)$, $p_4(x)\in \P^1$ at $x\in X$, we have the morphism $X\rightarrow S_0$.
Then we can check that $S_0$ represents this functor and $(\P^1\times S_0\rightarrow S_0;0,1,\infty,1/c)$ is the universal element. 
For each $\rho\in \mathfrak{S}_4$, $\rho$ induces a natural automorphism on this functor. 
Combining with the fact that an automorphism on $\P^1$ which stabilizes 4 points is identity, this induces a $\mathfrak{S}_4$-action on $\P^1\times S_0\rightarrow S_0$. 
The property (2) is clear from this construction. 
The property (1) follows from the property of the cross-ratio.
\end{proof}
We need an explicit description of the $\mathfrak{S}(\Sigma)$-action on $\P^1\times S_0$ for Step 3. We summarize them on Table \ref{psgroupisomtable}. In the table, for each $\rho\in \mathfrak{S}(\Sigma)$, the image of $c$ under $\underline{\rho}^\sharp :\O_{S_0}\rightarrow \underline{\rho}_*\O_{S_0}$ and the image of the local coordinate $z$ on $\P^1$ under $\rho^\sharp: \O_{\P^1\times S_0}\rightarrow \rho_*\O_{\P^1\times S_0}$ are given.
{\small 
\begin{table}[h]
\caption{The $\mathfrak{S}(\Sigma)$-action on $\P^1\times S_0$}
\label{psgroupisomtable}
{\renewcommand\arraystretch{1.3}
\begin{tabular}{c|c|c}
$\rho$ & $\underline{\rho}^\sharp(c)$ & $\rho^\sharp(z)$ \\ \hline
$\id$ & $c$& $z$  \\ 
\end{tabular}
\vspace{10pt}\\
\begin{tabular}{c|c|c||c|c|c||c|c|c}
$\rho$ & $\underline{\rho}^\sharp(c)$ & $\rho^\sharp(z)$                               &$\rho$ & $\underline{\rho}^\sharp(c)$ & $\rho^\sharp(z)$                                      &$\rho$ & $\underline{\rho}^\sharp(c)$ & $\rho^\sharp(z)$  \\ 
\hline
$(0\; 1)  $        &$\frac{c}{c-1}$           &$1-z$              &$(0\;1/c)   $         & $1-c$& $\frac{1-c z}{1-c}$ &$(0\;\infty) $  &$\frac{1}{c}$  & $\frac{1}{z}$ \\ 
$(1/c\;\infty)$  &$\frac{c}{c-1}   $             & $\frac{(1-c)z}{1-c z}$  &$(1\;\infty) $  &$ 1-c $ & $\frac{z}{z-1}$&$(1\; 1/c) $          &$\frac{1}{c}$&$c z$ \\
\end{tabular}
\vspace{10pt}\\
\begin{tabular}{c|c|c||c|c|c||c|c|c}
$\rho$ & $\underline{\rho}^\sharp(c)$ & $\rho^\sharp(z)$   &$\rho$ & $\underline{\rho}^\sharp(c)$ & $\rho^\sharp(z)$
&$\rho$ & $\underline{\rho}^\sharp(c)$ & $\rho^\sharp(z)$                          \\ 
\hline
$(0\;1)(1/c\;\infty)$& $c$ &$\frac{1-z}{1-c z}$&$(0\;1/c)(1\;\infty)$&$c$ & $\frac{1-c z}{c (1-z)} $ &$(0\;\infty)(1\;1/c) $& $c$& $\frac{1}{c z}$\\
\end{tabular}
\vspace{10pt}\\
\begin{tabular}{c|c|c||c|c|c}
 $\rho$ & $\underline{\rho}^\sharp(c)$ & $\rho^\sharp(z)$
 &$\rho$ & $\underline{\rho}^\sharp(c)$ & $\rho^\sharp(z)$\\
\hline
$(0\;1\;1/c)    $    & $\frac{1}{1-c}$ &$1-c z$ &$(0\;1/c \;1)     $    & $\frac{c-1}{c}$&  $\frac{c(1-z)}{c -1}$\\

$(0\;\infty\;1)$& $\frac{1}{1-c} $&$\frac{z-1}{z}$&$(0\;1\;\infty)$& $\frac{c-1}{c}$ & $\frac{1}{1-z}$ \\

$(0\;1/c\;\infty)$&$ \frac{1}{1-c}$ &$\frac{1-c}{1-c z}$&$(0\;\infty\;1/c)$&$\frac{c}{c-1}$& $ \frac{1-c z}{(1-c)z}$\\

$(1\;\infty\;1/c)$&  $\frac{1}{1-c}$& $\frac{(c-1)z}{1-z}$&$(1\;1/c\;\infty)$&$\frac{c-1}{c}$&$\frac{c z}{c z-1}$\\

\end{tabular}
\vspace{10pt}\\
\begin{tabular}{c|c|c||c|c|c||c|c|c}
$\rho$ & $\underline{\rho}^\sharp(c)$ & $\rho^\sharp(z)$
& $\rho$ & $\underline{\rho}^\sharp(c)$ & $\rho^\sharp(z)$
& $\rho$ & $\underline{\rho}^\sharp(c)$ & $\rho^\sharp(z)$ \\                                                                                      
\hline
$(0\;1/c\;1\;\infty)$ &$\frac{c}{c-1} $&$\frac{c-1}{c(1-z)}$&$(0\;1\;1/c\;\infty) $&$ 1-c$ &$\frac{1}{1-c z}$&$  (0\;1\;\infty\;1/c)$&$\frac{1}{c}  $  & $\frac{1-c z}{1-z} $\\
$(0\;\infty\;1\;1/c)$ &$\frac{c}{c -1}$ &$\frac{c z-1}{c z}$ &$(0\;\infty\;1/c\;1)$& $1-c$&$\frac{1-z}{(c-1)z}$ &$(0\;1/c\;\infty\;1)$& $\frac{1}{c}$    &$ \frac{c(1-z)}{1-c z}$     \\
\end{tabular}
}
\end{table}
}

From Proposition \ref{psprop}, we have a morphism $(\P^1\times S_0,\mathfrak{S}(\Sigma))\rightarrow (S_0,\mathfrak{S}(\{0,1,\infty\}))$. By taking the direct product, we have the morphism 
\begin{equation}
\label{dirprodauto}
\begin{tikzcd} 
(\P^1\times S_0\times \P^1\times S_0, \mathfrak{S}(\Sigma)\times \mathfrak{S}(\Sigma)) \arrow[r] & (S_0\times S_0, \mathfrak{S}(\{0,1,\infty\}) \times \mathfrak{S}(\{0,1,\infty\}) ).
\end{tikzcd}
\end{equation}
We denote the group $\mathfrak{S}(\Sigma)\times \mathfrak{S}(\Sigma)$ (resp. $\mathfrak{S}(\{0,1,\infty\}) \times \mathfrak{S}(\{0,1,\infty\})$) by $G_{\Y_0}$ (resp. $G_{T_0}$). Since $T_0\subset S_0\times S_0$ is stable under the $G_{T_0}$-action, (\ref{dirprodauto}) induces a  morphism $(\Y_0,G_{\Y_0})\rightarrow (T_0,G_{T_0})$. 
Furthermore, by the property (2) of Proposition \ref{psprop}, we see that $\Sigma^2\subset \Y_0$ is stable under the $G_{\Y_0}$-action on $\Y_0$.
We finish Step 1.

Next, we will construct a group action on $T$.
Recall that $T$ is a finite \'etale cover of $T_0$ corresponding to $B_0 \rightarrow B_0[\sqrt{a},\sqrt{b},\sqrt{1-a},\sqrt{1-b}]$.

\begin{prop}\label{autosp}
Let $S'=\Spec A_0[\sqrt{c},\sqrt{1-c}]$. Then there exists a $\mathfrak{S}_4$-action on $S'\rightarrow S_0$ such that the natural map $\mathfrak{S}_4\rightarrow \Aut(S_0)=\mathfrak{S}(\{0,1,\infty\})$ is surjective.
\end{prop}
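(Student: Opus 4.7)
My plan is to lift the $\mathfrak{S}(\Sigma)\simeq \mathfrak{S}_4$-action on $S_0$ from Proposition \ref{psprop} to $S'$, using the semidirect product decomposition $\mathfrak{S}(\Sigma)=V_4\rtimes \mathfrak{S}_3$. From Table \ref{psgroupisomtable}, the homomorphism $\mathfrak{S}(\Sigma)\to \Aut(S_0)=\mathfrak{S}(\{0,1,\infty\})$ induced by that action has kernel exactly the Klein four subgroup $V_4=\{e,(0\;1)(1/c\;\infty),(0\;1/c)(1\;\infty),(0\;\infty)(1\;1/c)\}$, and the subgroup $\{\rho\in \mathfrak{S}(\Sigma):\rho(1/c)=1/c\}\simeq \mathfrak{S}_3$ maps isomorphically onto $\mathfrak{S}(\{0,1,\infty\})$, providing a splitting.

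The construction has two parts. First, I would identify $V_4\subset \mathfrak{S}(\Sigma)$ with the Galois group of the cover $S'\to S_0$, whose four elements act on $A_0[\sqrt c,\sqrt{1-c}]$ by the four sign changes of the two square roots. Second, for each transposition $\tau$ in the splitting $\mathfrak{S}_3\subset \mathfrak{S}(\Sigma)$, the action on $c$ is given by Table \ref{psgroupisomtable}, and admits four square-root extensions to an automorphism of $S'$; exactly one of these, up to composition with an element of $V_4$, is an involution, and this selects the lift $\tilde\tau$. For example, $(1\;\infty)$ sends $c\mapsto 1-c$ and lifts to $\sqrt c\mapsto \sqrt{1-c}$, $\sqrt{1-c}\mapsto \sqrt c$; whereas $(0\;\infty)$ sends $c\mapsto 1/c$ and lifts to $\sqrt c\mapsto -\sqrt c/c$, $\sqrt{1-c}\mapsto \sqrt{-1}\,\sqrt c\sqrt{1-c}/c$ (using $\sqrt{-1}\in \C\subset A_0$).

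It then remains to check (a) that these lifted involutions satisfy the braid relations of $\mathfrak{S}_3$, so that they extend to a homomorphism $\mathfrak{S}_3\to \Aut(S')$, and (b) that the conjugation action of this lifted $\mathfrak{S}_3$ on the $V_4$ of sign changes matches the conjugation action of the splitting $\mathfrak{S}_3\subset \mathfrak{S}(\Sigma)$ on $V_4\subset \mathfrak{S}(\Sigma)$, which permutes the three non-identity double transpositions. Both reduce to finite direct computations using the explicit formulas. Granted (a) and (b), the semidirect product structure yields the desired $\mathfrak{S}_4=\mathfrak{S}(\Sigma)$-action on $S'$ lifting the given one on $S_0$, and surjectivity onto $\mathfrak{S}(\{0,1,\infty\})$ is immediate because the $\mathfrak{S}_3$-summand already surjects. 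The main obstacle is the bookkeeping in (a) and (b), where signs and occurrences of $\sqrt{-1}$ must be tracked carefully; a more conceptual alternative would be to observe that the smooth projective closure of $S'$ is the conic $U^2+V^2=W^2\subset \P^2$, hence isomorphic to $\P^1$, and realize $\mathfrak{S}_4$ inside $\Aut(\overline{S'})=\mathrm{PGL}_2$ as the octahedral subgroup stabilizing the six-point orbit $\overline{S'}\setminus S'$.
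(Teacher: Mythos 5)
Your proposal is correct, but it follows a genuinely different route from the paper. The paper passes to the coordinate $\gamma$ via the isomorphism $A_0[\sqrt{c},\sqrt{1-c}]\simeq \C\bigl[\gamma,\tfrac{1}{\gamma(\gamma^4-1)}\bigr]$, so that $S'$ becomes $\P^1$ minus the octahedral vertex set $\{0,\infty,\pm1,\pm\sqrt{-1}\}$; it then shows $\Aut(S'\rightarrow S_0)$ is an extension of $\Aut(S_0)\simeq\mathfrak{S}_3$ by the deck group $\mu_2\times\mu_2$ (so of order $24$) and identifies this whole group with $\mathfrak{S}_4$ at once by letting it act on the four pairs of opposite faces of the octahedron -- essentially the ``conceptual alternative'' you mention at the end. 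You instead build the action from the inside: you take the deck group as the $V_4$, choose involutive lifts of two transpositions of $\mathfrak{S}(\{0,1,\infty\})$, and verify the $\mathfrak{S}_3$-relations and the compatibility of conjugation with the $V_4$-action, so that $\mathfrak{S}_4=V_4\rtimes\mathfrak{S}_3$ maps to $\Aut(S'\rightarrow S_0)$; surjectivity onto $\Aut(S_0)$ is then automatic. Your deferred checks (a) and (b) do go through with your displayed lifts: the lift of $(0\;\infty)$ with the minus sign is indeed the involutive choice, the composite of your two lifts has order $3$, and conjugation sends the sign change of $\sqrt{c}$ to the sign change of $\sqrt{1-c}$ (resp.\ to the simultaneous sign change), which is a faithful $\mathfrak{S}_3$-action on $V_4$, so the resulting subgroup is $\mathfrak{S}_4$ and not a split abelian degeneration. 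Two small imprecisions worth flagging: for each transposition there are exactly \emph{two} involutive lifts among the four (they differ by the deck transformation changing the sign of one square root; the other two lifts have order $4$), so ``exactly one up to composition with an element of $V_4$'' is loosely phrased; and for the statement as written you only need the conjugation action on $V_4$ to be faithful, not to match the specific splitting $\mathfrak{S}(\{0,1,\infty\})\subset\mathfrak{S}(\Sigma)$, since the later constructions only use the two $\mathfrak{S}_4$'s fibered over $\mathfrak{S}_3$. What your route buys is an explicit splitting and explicit formulas for the lifted involutions; what the paper's route buys is the cleaner statement that $\Aut(S'\rightarrow S_0)$ is itself isomorphic to $\mathfrak{S}_4$, with no relation-checking, at the cost of the coordinate change to $\gamma$.
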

\begin{proof}
We have the following ring isomorphism.
\begin{equation}
\label{ringisom}
\begin{tikzcd}
A_0\left[\sqrt{c},\sqrt{1-c}\right] \arrow[r,"\sim"] &[-10pt] \C\left[\gamma,\frac{1}{\gamma(\gamma^4-1)}\right]; \sqrt{c},\sqrt{1-c} \arrow[r,mapsto] & \frac{\gamma+\frac{1}{\gamma}}{2}, \frac{\gamma-\frac{1}{\gamma}}{2\sqrt{-1}}
\end{tikzcd}
\end{equation}
Using the ring isomorphism (\ref{ringisom}), we can check directly that for each $\rho \in \Aut(S_0)$, there exists a lift of $\rho$ in $\Aut(S')$.
Moreover, the $S_0$-automorphism group of $S'$ is isomorphic to $\mu_2\times \mu_2$. Thus we have the following exact sequence of groups. 
\begin{equation*}
1 \lra \mu_2\times \mu_2 \lra \Aut(S'\rightarrow S_0) \lra \Aut(S_0) \lra 1
\end{equation*}
To prove the proposition, we should show that $\Aut(S'\rightarrow S_0)$ is isomorphic to $\mathfrak{S}_4$. We will construct an $\Aut(S'\rightarrow S_0)$-action on a set of cardinality $4$. By the ring isomorphism (\ref{ringisom}), we have $S' \simeq \P^1-\{0,\infty,\pm 1, \pm\sqrt{-1}\}$. Thus $\Aut(S'\rightarrow S_0)$ acts on the set $\{0,\infty,\pm 1, \pm \sqrt{-1}\}$. Let $F$ be the set
\begin{equation*}
F = \left\{
\begin{aligned}
&\{\{1,\sqrt{-1},\infty\},\{-1,-\sqrt{-1},0\}\}, && \{\{-1,\sqrt{-1},\infty\},\{1,-\sqrt{-1},0\}\} \\
&\{\{1,-\sqrt{-1},\infty\},\{-1,\sqrt{-1},0\}\}, &&\{\{-1,-\sqrt{-1},\infty\},\{1,\sqrt{-1},0\}\} 
\end{aligned}
\right\}.
\end{equation*}
If we plot $\pm 1, \pm\sqrt{-1},0,\infty$ on the Riemann sphere, they are the vertexes of the regular octahedron. Then $F$ is the set of pairs of opposite faces of the octahedron. We can check that $\Aut(S'\rightarrow S_0)$ acts on $F$ and the group homomorphism $\Aut(S'\rightarrow S_0)\rightarrow \mathfrak{S}(F)$ is surjective. By the exact sequence above, the orders of $\Aut(S'\rightarrow S_0)$ and $\mathfrak{S}(F)$ coincide. Thus $\Aut(S'\rightarrow S_0)\simeq \mathfrak{S}(F)$.
\end{proof}
From Proposition \ref{autosp}, we have a morphism $(S',\mathfrak{S}_4)\rightarrow (S_0,\mathfrak{S}(\{0,1,\infty\}))$.
By taking its direct product, we have
\begin{equation}\label{dirprodSS}
\begin{tikzcd}
(S'\times S',\mathfrak{S}_4\times \mathfrak{S}_4)\arrow[r] & (S_0\times S_0,G_{T_0}).
\end{tikzcd}
\end{equation}
We denote the group $\mathfrak{S}_4\times \mathfrak{S}_4$ by $G_{T}$.
Since $T$ is a fiber product of $T_0$ and $S'\times S'$ over $S_0\times S_0$, this induces a morphism $(T,G_{T})\rightarrow (T_0,G_{T_0})$.

We have already constructed the morphisms $(\Y_0,G_{\Y_0})\rightarrow (T_0,G_{T_0})$ and $(T,G_{T})\rightarrow (T_0,G_{T_0})$.
Then the group
\begin{equation*}
G_{\Y_0}\times_{G_{T_0}} G_{T}  = (\mathfrak{S}(\Sigma)^2)\times_{\mathfrak{S}(\{0,1,\infty\})^2} (\mathfrak{S}_4^2)\simeq (\mathfrak{S}_4\times_{\mathfrak{S}_3}\mathfrak{S}_4)^2
\end{equation*}
acts on $\Y = \Y_0\times_{T_0} T$ by (\ref{fiberprodprop}). 
We denote this group by $G_\Y$.
Thus we have the morphism $(\Y,G_\Y)\rightarrow (T,G_{T})$.
We often denote an element of $G_\Y$ by a 3-tuple
\begin{equation*}
(\rho_1,\rho_2,\tau) \in \mathfrak{S}(\Sigma)\times \mathfrak{S}(\Sigma)\times G_{T}
\end{equation*}
such that $\tau\in G_{T}$ and $(\rho_1,\rho_2)\in G_{\Y_0}$ have the same image in $G_{T_0}=\mathfrak{S}(\{0,1,\infty\})^2$.
We finish Step 2.

We will lift this group action to the double cover $\X\rightarrow \Y$. Let $\L$ be the line bundle on $\Y$ defined by 
\begin{equation*}
\L = pr_1^*\Omega^1_{\P^1\times S_0/S_0} \otimes pr_2^*\Omega^1_{\P^1\times S_0/S_0} 
\end{equation*}
where $pr_i$ is a morphism $\Y\rightarrow \Y_0 \hookrightarrow (\P^1\times S_0)\times (\P^1\times S_0) \xrightarrow{pr_i} \P^1\times S_0$. Let $s$ be the section of $\L^{\otimes(-2)}$ defined by 
\begin{equation*}
s = f(x)g(y)(dx\otimes dy)^{\otimes(-2)}.
\end{equation*}
We have the natural $G_{\Y}$-linearization $(\Phi_\rho)_{\rho\in G_{\Y}}$ of $\L$ since it is constructed from sheaves of differentials.

The morphism $\X\rightarrow \Y$ is the double covering associated with $(\L,s)$.
Since $\div(s)$ is equal to the branching locus of $\X\rightarrow \Y$, $\div(s)$ is stable under the $G_{\Y}$-action.
Hence we have the 1-cocycle $\eta: G_{\Y}\rightarrow \Gamma(\Y,\O_{\Y}^\times) = B^\times$ such that 
\begin{equation*}
\Phi_\rho^{\otimes(-2)}(\rho^*(s)) = \eta(\rho)^{-1}\cdot s
\end{equation*}
for any $\rho\in G_{\Y}$.
We can compute $\eta$ as follows.

\begin{prop}\label{chikeisan}
For $\rho = (\rho_1,\rho_2,\tau)\in G_{\Y}$, $\eta(\rho)$ is determined by $\underline{\rho}_1, \underline{\rho}_2 \in \mathfrak{S}(\{0,1,\infty\})$ and calculated by the following formula.
\begin{equation*}
\eta(\rho) = \eta_1(\underline{\rho}_1)\cdot \eta_2(\underline{\rho}_2)
\end{equation*}
where $\eta_1(\underline{\rho}_1), \eta_2(\underline{\rho}_2)\in B^\times$ are given by the following table.
\begin{table}[H]
{\renewcommand\arraystretch{1.3}
\begin{tabular}{c|c||c|ccc|c||c|c}
$\underline{\rho}_1$ & $\eta_1(\underline{\rho}_1)$  & $\underline{\rho}_1$ &$\eta_1(\underline{\rho}_1)$  &\phantom{hogeho} & $\underline{\rho}_2$ & $\eta_2(\underline{\rho}_2)$  & $\underline{\rho}_2$& $\eta_2(\underline{\rho}_2)$ \\ \cline{1-4}\cline{6-9}
$\id$ & $ 1 $ & $(0\:1)$ &  $-1$  & & $\id$ & $ 1 $ & $(0\:1)$ &  $-1$ \\
$(1\:\infty)$ & $1-a$ &$(0\:1\:\infty)$ &$a-1$  && $(1\:\infty)$ & $1-b$ &$(0\:1\:\infty)$ &$b-1$ \\
$(0\:\infty)$ & $a$ & $(0\:\infty\:1)$& $ -a$ && $(0\:\infty)$ & $b$ & $(0\:\infty\:1)$& $ -b$\\
\end{tabular}
}
\end{table}
\end{prop}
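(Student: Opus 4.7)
The plan is to split $\eta$ as $\eta_1\cdot \eta_2$ via the external product structure of $(\L,s)$ and then show each of the two cocycles factors through the natural surjection $\mathfrak{S}(\Sigma)\twoheadrightarrow\mathfrak{S}(\{0,1,\infty\})$. The line bundle decomposes as $\L = pr_1^*\L_1\otimes pr_2^*\L_1$ with $\L_1 = \Omega^1_{\P^1\times S_0/S_0}$, and the section as $s = pr_1^*(s_1)\otimes pr_2^*(s_2)$ with $s_1 = f(x)(dx)^{\otimes(-2)}$ and $s_2 = g(y)(dy)^{\otimes(-2)}$; the natural linearization is likewise the tensor product of the natural linearizations on each factor. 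Since $a \in B$ is pulled back from $A_0 \subset B_0 \subset B$, the $G_T$-component $\tau$ of $\rho = (\rho_1,\rho_2,\tau)$ acts on $a$ only through its image $(\underline{\rho}_1,\underline{\rho}_2) \in G_{T_0}$, so $\rho^*(s_1)$ depends only on $\rho_1$ (through both $\rho_1^\sharp(x)$ and $\underline{\rho}_1^\sharp(a)$). The cocycle identity applied to each factor then gives $\eta(\rho) = \eta_1(\rho_1)\cdot \eta_2(\rho_2)$, where
\begin{equation*}
\eta_1(\rho_1) \;=\; \frac{f(x)\cdot\bigl(d\rho_1^\sharp(x)/dx\bigr)^{2}}{\rho_1^\sharp(f(x))}
\end{equation*}
with $\rho_1^\sharp(f(x))$ denoting $f$ evaluated after the substitutions $x\mapsto \rho_1^\sharp(x)$, $a\mapsto \underline{\rho}_1^\sharp(a)$; similarly for $\eta_2$.

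Next I would show $\eta_1$ is trivial on the kernel of $\mathfrak{S}(\Sigma)\twoheadrightarrow\mathfrak{S}(\{0,1,\infty\})$, which is the Klein four subgroup $K = \{\id,(0\,1)(1/c\,\infty),(0\,1/c)(1\,\infty),(0\,\infty)(1\,1/c)\}$. For each nontrivial $\rho_1 \in K$, Table~\ref{psgroupisomtable} provides $\underline{\rho}_1^\sharp(c) = c$ (so $a$ is unchanged) and an explicit Möbius $\rho_1^\sharp(x)$ permuting the four branch points $\{0,1,1/a,\infty\}$; substitution into the formula above then shows numerator and denominator cancel. For instance, $(0\,1)(1/c\,\infty)$ sends $x \mapsto (1-x)/(1-ax)$, giving $\rho_1^\sharp(f(x)) = x(1-x)(1-a)^2/(1-ax)^3$ and $(d\rho_1^\sharp(x)/dx)^2 = (a-1)^2/(1-ax)^4$, whose ratio with $f(x) = x(1-x)(1-ax)$ equals $1$. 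The other two double transpositions are analogous. Hence $\eta_1$ descends to a function on $\mathfrak{S}(\{0,1,\infty\})$.

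Finally I would evaluate $\eta_1$ on each of the six classes in $\mathfrak{S}(\{0,1,\infty\})$ by choosing a convenient lift in $\mathfrak{S}(\Sigma)$ (the correspondence is read off from $\underline{\rho}_1^\sharp(c)$ via its action on $\{0,1,\infty\}$) and applying the formula with data from Table~\ref{psgroupisomtable}. Each case reduces to a short rational-function manipulation; for example, the lift $(0\,1) \in \mathfrak{S}(\Sigma)$ of $(1\,\infty) \in \mathfrak{S}(\{0,1,\infty\})$ yields $\rho_1^\sharp(f(x)) = f(x)/(1-a)$ and $(d\rho_1^\sharp(x)/dx)^2 = 1$, so $\eta_1 = 1-a$; the lift $(0\,\infty)$ of $(0\,\infty)$ gives $\eta_1 = a$; and so on. The identical procedure on the second factor gives $\eta_2(\underline{\rho}_2)$ with $b,y$ in place of $a,x$. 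The main obstacle is purely computational bookkeeping across the twelve rational-function computations and the three Klein-four invariance checks; no conceptual difficulty arises.
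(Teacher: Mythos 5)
Your proposal is correct and follows essentially the same route as the paper: the paper's proof likewise reads $\eta$ off from the natural linearization acting on $dx\otimes dy$, so that $\eta(\rho)$ is the product of $f(x)\bigl(\tfrac{\partial}{\partial x}\rho_1^\sharp(x)\bigr)^2/\rho_1^\sharp(f(x))$ with its $y$-analogue, and then evaluates case by case using Table \ref{psgroupisomtable}. Your refinements --- making the external tensor splitting of $(\L,s)$ explicit and using triviality of $\eta_1$ on the Klein four-group (together with the cocycle identity and the fact that this subgroup acts trivially on $S_0$) to reduce the $24$ evaluations per factor to coset representatives --- are organizational shortcuts rather than a different argument, and your sample computations (e.g.\ $\eta_1=1-a$ for the lift $(0\:1)$ of $(1\:\infty)$, $\eta_1=a$ for $(0\:\infty)$) agree with the stated table.
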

\begin{proof}
By the definition of the $G_{\Y}$-linearization on $\L$, we have 
\begin{equation*}
\Phi_{\rho}(\rho^*(dx\otimes dy)) = \left(\frac{\del }{\del x}\rho_1^\sharp(x)\right)\left(\frac{\del }{\del y}\rho_2^\sharp(y)\right)\cdot (dx\otimes dy).
\end{equation*}
Using Table \ref{psgroupisomtable}, we can compute $\eta$ for each $\rho_1,\rho_2\in \mathfrak{S}(\Sigma)$ and we get the result.
\end{proof}
To apply Proposition \ref{liftprop}, we should construct a ``square root" of the 1-cocycle $\eta$. We define coboundary 1-cocycles $\phi_1,\phi_2:G_{T}\rightarrow B^\times$ by 
\begin{equation*}
\phi_1(\tau) = \tau^\sharp\left(\frac{\sqrt{a}\sqrt{1-a}}{a^2-a+1}\right)\cdot \left(\frac{\sqrt{a}\sqrt{1-a}}{a^2-a+1}\right)^{-1},\:\phi_2(\tau) = \tau^\sharp\left(\frac{\sqrt{b}\sqrt{1-b}}{b^2-b+1}\right)\cdot \left(\frac{\sqrt{b}\sqrt{1-b}}{b^2-b+1}\right)^{-1}
\end{equation*}
for $\tau\in G_{T}$. 
Then we can check that for any $\rho=(\rho_1,\rho_2,\tau)\in G_{\Y}$, 
\begin{equation}\label{sqrteqn}
\eta_i(\underline{\rho}_i) = \sgn(\underline{\rho}_i)\phi_i(\tau)^2
\end{equation}
holds for $i=1,2$.
Hence $\phi_1\cdot\phi_2$ coincides with a square root of $\eta$ up to sign.
To get a square root of sign, we enlarge the group $G_{\Y}$ as follows.

We define the group $G_{\X}$ as the fiber product $G_{\Y}\times_{\mu_2} \mu_4$ where the group homomorphisms $G_{\Y}\rightarrow \mu_2$ and $\mu_4\rightarrow \mu_2$ is given by
\begin{equation*}
\begin{aligned}
&G_{\Y} \rightarrow \mu_2; && (\rho_1,\rho_2,\tau) \mapsto \sgn(\underline{\rho}_1)\sgn(\underline{\rho}_2) \\
&\mu_4 \rightarrow \mu_2; && \zeta \mapsto \zeta^2.
\end{aligned}
\end{equation*}
We denote an element of $G_{\X}$ by a 4-tuple 
\begin{equation*}
(\rho_1,\rho_2,\tau,\zeta) \in \mathfrak{S}(\Sigma)\times \mathfrak{S}(\Sigma)\times G_{T}\times \mu_4.
\end{equation*}
The group $G_{\X}$ acts on $\Y$ through the surjection $G_{\X}\twoheadrightarrow G_{\Y}$.
For $\rho = (\rho_1,\rho_2,\tau,\zeta)\in G_{\X}$, we define 
\begin{equation*}
\chi(\rho) = \zeta\cdot \phi_1(\tau)\cdot \phi_2(\tau) \quad \in B^\times=\Gamma(\Y,\O_\Y^\times).
\end{equation*}
Then by (\ref{sqrteqn}), $\chi:G_{\X}\rightarrow \Gamma(\Y,\O_{\Y}^\times)$ is a 1-cocycle satisfying the relation
\begin{equation*}
\chi(\rho)^2 = \eta(\rho).
\end{equation*}
Hence we have the following result.
\begin{prop}
There exists a $G_{\X}$-action on $\X$ such that the morphism $\X\rightarrow \Y$ is $G_{\X}$-equivariant.
\end{prop}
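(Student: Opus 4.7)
The plan is to deduce the proposition as a direct application of Proposition \ref{liftprop}, using the data $(\L, s)$, the $G_{\X}$-action on $\Y$ (induced by the surjection $G_{\X}\twoheadrightarrow G_{\Y}$), the induced $G_{\X}$-linearization of $\L$, and the 1-cocycle $\chi$. The only nontrivial work is verifying that the hypotheses of Proposition \ref{liftprop} are satisfied.

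First, I would transport the $G_{\Y}$-linearization $(\Phi_\rho)_{\rho\in G_{\Y}}$ of $\L$ along $G_{\X}\twoheadrightarrow G_{\Y}$ to obtain a $G_{\X}$-linearization; since $G_{\X}\to G_{\Y}$ is a group homomorphism and the cocycle condition on the $\Phi_\rho$'s is preserved by pullback, this is automatic. Next, I would check that $\chi:G_{\X}\to B^\times$ is a genuine 1-cocycle with respect to the $G_{\X}$-action on $\Y$. The factor $\phi_1\cdot\phi_2$ is a coboundary on $G_{T}$, hence a 1-cocycle, and it pulls back to a 1-cocycle on $G_{\X}$ via $G_{\X}\to G_{T}$; the factor $\zeta\in\mu_4$ is a constant in $B^\times$, fixed by every $\rho^\sharp$, and the assignment $\rho\mapsto\zeta$ is a group homomorphism $G_{\X}\to\mu_4\hookrightarrow B^\times$, so it is also a 1-cocycle. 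Multiplying the two yields the 1-cocycle $\chi$.

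Finally, I would verify the crucial identity $\Phi_\rho^{\otimes(-2)}(\rho^*(s))=\chi(\rho)^{-2}\cdot s$ for all $\rho\in G_{\X}$. By the definition of $\eta$ preceding Proposition \ref{chikeisan} and the fact that the $G_{\X}$-action and linearization on $\Y,\L$ factor through $G_{\Y}$, we have $\Phi_\rho^{\otimes(-2)}(\rho^*(s))=\eta(\underline{\rho})^{-1}\cdot s$, where $\underline{\rho}\in G_{\Y}$ is the image of $\rho$. On the other hand, for $\rho=(\rho_1,\rho_2,\tau,\zeta)\in G_{\X}$, the definition of $\chi$ together with $\zeta^4=1$ and $\zeta^2=\sgn(\underline{\rho}_1)\sgn(\underline{\rho}_2)$ (which is precisely the fiber-product condition defining $G_{\X}$) gives
\begin{equation*}
\chi(\rho)^2=\zeta^2\cdot\phi_1(\tau)^2\cdot\phi_2(\tau)^2=\sgn(\underline{\rho}_1)\phi_1(\tau)^2\cdot\sgn(\underline{\rho}_2)\phi_2(\tau)^2=\eta_1(\underline{\rho}_1)\cdot\eta_2(\underline{\rho}_2)=\eta(\underline{\rho}),
\end{equation*}
where the middle equality is exactly the relation (\ref{sqrteqn}) and the last equality is Proposition \ref{chikeisan}. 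Substituting gives the required cocycle equation, so Proposition \ref{liftprop} applies and produces the desired $G_{\X}$-action on $\X$ together with the $G_{\X}$-equivariance of $\X\to\Y$.

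The only part requiring genuine care is the compatibility $\zeta^2=\sgn(\underline{\rho}_1)\sgn(\underline{\rho}_2)$ used above; this is precisely why the group $G_{\X}$ was defined as the $\mu_4$-extension of $G_{\Y}$ along the sign map, so the construction is really dictated by the failure of $\phi_1\phi_2$ to be an exact square root of $\eta$ and forces no additional verification beyond the definitions already given.
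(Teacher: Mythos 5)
Your proposal is correct and follows essentially the same route as the paper: the paper likewise pulls the $G_{\Y}$-linearization of $\L$ back to a $G_{\X}$-linearization, notes $\Phi_\rho^{\otimes(-2)}(\rho^*(s))=\eta(\rho)^{-1}\cdot s=\chi(\rho)^{-2}\cdot s$ using $\chi(\rho)^2=\eta(\rho)$ (which, as you observe, comes from (\ref{sqrteqn}), Proposition \ref{chikeisan} and the fiber-product condition $\zeta^2=\sgn(\underline{\rho}_1)\sgn(\underline{\rho}_2)$ defining $G_{\X}$), and then applies Proposition \ref{liftprop}. Your explicit verification that $\chi$ is a 1-cocycle (product of the pulled-back coboundary $\phi_1\phi_2$ and the homomorphism $\rho\mapsto\zeta$) is just spelling out what the paper asserts in the text preceding the proposition.
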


\begin{proof}
The $G_{\Y}$-linearization $(\Phi_\rho)_{\rho \in G_{\Y}}$ of $\L$ induces the natural $G_{\X}$-linearization of $\L$.
This linearization satisfies the equation
\begin{equation*}
\Phi_{\rho}^{\otimes(-2)}(\rho^*(s)) = \eta(\rho)^{-1}\cdot s = \chi(\rho)^{-2}\cdot s.
\end{equation*}
Hence by applying Proposition \ref{liftprop}, we have the result.
\end{proof}
Thus we have the morphism $(\X,G_{\X})\rightarrow (T,G_{T})$. We finish Step 3.

Recall that the set $\Sigma^2$ on $\Y_0$ is stable under the $G_{\Y_0}$-action. 
Then its base change $\Sigma^2$ on $\Y$ is also stable under the $G_{\Y}$-action. 
Since $\Sigma^2\subset \Y$ is contained in the branching locus of $\X\rightarrow \Y$ and $\X\rightarrow \Y$ is $G_{\X}$-equivariant, $\Sigma^2$ on $\X$ is also $G_{\X}$-stable. 
Then by the universality of the blowing-up, for any $\rho\in G_{\X}$, there exists the unique lift $\widetilde{\rho}\in \Aut(\widetilde{\X})$ of $\rho$.
By the uniqueness, $G_{\X}\rightarrow \Aut(\widetilde{\X});\rho\mapsto \widetilde{\rho}$ defines a $G_{\X}$-action on $\widetilde{\X}$. We finish Step 4.

Finally, we can prove the main result of this section.
\begin{proof}[(Proof of Proposition \ref{mainautoprop})] We have the morphisms
\begin{equation*}
(\widetilde{\X},G_{\X})\lra  (\X,G_{\X}) \lra  (\Y,G_{\Y}) \lra (T,G_{T}).
\end{equation*}
By composing these morphisms, we have the $G_{\X}$-action on the family $\widetilde{\X}\rightarrow T$.
\end{proof}

At the end of this section, we see an explicit description of the $G_{\X}$-action on $\widetilde{\X}$ using the local coordinates $x,y,v$ on $V$.
Let $\rho=(\rho_1,\rho_2,\tau,\zeta)\in G_{\X}$.
Since $\widetilde{\X}\rightarrow \Y$ is $G_{\X}$-equivariant, we see that 
\begin{equation*}
x\mapsto  \rho_1^\sharp(x),\quad y\mapsto  \rho_2^\sharp(y).
\end{equation*}
Locally on $U\subset \Y$, the morphism $\X\rightarrow \Y$ is described by the ring homomorphism
\begin{equation*}
\begin{tikzcd}
B[x,y]\arrow[r] & B[x,y,u]/(u^2-f(x)g(y))
\end{tikzcd}
\end{equation*}
where $u$ corresponds to the local section $(dx\otimes dy)$ of $\L$.
By the construction of the $G_{\X}$-action, the local coordinate $u$ transforms
\begin{equation*}
u \mapsto \chi(\rho)^{-1}\frac{\del}{\del x}(\rho_1^\sharp(x))\frac{\del}{\del y}(\rho_2^\sharp(y)) u.
\end{equation*}
Since $u=vf(x)$, by the formula $\rho_1^\sharp(f(x))=\eta_1(\underline{\rho}_1)^{-1}\left(\frac{\del}{\del x}\rho_1^\sharp(x)\right)^2f(x)$, $v$ transforms as follows:
\begin{equation*}
v\mapsto \frac{\eta_1(\underline{\rho}_1)}{\chi(\rho)}\frac{\frac{\del}{\del y}(\rho_2^\sharp(y))}{\frac{\del}{\del x}(\rho_1^\sharp(x))}v.
\end{equation*}

\section{The group action on the higher Chow cycles}
In this section, we prove the main result in this paper by using some technical propositions which is proved Section 8.
\subsection{The group action on the higher Chow cycles}
First, we produce more families of higher Chow cycles from $\xi_0,\xi_1$ and $\xi_\infty$ by using the group action.

For $t\in T$, $\rho=(\rho_1,\rho_2,\tau,\zeta)\in G_{\X}$ induces the isomorphism $\rho_t:\widetilde{\X}_t\rightarrow \widetilde{\X}_{\tau(t)}$ such that 
\begin{equation}\label{rhot}
\begin{tikzcd}
\widetilde{\X}_t \arrow[d,hook'] \arrow[r,"\rho_t",dashrightarrow] & \widetilde{\X}_{\tau(t)}  \arrow[d,hook'] \\
\widetilde{\X} \arrow[r,"\rho"] & \widetilde{\X}
\end{tikzcd}
\end{equation}
commutes. 
The morphism $\rho_t$ induces a map $\rho_t^*:\CH^2(\widetilde{\X}_{\tau(t)},1)\rightarrow \CH^2(\widetilde{\X}_t,1)$.
For an algebraic family of higher Chow cycles $\xi=\{\xi_t\}_{t\in T}$ on $\widetilde{\X}\rightarrow T$, we define the algebraic family $\rho^*\xi$ of higher Chow cycles by $\rho^*\xi = \{\rho_t^*(\xi_{\tau(t)})\}_{t\in T}$. 

In particular, we have defined families of higher Chow cycles $\rho^*\xi_0, \rho^*\xi_1$ and $\rho^*\xi_\infty$ for $\rho\in G_{\X}$.
For each $t\in T$, we define the subgroup $\Xi_t$ of $\CH^2(\widetilde{\X}_t,1)_{\ind}$ by 
\begin{equation*}
\Xi_t = \left\langle ((\rho^*\xi_\bullet)_t)_\ind  : \bullet \in\{ 0,1,\infty\}, \rho\in G_{\X}\right\rangle_{\Z}.
\end{equation*}

Then we can state the main theorem.

\begin{thm}\label{mainthm}
For a very general $t\in T$, we have $\rank \Xi_t \ge 18$.
In particular, $\rank \CH^2(\X_t,1)_\ind \ge 18$.
\end{thm}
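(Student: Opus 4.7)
The plan is to bound $\rank\Xi_t$ from below by combining the transcendental regulator, the Picard-Fuchs operator $\Dc$, and a countability argument. Because $r$ factors through $\CH^2(\widetilde{\X}_t,1)_{\ind}$, one has $\rank \Xi_t \ge \rank_{\Z} r(\Xi_t)$. For each $\rho \in G_{\X}$ and $\bullet \in \{0,1,\infty\}$, the family $\rho^*\xi_\bullet$ is an algebraic family of higher Chow cycles, so it has a section $\nu_\rho^\bullet := \nu_{\tr}(\rho^*\xi_\bullet) \in \Gamma(T,\Qc)$ with $\ev_t(\nu_\rho^\bullet) = r((\rho^*\xi_\bullet)_t)$. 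Setting $\Xi := \sum_{\rho,\bullet}\Z\cdot \nu_\rho^\bullet \subset \Gamma(T,\Qc)$, a countability argument analogous to the proof of Lemma~\ref{basiclem} (applied to each of the countably many nontrivial $\Z$-relations among the $\nu_\rho^\bullet$) shows that $\rank_{\Z}\ev_t(\Xi) = \rank_{\Z}\Xi$ for very general $t$. It therefore suffices to prove $\rank_{\Z}\Xi \ge 18$.

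To detect the rank inside $\Gamma(T,\Qc)$, I would push $\Xi$ through $\Dc$. The operator $\Dc: \Qc \simeq \Qc_\omega \to (\O_T^{\an})^{\oplus 2}$ of Proposition~\ref{preprop1} has kernel consisting of local holomorphic functions annihilated by both $\Dc_1$ and $\Dc_2$. By separation of variables, such a function is a $\C$-linear combination of products $\phi_i(a)\psi_j(b)$ with $\phi_i,\psi_j$ fundamental solutions of the Gauss hypergeometric equation, so the kernel is a $4$-dimensional local system which, by Proposition~\ref{perprop}, is exactly $\Pc_\omega$. Hence $\Dc$ is injective on $\Qc_\omega$, so $\rank_{\Z}\Xi = \rank_{\Z}\Dc(\Xi)$, and the task reduces to exhibiting $18$ $\Z$-linearly independent vector-valued holomorphic functions among the $\Dc(\nu_\rho^\bullet)$.

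The next step is to compute these functions explicitly. Proposition~\ref{preprop5} already gives $\Dc(\nu_1^1-\nu_1^0)$ in closed form, and the parallel 2-chain analysis (swapping the roles of $1$ and $\infty$ in the construction of Section~5) should yield an analogous formula for $\Dc(\nu_1^\infty-\nu_1^0)$. To propagate these to the whole $G_{\X}$-orbit, I would use the transformation formula of Section~8: via the $G_{\X}$-linearizations $\Psi_\rho$ of $\Qc$ and $\Theta_\rho$ of $(\O_T^{\an})^{\oplus 2}$ listed in the notation table, one should obtain identities of the form $\Dc(\Psi_\rho F) = \Theta_\rho(\tau^*\Dc F)$ together with an equivariance statement $\nu_{\tr}(\rho^*\xi) = \Psi_\rho(\tau^*\nu_{\tr}(\xi))$. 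Composing these identities makes every $\Dc(\nu_\rho^\bullet)$ an explicit vector of algebraic functions in $a,b,\sqrt{a},\sqrt{b},\sqrt{1-a},\sqrt{1-b}$.

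The hard step is the ensuing linear-algebra problem. The field $\C(a,b,\sqrt{a},\sqrt{b},\sqrt{1-a},\sqrt{1-b})$ is free of rank $16$ over $\C(a,b)$, with a basis indexed by the character group $(\Z/2)^4$; each component of $\Dc(\nu_\rho^\bullet)$ decomposes accordingly into isotypic pieces, and $\Z$-independence reduces to the nonvanishing of a matrix of rational functions in $a$ and $b$. The main obstacle is therefore to organize the $G_{\X}$-orbits of $\xi_1-\xi_0$ and $\xi_\infty-\xi_0$, of cardinality on the order of $|G_{\X}|$, modulo the many relations forced by stabilizers of the $\xi_\bullet$ inside $G_{\X}$ and by the obvious identity $(\xi_\infty-\xi_0)-(\xi_\infty-\xi_1)=\xi_1-\xi_0$, and then to verify by explicit computation that $18$ surviving orbit elements remain $\Z$-linearly independent. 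Once this rank bound for $\Dc(\Xi)$ is established, the first two paragraphs give $\rank\Xi_t\ge 18$, and hence $\rank \CH^2(\widetilde{\X}_t,1)_{\ind}\ge 18$, for very general $t\in T$.
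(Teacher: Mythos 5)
Your overall architecture is the same as the paper's: pass from $\Xi_t$ to the lattice of normal functions $N=\sum_{\rho,\bullet}\Z\,\nu_{\tr}(\rho^*\xi_\bullet)\subset\Gamma(T,\Qc)$, use the countability argument of Lemma \ref{basiclem} to identify $\rank\ev_t(N)$ with $\rank N$ for very general $t$, and bound $\rank N$ from below by computing images under the Picard--Fuchs operator $\Dc$ together with the equivariance data $\Psi_\rho$, $\Theta_\rho$ of Section 8 (your claimed injectivity of $\Dc$ on $\Qc_\omega$ is not even needed: $\rank\Dc(N)\le\rank N$ holds for any group homomorphism, which is the only direction the lower bound uses). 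But there is a genuine gap: the entire content of the theorem is the number $18$, and your proposal never produces it. The decisive steps in the paper are Proposition \ref{ncanprop} and Proposition \ref{rk18}: one must (i) recover the \emph{individual} values $\Dc(\nu_\tr(\xi_0))$, $\Dc(\nu_\tr(\xi_1))$, $\Dc(\nu_\tr(\xi_\infty))$, and (ii) choose specific elements $\rho^1,\dots,\rho^6\in G_{\X}$ (lifting $(\id,\sigma)$, $\sigma\in\mathfrak{S}(\{0,1,\infty\})$), tabulate the $18$ vectors $\Dc(\nu_\tr((\rho^i)^*\xi_\bullet))$, and prove their $\Q$-independence by the factorization of the first components as $2\zeta F_1F_2$ with the six $F_1$'s independent over $\C$ and the nine $F_2$'s independent over $\Frac(B_0)$. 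You defer exactly this ("organize the orbits \dots and verify by explicit computation"), so the rank bound is reduced to an unperformed calculation rather than established.

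Two more concrete points where your sketch would need repair. First, you propose to work only with the differences $\xi_1-\xi_0$ and $\xi_\infty-\xi_0$; from these and six group elements you a priori get at most $12$ independent images, so some additional mechanism is needed to separate the individual $\nu_\tr(\xi_\bullet)$. The paper supplies it via the deck-type elements $\rho^a,\rho^b$ (e.g.\ $\rho^a$ acts trivially on $\Y$ but has $\phi_1(\tau^a)=-1$, so $(\rho^a)^*(\xi_1-\xi_0)=\xi_1+\xi_0$), which converts the single formula of Proposition \ref{preprop5} into all three individual values; your proposal contains no substitute for this step. Second, your plan to obtain $\Dc(\nu_\tr(\xi_\infty-\xi_0))$ by redoing the $2$-chain/Stokes analysis of Section 5 with $\infty$ in place of $1$ (an improper integral over an unbounded region) is plausible but is itself a nontrivial computation you do not carry out, and the paper shows it is unnecessary: everything beyond $\Dc(\nu_\tr(\xi_1-\xi_0))$ follows formally from Propositions \ref{thxprop} and \ref{thxprop2}.
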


To be more specific, 18 linearly independent cycles in $\Xi_t$ are given as follows.
Let $\rho^1,\rho^2,\dots,\rho^6\in G_{\X}$ be lifts of 
$(\id,\id)$, $(\id,(0\:\:1))$, $(\id,(1\:\:\infty))$, $(\id,(0\:\:1\:\:\infty))$, $(\id, (0\:\:\infty))$ and $(\id, (0\:\:\infty\:\:1))\in G_{\Y_0}$ by $G_{\X}\twoheadrightarrow G_{\Y_0}=\mathfrak{S}(\Sigma)^2$, respectively.
Then we will show that $(((\rho^i)^*\xi_{\bullet})_t)_\ind \:\:(i=1,2,\dots,6,\bullet = 0,1,\infty)$ are linearly independent in $\Xi_t$ for very general $t\in T$.
The key of the proof is to compute $\Dc(\nu_{\tr}((\rho^i)^*\xi_{\bullet}))$ for each $i=1,2,\dots,6$ and $\bullet = 0,1,\infty$.

Before proceeding to the proof, we explain geometric constructions of cycles in $\Xi_t$.
For $\rho = (\rho_1,\rho_2,\tau,\zeta)\in G_{\X}$, let $\D_{\rho}$ be the $(1,1)$-curve on $\P^1\times \P^1$ which passes through $(\rho_1^{-1}(0),\rho_2^{-1}(0))$, $(\rho_1^{-1}(1),\rho_2^{-1}(1))$ and $(\rho_1^{-1}(\infty),\rho_2^{-1}(\infty)) \in \Sigma^2$.
The local equation of $\D_{\rho}$ is given by $\rho_1^\sharp(x) = \rho_2^\sharp(y)$. Let $\Cc_{\rho}\subset \X_t$ be its pull-back by $\X_t\rightarrow \Y_t =\P^1\times \P^1$ and $\widetilde{\Cc}_{\rho}\subset \widetilde{\X}_t$ be the strict transform of $\Cc_{\rho}$.
By definition, $\D_{\rho}$ coincides with the inverse image of $\D$ by $\rho$,
so we have $\widetilde{\Cc}_{\rho} =\rho^{-1}_t(\widetilde{\Cc})$.
Moreover, by considering the $G_{\X}$-action on $\Sigma^2$, we have $Q_{(\rho_1^{-1}(\bullet), \rho_2^{-1}(\bullet))}=\rho_t^{-1}(Q_{(\bullet,\bullet)})$ for $\bullet = 0,1,\infty$.
Thus we see that $(\rho^*\xi_\bullet)_t=\rho_t^*((\xi_{\bullet})_{\tau(t)})\:\:(\bullet \in \{0,1,\infty\})$ is represented by the formal sum
\begin{equation}\label{geompresentation}
(\rho^*\xi_\bullet)_t = \left(\widetilde{\Cc}_{\rho}, \psi_\bullet\circ \rho_t\right) + \left(Q_{(\rho_1^{-1}(\bullet), \rho_2^{-1}(\bullet))}, \varphi_\bullet \circ \rho_t\right).
\end{equation}

For example, for $\rho^1,\rho^2,\cdots,\rho^6\in G_{\X}$ defined above, $\D_{\rho^1},\D_{\rho^2},\dots,\D_{\rho^6}$ are graphs of rational functions $z,1-z,z/(1-z),1/(1-z),1/z$ and $(z-1)/z$ on $\P^1$, respectively.
Hence $((\rho^i)^*\xi_{\bullet})_t$ are higher Chow cycles made from such graphs.

\begin{rem}\label{geomrem}
The idea of the construction of higher Chow cycles from the curve $\D_{\rho}$ is of Terasoma. The referee pointed out that the explicit 18 cycles made from the graphs of six rational functions on $\P^1$ are enough for the rank estimate.
\end{rem}

\subsection{Proof of the main theorem}
We define subgroups $N^\can$ and $N$ of $\Gamma(T,\Qc)$ as follows:
\begin{equation*}
\begin{aligned}
&N^\can = \langle \nu_{\mathrm{tr}}(\xi_0),\nu_\tr(\xi_1),\nu_{\tr}(\xi_\infty)\rangle_\Z \\
&N = \langle \nu_{\tr}(\rho^*\xi_\bullet): \rho\in G_{\X}, \bullet \in \{0,1,\infty\}\rangle_\Z.
\end{aligned}
\end{equation*}
By Proposition \ref{transregprop}, the transcendental regulator map induces a surjective map $\Xi_t\twoheadrightarrow \ev_t(N)$ for each $t\in T$.
Hence to prove Theorem \ref{mainthm}, we have to examine the rank of $N$.
To get a rank estimate for $N$, we use the following proposition, whose proof will be given in the next section.
\begin{prop}\label{thxprop}
For $\rho=(\rho_1,\rho_2,\tau,\zeta)\in G_{\X}$, let $\Theta_{\rho}: \tau^*(\O_T^\an)^{\oplus 2}\rightarrow (\O_T^\an)^{\oplus 2}$ be the morphism defined by
\begin{equation*}
\begin{tikzcd}
\begin{pmatrix}
\varphi_1 \\
\varphi_2
\end{pmatrix}
\arrow[r,mapsto]
&\begin{pmatrix}
\chi(\rho)^{-1}\cdot \phi_1(\tau)^{-2}\cdot \tau^\sharp(\varphi_1)\\
\chi(\rho)^{-1}\cdot \phi_2(\tau)^{-2}\cdot \tau^\sharp(\varphi_2)
\end{pmatrix}.
\end{tikzcd}
\end{equation*}
where $\tau^\sharp:\tau^*\O_T^\an\rightarrow \O_T^\an; \varphi\mapsto \varphi\circ\tau$. Let $\xi$ be an algebraic family of higher Chow cycles on $\widetilde{\X}\rightarrow T$.
Then we have
\begin{equation*}
\Dc(\nu_{\mathrm{tr}}(\rho^*\xi)) = \Theta_{\rho}(\Dc(\nu_{\mathrm{tr}}(\xi)))
\end{equation*}
where $\Dc:\Qc\simeq \Qc_\omega \rightarrow (\O_T^{\an})^{\oplus 2}$ is the Picard-Fuchs differential operator defined in Proposition \ref{preprop1}.
\end{prop}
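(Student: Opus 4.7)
The plan is to reduce the statement, via a change-of-variables argument, to an equivariance property of the Picard-Fuchs operator $\Dc$, namely, the transformation formula to be established in Section 8.

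The starting point is to verify that $\rho^*\omega = \chi(\rho)\cdot \omega$ as relative 2-forms on $\widetilde{\X}$. On the local chart $V$ this is a direct computation: using the formulas recorded at the end of Section 6 for the $G_{\X}$-action on $x$, $y$, $v$, combined with the identity $\rho_1^\sharp f(x) = \eta_1(\underline{\rho}_1)^{-1}(\partial \rho_1^\sharp x/\partial x)^2 f(x)$ coming from the $G_{\Y}$-linearization, the Jacobian factors and the $\eta_1$-factor cancel and leave exactly $\chi(\rho)\omega$. This covariance then translates into a relation between normal functions: if $\{\Gamma_s\}$ is a $C^\infty$-family of 2-chains associated with $\xi_s$, then $\{\rho_t^{-1}(\Gamma_{\tau(t)})\}$ is associated with $(\rho^*\xi)_t=\rho_t^*\xi_{\tau(t)}$, and the change-of-variables formula together with $\rho_t^*\omega_{\tau(t)} = \chi(\rho)(t)\omega_t$ gives
\begin{equation*}
\int_{\rho_t^{-1}\Gamma_{\tau(t)}}\omega_t \;=\; \chi(\rho)(t)^{-1}\int_{\Gamma_{\tau(t)}}\omega_{\tau(t)}.
\end{equation*}
Under the isomorphism $\Qc\simeq \Qc_\omega$ of Proposition \ref{preprop1}, Corollary \ref{basiccor} then yields
$\nu_{\tr}(\rho^*\xi) = \chi(\rho)^{-1}\cdot \tau^\sharp\nu_{\tr}(\xi)$.

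Next, apply $\Dc_1$ (the $\Dc_2$ case being symmetric). Write $\chi(\rho) = \zeta\cdot \phi_1(\tau)\cdot \phi_2(\tau)$; here $\zeta$ is a constant and $\phi_2(\tau)$ depends only on $b$, so $\zeta^{-1}\phi_2(\tau)^{-1}$ commutes with $\Dc_1 = a(1-a)\partial^2/\partial a^2 + (1-2a)\partial/\partial a - 1/4$. Consequently, for any local lift $L$ of $\nu_{\tr}(\xi)$,
\begin{equation*}
\Dc_1\bigl(\chi(\rho)^{-1}\tau^\sharp L\bigr) \;=\; \zeta^{-1}\phi_2(\tau)^{-1}\,\Dc_1\bigl(\phi_1(\tau)^{-1}\tau^\sharp L\bigr).
\end{equation*}
The transformation formula from Section 8 will assert the identity
\begin{equation*}
\Dc_1\bigl(\phi_1(\tau)^{-1}\tau^\sharp L\bigr) \;=\; \phi_1(\tau)^{-3}\,\tau^\sharp(\Dc_1 L),
\end{equation*}
and substituting it produces exactly the first component of $\Theta_\rho(\Dc(\nu_{\tr}(\xi)))$.

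The main obstacle is this transformation formula itself: it encodes the equivariance of the Gauss hypergeometric operator of type $(\tfrac{1}{2},\tfrac{1}{2},1)$ under the $\mathfrak{S}_3$-action on $a$, precisely twisted by the algebraic normalization $\sqrt{a}\sqrt{1-a}/(a^2-a+1)$ built into $\phi_1(\tau)$. Its verification should amount to a careful calculation using the 24 Kummer-type transformations of the hypergeometric solutions (or equivalently, a case-by-case check guided by Table \ref{psgroupisomtable} and Proposition \ref{chikeisan}), and is deferred to Section 8. Everything else in the present argument is routine unwinding of the covariance $\rho^*\omega = \chi(\rho)\omega$ and of the decomposition $\chi = \zeta\phi_1\phi_2$; the content of Proposition \ref{thxprop} is the clean packaging of these ingredients together with the operator identity above.
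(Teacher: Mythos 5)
Your proposal is correct and takes essentially the same route as the paper: the covariance $\rho^*\omega=\chi(\rho)\cdot\omega$ plus the chain-transport identity $\int_{\rho_t^{-1}\Gamma_{\tau(t)}}\omega_t=\chi(\rho)(t)^{-1}\int_{\Gamma_{\tau(t)}}\omega_{\tau(t)}$ giving $\nu_{\tr}(\rho^*\xi)=\chi(\rho)^{-1}\tau^\sharp\nu_{\tr}(\xi)$ is exactly Proposition \ref{thxprop2}, and your deferred operator identity $\Dc_1\bigl(\phi_1(\tau)^{-1}\tau^\sharp L\bigr)=\phi_1(\tau)^{-3}\tau^\sharp(\Dc_1 L)$, together with the decomposition $\chi(\rho)=\zeta\phi_1(\tau)\phi_2(\tau)$ and the commutation of $\zeta\phi_2(\tau)$ with $\Dc_1$, is precisely Proposition \ref{transformationformulae} combined with the relation $\Dc_i^\tau\circ\tau^\sharp=\tau^\sharp\circ\Dc_i$, which the paper verifies by the same case-by-case check (using that $\phi_1(\tau)$ is $\pm1$, $\pm\sqrt{-1}\sqrt{1-a}$ or $\pm\sqrt{-1}\sqrt{a}$) that you indicate.
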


Thanks to this proposition, we can compute $\Dc(N)$ explicitly and we get the desired rank estimate for $N$.
First, we compute $\Dc(N^\can)$.

\begin{prop}\label{ncanprop}
The images of $\nu_\tr(\xi_0),\nu_\tr(\xi_1),\nu_\tr(\xi_\infty)$ under the Picard-Fuchs differential operator $\Dc$ are as follows:
\begin{equation*}
\Dc(\nu_\tr(\xi_0)) = \frac{2}{a-b}
\begin{pmatrix}
1 \\[1.5ex]
-1
\end{pmatrix}
,\:\:
\Dc(\nu_\tr(\xi_1))= \frac{2}{a-b}
\begin{pmatrix}
\frac{\sqrt{1-b}}{\sqrt{1-a}}\\[1.5ex]
-\frac{\sqrt{1-a}}{\sqrt{1-b}}
\end{pmatrix}
,\:\:
\Dc(\nu_\tr(\xi_\infty)) = \frac{2}{a-b}
\begin{pmatrix}
\frac{\sqrt{b}}{\sqrt{a}} \\[1.5ex]
-\frac{\sqrt{a}}{\sqrt{b}}
\end{pmatrix}.
\end{equation*}
In particular, $\rank N^\can = 3$.
\end{prop}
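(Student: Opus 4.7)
The plan is to establish the three formulas by a direct integration argument in the spirit of Proposition~\ref{preprop5}, and then to deduce the rank assertion from elementary linear algebra. Since $\mathcal{D}$ factors through $\mathcal{Q}_\omega$, each $\mathcal{D}(\nu_{\mathrm{tr}}(\xi_\bullet))$ is a well-defined element of $(\mathcal{O}_T^{\mathrm{an}})^{\oplus 2}$, so it suffices to compute $\mathcal{D}_i\!\left(\int_{\Lambda_{\bullet,t}} \omega_t\right)$ for any smoothly varying family of 2-chains $\{\Lambda_{\bullet,t}\}_{t\in T}$ associated with $(\xi_\bullet)_t$ in the weak sense.

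I would begin with $\xi_0$. Its associated 1-cycle on $\widetilde{\mathcal{X}}_t$ consists of the path $\widetilde{\gamma}_c^0 = \psi_0^{-1}([\infty, 0])$ on $\widetilde{\mathcal{C}}$, running from $p_0^+ = (0, 0, 1)$ to $p_0^- = (0, 0, -1)$, together with the return path $\widetilde{\gamma}_{00} = \varphi_0^{-1}([\infty, 0])$ on $Q_{(0,0)}$ (using $\varphi_0 = \psi_0^{-1}$). A natural 2-chain $\Lambda_{0,t}$ bounding this cycle can be built by taking the image under the map~(\ref{Deltamap}) of a neighborhood of the corner of $\triangle$ near the origin and attaching a collar on the exceptional divisor $Q_{(0,0)}$. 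Differentiating $\int_{\Lambda_{0,t}}\omega_t$ under the integral sign and applying Stokes' theorem with the identity $\mathcal{D}_1\!\left(1/\sqrt{x(1-x)(1-ax)}\right) = \partial H(a,x)/\partial x$ from the proof of Proposition~\ref{preprop5}, the boundary contributions along the branching locus of $\widetilde{\mathcal{X}}_t \to \widetilde{\mathcal{Y}}_t$ vanish, leaving only a 1-dimensional integral on the portion of $\partial \Lambda_{0,t}$ lying on $\widetilde{\mathcal{C}}$. After a coordinate change analogous to $u = \sqrt{1 - bz}/\sqrt{1 - az}$ from the proof of Proposition~\ref{preprop5}, this residual integral evaluates to $\frac{2}{a-b}$; the parallel computation for $\mathcal{D}_2$ yields $-\frac{2}{a-b}$.

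The computations of $\mathcal{D}(\nu_{\mathrm{tr}}(\xi_1))$ and $\mathcal{D}(\nu_{\mathrm{tr}}(\xi_\infty))$ proceed identically with the 2-chain localized near $(1,1)$ or near $(\infty,\infty)$ respectively; the only change is in the boundary of the surviving one-dimensional integral, whose evaluation via coordinate changes adapted to $x = 1$ or $x = \infty$ produces the factors $\sqrt{1-b}/\sqrt{1-a}$ or $\sqrt{b}/\sqrt{a}$ in the first component, with the reciprocal and opposite sign in the second. As a consistency check, the difference $\mathcal{D}(\nu_{\mathrm{tr}}(\xi_1)) - \mathcal{D}(\nu_{\mathrm{tr}}(\xi_0))$ reproduces the value given by Proposition~\ref{preprop5}; alternatively, the two remaining values can be obtained from $\mathcal{D}(\nu_{\mathrm{tr}}(\xi_0))$ by applying Proposition~\ref{thxprop} to elements of $G_{\mathcal{X}}$ with underlying permutations $((0\;1),(0\;1))$ and $((0\;\infty),(0\;\infty))$, once one verifies that $\rho^*\xi_0$ differs from $\xi_1$ (respectively $\xi_\infty$) by a decomposable cycle. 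For the rank assertion, after factoring out the common scalar $\frac{2}{a-b}$ it suffices to show that the three first components $1$, $\sqrt{(1-b)/(1-a)}$, and $\sqrt{b/a}$ are $\mathbb{Q}$-linearly independent as sections of $\mathcal{O}_T^{\mathrm{an}}$; this is immediate because any nontrivial $\mathbb{Q}$-linear relation would yield an algebraic dependence among these distinct quadratic irrationalities over $\mathbb{C}(a,b)$ that fails at generic points of $T$.

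The main obstacle I anticipate is the first step: constructing a suitable 2-chain for $\xi_0$ alone, as opposed to the difference $\xi_1 - \xi_0$ handled in Proposition~\ref{preprop5}, and checking that after Stokes the boundary contributions from the exceptional divisor $Q_{(0,0)}$ and from the branching locus cancel appropriately so as to isolate the stated constant. Once this base case is settled, the computations for $\xi_1$ and $\xi_\infty$ are parallel and the rank assertion is elementary.
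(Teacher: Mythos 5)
There is a genuine gap, and it is exactly at the point you flag as the ``main obstacle'': the base case $\Dc(\nu_\tr(\xi_0))$. The 1-cycle associated with $(\xi_0)_t$ is the loop formed by a path on $\widetilde{\Cc}$ from $p_0^+=(0,0,1)$ to $p_0^-=(0,0,-1)$ together with a return path on $Q_{(0,0)}$; this loop generates $H_1$ of (a tubular neighborhood of) $\widetilde{\Cc}\cup Q_{(0,0)}$, which is two copies of $\P^1$ meeting at the two points $p_0^{\pm}$. Hence no 2-chain bounding it, even in the weak sense, can be localized near $Q_{(0,0)}$ and the corner of $\triangle$: any such chain is necessarily global. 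The chain you sketch (the image under (\ref{Deltamap}) of a corner neighborhood of $\triangle$ plus a collar on $Q_{(0,0)}$) has, besides pieces on $\widetilde{\Cc}$, $Q_{(0,0)}$ and the branching locus, a free interior boundary arc lying on no algebraic curve, so it is not a 2-chain associated with $\xi_0$ in the weak sense, and Stokes' theorem then produces an uncontrolled boundary term along that arc; the claimed value $\tfrac{2}{a-b}$ is asserted rather than derived. (Note also that in the computation of Proposition \ref{preprop5} the inhomogeneous term arises from an integral over the \emph{whole} of $\gamma_{c,+}$, i.e.\ the full diagonal segment from $(1,1)$ to $(0,0)$, not from a local contribution at the corner, so there is no reason to expect a ``local residue'' picture.) Your fallback route---transporting $\Dc(\nu_\tr(\xi_0))$ to $\Dc(\nu_\tr(\xi_1))$ and $\Dc(\nu_\tr(\xi_\infty))$ via Proposition \ref{thxprop} applied to lifts of $((0\:1),(0\:1))$ and $((0\:\infty),(0\:\infty))$---is correct as far as it goes, but it is circular for the present purpose: it still requires the unknown base value, and the only quantity computed directly in the paper is $\Dc(\nu_\tr(\xi_1-\xi_0))$; the consistency check against that difference cannot separate $\xi_0$ from $\xi_1$.

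The missing idea is the one the paper uses: apply group elements acting nontrivially on the base to the \emph{known} difference. Take $\rho^a=(\id,\id,\tau^a,1)$ with $\tau^a$ flipping only the sign of $\sqrt{1-a}$ (so $\phi_1(\tau^a)=-1$, $\phi_2(\tau^a)=1$), and $\rho^b$ lifting $((1\:\infty),(1\:\infty))$ with $\tau^b$ exchanging $\sqrt{a}\leftrightarrow\sqrt{1-a}$ and $\sqrt{b}\leftrightarrow\sqrt{1-b}$. Both stabilize $\widetilde{\Cc}$, and the local description of the $G_{\X}$-action gives $(\rho^a)^*(\xi_1-\xi_0)=\xi_1+\xi_0$ and $(\rho^b)^*(\xi_1-\xi_0)=\xi_0-\xi_\infty$. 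Applying $\Theta_{\rho^a}$ and $\Theta_{\rho^b}$ (Proposition \ref{thxprop}) to the vector of Proposition \ref{preprop5} yields $\Dc(\nu_\tr(\xi_0+\xi_1))$ and $\Dc(\nu_\tr(\xi_0-\xi_\infty))$, and elementary linear algebra then gives the three stated vectors; no new integral is needed. Your final linear-independence argument for $\rank N^{\can}=3$ is fine. If you insist on a direct-integration proof, you would first have to construct an explicit \emph{global} 2-chain bounding the loop on $\widetilde{\Cc}\cup Q_{(0,0)}$ (analogous to, but different from, $K_+-K_-$), which is substantially more work than the group-theoretic reduction.
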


\begin{proof}
The key for the proof is to find $\rho^a,\rho^b\in G_{\X}$ such that 
\begin{equation}\label{rhoabaction}
(\rho^a)^*(\xi_1-\xi_0) = \xi_1+\xi_0,\quad (\rho^b)^*(\xi_1-\xi_0) = \xi_0-\xi_\infty.
\end{equation}
We consider the following elements.
\begin{enumerate}
\renewcommand{\labelenumi}{(\alph{enumi})}
\item $\rho^a = (\id,\id,\tau^a,1)$ where $\tau^a\in G_T$ satisfies $(\tau^a)^\sharp\left(\sqrt{a}\right)=\sqrt{a}$, \\
$(\tau^a)^\sharp\left(\sqrt{1-a}\right)=-\sqrt{1-a}$, $(\tau^a)^\sharp\left(\sqrt{b}\right)=\sqrt{b}$ and $(\tau^a)^\sharp\left(\sqrt{1-b}\right)=\sqrt{1-b}$.\\
Then we have $\phi_1(\tau^a) = -1$ and $\phi_2(\tau^b)=1$.
\item $\rho^b = ((1\:\:\:\infty),(1\:\:\:\infty),\tau^b,1)$ where $\tau^b\in G_T$ satisfies $(\tau^b)^\sharp\left(\sqrt{a}\right)=\sqrt{1-a}$, \\
$(\tau^b)^\sharp\left(\sqrt{1-a}\right)=\sqrt{a}$, $(\tau^b)^\sharp\left(\sqrt{b}\right)=\sqrt{1-b}$ and $(\tau^b)^\sharp\left(\sqrt{1-b}\right)=\sqrt{b}$.\\
Then we have $\phi_1(\tau^b)=1$ and $\phi_2(\tau^b)=1$. 
\end{enumerate}
These elements stabilize the curve $\widetilde{\Cc}$. By the local description of the $G_{\X}$-action in the end of Section 6, we see that these elements satisfy (\ref{rhoabaction}).

By Proposition \ref{thxprop} and (\ref{rhoabaction}), we can compute $\Dc(\nu_\tr(\xi_0+\xi_1))$ and $\Dc(\nu_\tr(\xi_0-\xi_\infty))$ from Proposition \ref{preprop5} as follows:
\begin{equation*}
\begin{aligned}
&\Dc(\nu_\tr(\xi_0+\xi_1)) =\Theta_{\rho^a}\left(\Dc(\nu_{tr}(\xi_1-\xi_0))\right) = \frac{2}{a-b}
\begin{pmatrix}
1+ \frac{\sqrt{1-b}}{\sqrt{1-a}} \\[1.5ex]
-1-\frac{\sqrt{1-a}}{\sqrt{1-b}}
\end{pmatrix}
\\
&\Dc(\nu_\tr(\xi_0-\xi_\infty))= \Theta_{\rho^b}\left(\Dc(\nu_{tr}(\xi_1-\xi_0))\right)  = \frac{2}{(1-a)-(1-b)}
\begin{pmatrix}
\frac{\sqrt{b}}{\sqrt{a}}-1 \\[1.5ex]
1-\frac{\sqrt{a}}{\sqrt{b}}
\end{pmatrix}.
\end{aligned}
\end{equation*}
Hence the images of $\nu_\tr(\xi_0),\nu_\tr(\xi_1)$ and $\nu_\tr(\xi_\infty)$ under the Picard-Fuchs differential operator are as in the statement. Since they are linearly independent over $\Q$, the latter part follows from these expressions.
\end{proof}

Next, we give an estimate for the rank of $N$. 
The proof below was simplified by advice from Terasoma. Furthermore, Sreekantan pointed out an error in the Table \ref{basisofimage} in the previous version of this paper.
\begin{prop}\label{rk18}
$\rank N\ge 18$.
\end{prop}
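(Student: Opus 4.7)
The plan is to compute $\Dc(\nu_\tr((\rho^i)^*\xi_\bullet))$ for all $18$ pairs $(i,\bullet) \in \{1,\ldots,6\} \times \{0,1,\infty\}$, verify their $\Z$-linear independence in $(\O_T^\an)^{\oplus 2}$, and then transfer this back to $N$. Since $\Dc: \Qc \to (\O_T^\an)^{\oplus 2}$ is a group homomorphism with torsion-free target, $\Z$-independence of the images is inherited by the $\nu_\tr((\rho^i)^*\xi_\bullet)$ in $N$, giving $\rank N \ge 18$.

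To perform the computation I would combine Proposition \ref{thxprop}, which reduces matters to $\Theta_{\rho^i}(\Dc(\nu_\tr(\xi_\bullet)))$, with Proposition \ref{ncanprop}, which supplies the three base vectors explicitly. The scalars $\chi(\rho^i)^{-1} \phi_j(\tau^i)^{-2}$ appearing in $\Theta_{\rho^i}$ come from Proposition \ref{chikeisan} and the definitions of $\phi_1, \phi_2, \chi$. Since every $\rho^i$ is chosen to lift an element of the form $(\id, \underline{\rho^i_2}) \in \mathfrak{S}(\{0,1,\infty\})^2$, the projection $\tau^i$ fixes $a$ while sending $b$ through the six elements of the $\mathfrak{S}_3$-orbit $\{b,\, b/(b-1),\, 1-b,\, (b-1)/b,\, 1/b,\, 1/(1-b)\}$, which keeps the $18$ resulting vectors amenable to direct tabulation.

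For the independence check, the entries of the tabulated vectors lie in the degree-$16$ extension $\C(a,b)[\sqrt{a}, \sqrt{1-a}, \sqrt{b}, \sqrt{1-b}]$ of $\C(a,b)$. I would expand any hypothetical $\Z$-linear relation in a fixed basis of monomials in the four square roots and compare coefficients. Within each fixed $i$, the three choices of $\bullet$ produce vectors with distinct square-root profiles inherited from Proposition \ref{ncanprop}, namely $(1,-1)$, $(\sqrt{1-b}/\sqrt{1-a},\, -\sqrt{1-a}/\sqrt{1-b})$, and $(\sqrt{b}/\sqrt{a},\, -\sqrt{a}/\sqrt{b})$ (each transformed by $\tau^i$ and rescaled by $\chi(\rho^i)^{-1}\phi_j(\tau^i)^{-2}$), so they are independent. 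Across different values of $i$, the six distinct $\mathfrak{S}_3$-transforms of $b$ produce denominators $2/(a - \tau^i(b))$ that are pairwise non-proportional over $\C(a,b)$, so the six $3$-blocks are themselves independent of one another.

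The main obstacle is keeping the bookkeeping clean without losing sight of the structure; a table organized by $i$, with columns for $\bullet = 0,1,\infty$, makes both the intrablock and interblock independence verifiable by inspection of leading denominators and square-root types. Care is needed to track the sign ambiguities in $\phi_j(\tau^i)$ coming from the choice of lift $\tau^i \in G_T$ of each element of $\mathfrak{S}(\{0,1,\infty\})$, but these are overall scalars that do not affect $\Z$-independence. Once the $18$ rows are shown to be $\Z$-linearly independent in $(\O_T^\an)^{\oplus 2}$, one concludes $\rank \Dc(N) \ge 18$, and hence $\rank N \ge 18$.
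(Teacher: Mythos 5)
Your proposal follows the paper's proof essentially verbatim: the same six lifts $\rho^1,\dots,\rho^6$, the same reduction via Propositions \ref{thxprop} and \ref{ncanprop} to an explicit table of $18$ vectors, and the same independence argument that separates the rational-function factor from the square-root factor before transferring independence back from $\Dc(N)$ to $N$. The only caveat is that for the interblock step one should invoke the $\C$-linear independence of the six rational functions in (\ref{F1}) (as the paper does), not merely their pairwise non-proportionality; with your square-root-monomial expansion this strengthening is immediate, so the argument goes through.
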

\begin{proof}
Let $\rho^1,\rho^2,\dots,\rho^6 \in G_{\X}$ be elements defined after Theorem \ref{mainthm}.
Using Proposition \ref{thxprop} and Proposition \ref{ncanprop}, we can compute $\Dc(\nu_\tr((\rho^i)^*\xi_\bullet))$ for $i = 1,\dots, 6$ and $\bullet = 0,1,\infty$ as Table \ref{basisofimage}. 
Note that this table contains the ambiguity of signs because we do not fix the $G_{T}$-components of $\rho^i$.

{\small 
\begin{table}[h]
\caption{The images $\nu_\tr((\rho^i)^*\xi_\bullet)$ under the Picard-Fuch differential operator $\Dc$}
\label{basisofimage}
{\renewcommand\arraystretch{1.5}
\begin{tabular}{c|c}
its image in $G_{\Y_0}$ & $\Dc(\nu_\tr((\rho^i)^*\xi_0)), \quad \Dc(\nu_\tr((\rho^i)^*\xi_1)), \quad\Dc(\nu_\tr((\rho^i)^*\xi_\infty))$  \\
\hline 
&\\
$(\id,\id)$ & 
$\pm \dfrac{2}{a-b}
\begin{pmatrix}
1 \\[1.5ex]
-1
\end{pmatrix}
$
,
$\pm\dfrac{2}{a-b}
\begin{pmatrix}
\dfrac{\sqrt{1-b}}{\sqrt{1-a}} \\[1.5ex]
-\dfrac{\sqrt{1-a}}{\sqrt{1-b}}
\end{pmatrix}$
,
$\pm\dfrac{2}{a-b}
\begin{pmatrix}
\dfrac{\sqrt{b}}{\sqrt{a}}  \\[1.5ex]
-\dfrac{\sqrt{a}}{\sqrt{b}} 
\end{pmatrix}$
\\
&
\\
$(\id,(0\:1))$ &
$\pm \dfrac{2}{ab-a-b}
\begin{pmatrix}
\sqrt{1-b} \\[1.5ex]
\dfrac{1}{\sqrt{1-b}}
\end{pmatrix}
$,
$\pm \dfrac{2}{ab-a-b}
\begin{pmatrix}
\dfrac{1}{\sqrt{1-a}} \\[1.5ex]
\sqrt{1-a}
\end{pmatrix}$
,
$\pm \dfrac{2\sqrt{-1}}{ab-a-b}
\begin{pmatrix}
\dfrac{\sqrt{b}}{\sqrt{a}}  \\[1.5ex]
-\dfrac{\sqrt{a}}{\sqrt{b}} 
\end{pmatrix}$
\\
&
\\
$(\id,(1\:\infty))$ &
$\pm \dfrac{2\sqrt{-1}}{a+b-1}
\begin{pmatrix}
1 \\[1.5ex]
-1
\end{pmatrix}
$,
$\pm \dfrac{2\sqrt{-1}}{a+b-1}
\begin{pmatrix}
\dfrac{\sqrt{b}}{\sqrt{1-a}} \\[1.5ex]
-\dfrac{\sqrt{1-a}}{\sqrt{b}}
\end{pmatrix}$,
$\pm \dfrac{2\sqrt{-1}}{a+b-1}
\begin{pmatrix}
\dfrac{\sqrt{1-b}}{\sqrt{a}}  \\[1.5ex]
-\dfrac{\sqrt{a}}{\sqrt{1-b}} 
\end{pmatrix}$
\\
&
\\

$(\id,(0\:1\:\infty))$ &
$\pm \dfrac{2\sqrt{-1}}{a-ab-1}
\begin{pmatrix}
\sqrt{1-b} \\[1.5ex]
\dfrac{1}{\sqrt{1-b}}
\end{pmatrix}
$,
$\pm\dfrac{2}{a-ab-1}
\begin{pmatrix}
\dfrac{\sqrt{b}}{\sqrt{1-a}}  \\[1.5ex]
-\dfrac{\sqrt{1-a}}{\sqrt{b}} 
\end{pmatrix}$
,
$\pm\dfrac{2\sqrt{-1}}{a-ab-1}
\begin{pmatrix}
\dfrac{1}{\sqrt{a}} \\[1.5ex]
\sqrt{a}
\end{pmatrix}$
\\
&
\\
$(\id,(0\:\infty))$ &
$\pm \dfrac{2}{ab-1}
\begin{pmatrix}
\sqrt{b} \\[1.5ex]
\dfrac{1}{\sqrt{b}}
\end{pmatrix}
$
,
$\pm \dfrac{2\sqrt{-1}}{ab-1}
\begin{pmatrix}
\dfrac{\sqrt{1-b}}{\sqrt{1-a}}  \\[1.5ex]
-\dfrac{\sqrt{1-a}}{\sqrt{1-b}} 
\end{pmatrix}$
,
$\pm \dfrac{2}{ab-1}
\begin{pmatrix}
\dfrac{1}{\sqrt{a}} \\[1.5ex]
\sqrt{a}
\end{pmatrix}$
\\
&
\\
$(\id,(0\:\infty\:1))$ &
$\pm \dfrac{2\sqrt{-1}}{ab-b+1}
\begin{pmatrix}
\sqrt{b} \\[1.5ex]
\dfrac{1}{\sqrt{b}}
\end{pmatrix}
$,
$\pm \dfrac{2\sqrt{-1}}{ab-b+1}
\begin{pmatrix}
\dfrac{1}{\sqrt{1-a}} \\[1.5ex]
\sqrt{1-a}
\end{pmatrix}$,
$\pm \dfrac{2}{ab-b+1}
\begin{pmatrix}
\dfrac{\sqrt{1-b}}{\sqrt{a}}  \\[1.5ex]
-\dfrac{\sqrt{a}}{\sqrt{1-b}} 
\end{pmatrix}$

\\
\end{tabular}
}
\end{table}
}

We will show that the vectors in Table \ref{basisofimage} are linearly independent over $\Q$. It is enough to show that the first components of these vectors are linearly independent over $\C$. Note that the first components of these vectors are expressed by 
\begin{equation*}
2\zeta \cdot F_1\cdot F_2
\end{equation*}
where $\zeta\in\mu_4$, $F_1$ is either
\begin{equation}\label{F1}
\dfrac{1}{a-b},\:\:\dfrac{1}{a+b-1},\:\:\dfrac{1}{ab-a-b},\:\:\dfrac{1}{ab-b+1},\:\:\dfrac{1}{ab-1}\text{ or }\dfrac{1}{a-ab-1} \in \Frac(B_0)
\end{equation}
and $F_2$ is either
\begin{equation}\label{F2}
1,\:\dfrac{\sqrt{b}}{\sqrt{a}},\:\dfrac{\sqrt{1-b}}{\sqrt{1-a}},\:\dfrac{\sqrt{1-b}}{\sqrt{a}},\:\dfrac{\sqrt{b}}{\sqrt{1-a}},\:\dfrac{1}{\sqrt{1-a}},\: \sqrt{1-b},\:\dfrac{1}{\sqrt{a}}\text{ or }\sqrt{b} \in \Frac(B).
\end{equation}
Since elements in (\ref{F1}) are linearly independent over $\C$ and elements in (\ref{F2}) are linearly independent over $\Frac(B_0)$, their products are linearly independent over $\C$. Hence we have the result.
\end{proof}
Then we can prove the main theorem as follows.
\begin{proof}[(Proof of Theorem \ref{mainthm})] 
Since $N$ is countable, by Lemma \ref{basiclem}, $N\twoheadrightarrow \ev_t(N)$ is bijective for very general $t\in T$. By Proposition \ref{rk18}, $\rank \ev_t(N)\ge 18$ for such $t$. 
Since $\ev_t(\nu_{\tr}(\rho^*\xi_\bullet))=r((\rho^*\xi_\bullet)_t)$, $\ev_t(N)$ is generated by the images of $(\rho^*\xi_\bullet)_t\:(\rho\in G_{\X},\bullet \in\{ 0,1,\infty\})$ under the transcendental regulator map.
By Proposition \ref{transregprop}, the transcendental regulator map induces the surjective map $\Xi_t\twoheadrightarrow \ev_t(N)$.
Hence we have $\rank \Xi_t \ge \rank \ev_t(N)\ge 18$ for very general $t\in T$.
\end{proof}
\subsection{Precise rank of $N$}
\begin{prop}\label{rkex18}
$\rank N = 18$.
\end{prop}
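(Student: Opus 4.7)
Combined with Proposition~\ref{rk18}, it suffices to show $\rank N \leq 18$. The strategy is to verify (i) that the Picard-Fuchs operator $\Dc \colon \Qc \to (\O_T^{\an})^{\oplus 2}$ is injective, so that $\rank N = \rank \Dc(N)$, and (ii) that $\Dc(N)$ is contained in the $18$-dimensional $\Q$-subspace $W \subset (\O_T^{\an})^{\oplus 2}$ spanned by the entries of Table~\ref{basisofimage}.

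For (i), a class in $\ker \Dc \subset \Qc = \O_T^{\an}/\Pc_\omega$ is represented by a local holomorphic function annihilated by both $\Dc_1$ and $\Dc_2$. Since each $\Dc_i$ is the Gauss hypergeometric operator in one variable, with two-dimensional solution space spanned by $P_1, P_2$, separation of variables shows that the joint kernel in $\O_T^{\an}$ is spanned by the four products $P_i(a)P_j(b)$; by Proposition~\ref{perprop} this coincides with $\Pc_\omega$. Hence $\Dc$ is injective on $\Qc$, so in particular on $N$.

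For (ii), Proposition~\ref{thxprop} gives $\Dc(\nu_\tr(\rho^*\xi_\bullet)) = \Theta_\rho(v_\bullet)$ with $v_\bullet = \Dc(\nu_\tr(\xi_\bullet))$, and $\Theta_\rho$ depends only on $(\tau, \zeta)$. Since $v_0, v_1, v_\infty$ already lie in $W$ (the first row of Table~\ref{basisofimage}), it suffices to show $\Theta_\rho(W) \subseteq W$ for all $\rho \in G_\X$. This reduces to a finite case analysis over $\underline{\tau} = (\underline{\tau}_1, \underline{\tau}_2) \in \mathfrak{S}_3 \times \mathfrak{S}_3$ and $\bullet \in \{0, 1, \infty\}$. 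The crucial algebraic mechanism is a cancellation: when $\underline{\tau}_1$ is non-trivial, $\tau^\sharp(1/(a-b))$ acquires an extra rational factor (for instance $(1-a)/(ab-a-b)$ when $\underline{\tau}_1^\sharp(a) = a/(a-1)$), which is precisely cancelled by the factor $\phi_1(\tau)^{-2}$ in $c_1 = \chi(\rho)^{-1}\phi_1(\tau)^{-2}$, by virtue of the identity~(\ref{sqrteqn}). After this cancellation, the coefficients of $\Theta_\rho(v_\bullet)$ lie in the lists (\ref{F1}) and (\ref{F2}), and a direct comparison identifies the result as one of the 18 table entries, up to a factor in $\mu_4$. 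For a sample check: with $\underline{\tau}_1^\sharp(a) = a/(a-1)$, $\underline{\tau}_2 = \id$ and $\bullet = 0$, the computation produces $\mp (2/(ab-a-b))(1/\sqrt{1-a},\, \sqrt{1-a})$, matching the Row~2, Column~2 entry of Table~\ref{basisofimage}.

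The main obstacle is the systematic bookkeeping of the $108$ cases. While each individual reduction is a short mechanical calculation driven by the cancellation above, one must carefully track the sign ambiguities in $\phi_1(\tau), \phi_2(\tau), \zeta$ and identify the specific row and column of Table~\ref{basisofimage} that each $\Theta_\rho(v_\bullet)$ lands in. Conceptually, the argument shows that the $G_\X$-orbit of $\{v_0, v_1, v_\infty\}$ under $\Theta$ coincides, modulo $\mu_4$, with the 18 entries of Table~\ref{basisofimage}, each entry being hit by exactly 6 of the 108 cases.
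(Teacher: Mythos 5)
Your step (i) is where the argument breaks. The kernel of $\Dc$ on $\O_T^\an$ is indeed, by the separation-of-variables argument you give, the \emph{four-dimensional $\C$-vector space} spanned locally by the products $P_i(a)P_j(b)$. But $\Pc_\omega$ is not that $\C$-span: Proposition \ref{perprop} says that the functions $2P_i(a)P_j(b)$ are a local basis of the \emph{local system} $\Pc_\omega$, i.e.\ $\Pc_\omega$ is the rank-4 $\Z$-lattice they generate (this is exactly how it is used in the proof of Lemma \ref{basiclem}, where a free basis and integer coefficients $\underline{c}\in\Z^r$ are taken). Hence the kernel of the induced map $\Dc:\Qc_\omega\rightarrow(\O_T^\an)^{\oplus 2}$ is $(\C\text{-span of }P_iP_j)/\Pc_\omega$, which is nonzero (indeed of infinite rank as an abelian group), so $\Dc$ is not injective on $\Qc$. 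Consequently the identity $\rank N=\rank\Dc(N)$ is unjustified: your step (ii) only yields $\rank\Dc(N)\le 18$, and without showing that $N\cap\Ker\Dc$ has rank $0$ you get no upper bound on $\rank N$ at all. Controlling $N\cap\Ker\Dc$ is not obvious (a priori an integral combination of the $\nu_\tr(\rho^*\xi_\bullet)$ could be represented by a $\C$-linear, non-integral, combination of periods), and nothing in your sketch addresses it.

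This is precisely the difficulty the paper's proof avoids: it never uses $\Dc$ for the upper bound, but instead uses the $G_{\X}$-linearization $(\Psi_\rho)$ of $\Qc$ to make $N$ a $G_{\X}$-module, observes that $N^\can$ (of rank 3) is stable under the subgroup $HI$ with $H=\Ker(G_{\X}\rightarrow G_T)$ acting by $\pm1$ and $I$ stabilizing $\widetilde{\Cc}$, and then bounds $\rank N\le\rank\mathrm{Ind}_{HI}^{G_{\X}}N^\can=3\cdot|G_{\X}/HI|=6\cdot 3=18$. Your step (ii) is plausible in content (your sample computation with $\tau^\sharp(a)=a/(a-1)$ is correct, and the claim that every $\Theta_\rho(v_\bullet)$ is a $\mu_4$-multiple of a Table \ref{basisofimage} entry is in fact a consequence of the same coset structure $|G_{\X}/HI|=6$), but as written it is only a one-case sample of a 108-case verification, and in any event completing it would still only bound $\rank\Dc(N)$, not $\rank N$. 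To salvage your route you would either have to prove $\rank(N\cap\Ker\Dc)=0$ separately, or abandon the factorization through $\Dc$ and argue on $\Qc$ itself, which essentially leads back to the paper's module-theoretic argument.
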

To prove this, we use the following proposition which we prove in Section 8.
\begin{prop}\label{thxprop2}
\begin{enumerate}
\item For $\rho=(\rho_1,\rho_2,\tau,\zeta)\in G_{\X}$, let $\Psi_{\rho}:\tau^*\O_T^\an \rightarrow \O_T^\an$ be the $\O_T^\an$-module isomorphism defined by 
\begin{equation*}
\begin{tikzcd}
\varphi \arrow[r,mapsto]  & \chi(\rho)^{-1}\cdot \tau^\sharp(\varphi).
\end{tikzcd}
\end{equation*}
Then $(\Psi_{\rho})_{\rho\in G_{\X}}$ defines a $G_{\X}$-linearization on $\O_T^\an$. 
Furthermore, $(\Psi_{\rho})_{\rho\in G_{\X}}$ induces a $G_{\X}$-linearization on $\Qc_\omega = \O_T^\an/\Pc_\omega$. 
Under the isomorphism $\Qc_\omega\simeq \Qc$, $(\Psi_{\rho})_{\rho\in G_{\X}}$ defines a $G_{\X}$-linearization on $\Qc$.
\item Let $\xi$ be an algebraic family of higher Chow cycles on $\widetilde{\X}\rightarrow T$. For any $\rho\in G_{\X}$, we have 
\begin{equation*}
\nu_{\tr}(\rho^*\xi) = \Psi_{\rho}(\nu_{\tr}(\xi)).
\end{equation*}
\end{enumerate}
\end{prop}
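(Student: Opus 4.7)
The plan is to prove the two parts in sequence, with the central computation being the transformation law $\rho^{*}\omega = \chi(\rho)\cdot \omega$ for the relative $2$-form $\omega = \frac{dx\wedge dy}{vf(x)}$ of Proposition~\ref{preprop1} under any $\rho\in G_{\X}$. Once this identity is in hand, both halves of the proposition become bookkeeping.

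For part (1), I would first verify that $(\Psi_{\rho})_{\rho\in G_{\X}}$ satisfies the cocycle condition $\Psi_{gh} = \Psi_{h}\circ h^{*}(\Psi_{g})$ of a linearization of $\O_{T}^{\an}$. Unwinding definitions, this reduces to the identity
\begin{equation*}
\chi(gh)^{-1} = \tau_{h}^{\sharp}(\chi(g)^{-1})\cdot \chi(h)^{-1}
\end{equation*}
together with the obvious compatibility of sheaf pullbacks $(\tau_{g}\tau_{h})^{\sharp} = \tau_{h}^{\sharp}\circ \tau_{h}^{*}(\tau_{g}^{\sharp})$. The first identity is exactly the $1$-cocycle property of $\chi:G_{\X}\to \Gamma(\Y,\O_{\Y}^{\times})=B^{\times}$ constructed in Section~6, viewed inside $\O_{T}^{\an}(T)$ via $B\hookrightarrow \O_{T}^{\an}(T)$. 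Next, to descend $(\Psi_{\rho})$ to $\Qc_{\omega}$, I would use the explicit local $G_{\X}$-action on $V\subset \X$ recorded at the end of Section~6, together with $\rho_{1}^{\sharp}(f(x)) = \eta_{1}(\underline{\rho}_{1})^{-1}\bigl(\tfrac{\partial}{\partial x}\rho_{1}^{\sharp}(x)\bigr)^{2}f(x)$, to compute directly that $\rho^{*}\omega = \chi(\rho)\cdot \omega$ as relative $2$-forms on $\widetilde{\X}$. This transformation law implies that for any local family $\{\Gamma_{t}\}$ of topological $2$-cycles on the fibers of $\widetilde{\X}\to T$ and the associated period function $P(t)=\int_{\Gamma_{t}}\omega_{t}$, the function $\chi(\rho)^{-1}\cdot \tau^{\sharp}(P)$ equals $\int_{\rho_{t}^{-1}(\Gamma_{\tau(t)})}\omega_{t}$, which is again a period function. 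Hence $\Psi_{\rho}$ preserves $\Pc_{\omega}\subset \O_{T}^{\an}$ and descends to $\Qc_{\omega}$; transporting through the isomorphism $\Qc\simeq \Qc_{\omega}$ induced by $\omega$ yields the linearization on $\Qc$.

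For part (2), I would start from the description $(\rho^{*}\xi)_{t} = \rho_{t}^{*}(\xi_{\tau(t)})$ and observe that if $\Gamma_{\tau(t)}$ is a $2$-chain associated with $\xi_{\tau(t)}$ in the sense of Section~2, then $\rho_{t}^{-1}(\Gamma_{\tau(t)})$ is a $2$-chain associated with $(\rho^{*}\xi)_{t}$ (the associated $1$-cycle transforms correctly because $\rho_{t}^{*}$ sends $(C,f)$ to $(\rho_{t}^{-1}(C), f\circ \rho_{t})$, which is compatible with the tame-symbol/divisor formalism). Applying change of variables along $\rho_{t}:\widetilde{\X}_{t}\xrightarrow{\sim}\widetilde{\X}_{\tau(t)}$ and the pointwise specialization $\rho_{t}^{*}\omega_{\tau(t)} = \chi(\rho)(t)\cdot \omega_{t}$ of the identity just established gives
\begin{equation*}
\int_{\rho_{t}^{-1}(\Gamma_{\tau(t)})}\omega_{t} \;=\; \chi(\rho)(t)^{-1}\int_{\Gamma_{\tau(t)}}\omega_{\tau(t)} \;\equiv\; \chi(\rho)(t)^{-1}\cdot \nu_{\tr}(\xi)(\tau(t)) \pmod{\Pc(\omega_{t})}.
\end{equation*}
The right-hand side is exactly the value of $\Psi_{\rho}(\nu_{\tr}(\xi))$ at $t$. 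Since this equality holds for every $t\in T$, Corollary~\ref{basiccor} upgrades it to the equality $\nu_{\tr}(\rho^{*}\xi) = \Psi_{\rho}(\nu_{\tr}(\xi))$ of global sections of $\Qc$.

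The main obstacle is the local transformation identity $\rho^{*}\omega = \chi(\rho)\cdot \omega$: it requires plugging the explicit formulas for $x\mapsto \rho_{1}^{\sharp}(x)$, $y\mapsto \rho_{2}^{\sharp}(y)$, and $v\mapsto \tfrac{\eta_{1}(\underline{\rho}_{1})}{\chi(\rho)}\tfrac{\partial_{y}\rho_{2}^{\sharp}(y)}{\partial_{x}\rho_{1}^{\sharp}(x)}v$ into $\omega$ and watching the factors of $\eta_{1}(\underline{\rho}_{1})$ and $\partial_{x}\rho_{1}^{\sharp}(x)$ cancel, leaving precisely the scalar $\chi(\rho)$. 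I would also double-check consistency on the overlap $V\cap W$ via $v = 1/w$, which is immediate since $\omega$ has a symmetric description $\frac{dx\wedge dy}{wg(y)}$ on $W$. Everything else in the proposition is formal consequence of this computation and the cocycle property of $\chi$.
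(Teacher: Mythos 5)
Your proposal is correct and follows essentially the same route as the paper: the central identity $\rho^*\omega = \chi(\rho)\cdot\omega$ verified from the local description of the $G_{\X}$-action, the resulting integral transformation $\int_{\rho_t^{-1}(\Gamma_{\tau(t)})}\omega_t = \chi(\rho)(t)^{-1}\int_{\Gamma_{\tau(t)}}\omega_{\tau(t)}$ used both to show $\Psi_\rho$ preserves $\Pc_\omega$ and to compare the two sides of (2) fiberwise, and the appeal to Corollary \ref{basiccor} to upgrade the pointwise identity to an equality of sections of $\Qc$. The only cosmetic difference is that the paper handles the equality $\Psi_\rho(\tau^*\Pc_\omega)=\Pc_\omega$ by proving one inclusion and invoking $\rho^*(\Psi_{\rho^{-1}})=\Psi_\rho^{-1}$, which your phrasing implicitly covers.
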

The following proof of Proposition \ref{rkex18} was simplified by the advice from T. Saito.
\begin{proof}[(Proof of Proposition \ref{rkex18})] It is enough to prove $\rank N \le 18$.
By the $G_{\X}$-linearization $(\Psi_{\rho})_{\rho\in G_{\X}}$ of $\Qc$, $N$ becomes a right $G_{\X}$-module.

We consider the following two subgroups $H$ and $I$ of $G_{\X}$. 
We define the normal subgroup $H$ by $\Ker(G_{\X}\rightarrow G_{T})$.
By the definition of $\Psi_{\rho}$, $H$ acts on $N$ as $\pm 1$.
Next, we define the subgroup $I$ of $G_{\X}$ by 
\begin{equation*}
I =\{(\rho_1,\rho_2,\tau,\zeta)\in G_{\X}: \rho_1=\rho_2\in \mathfrak{S}(\{0,1,\infty\})\text{ and }\zeta=1\}
\end{equation*}
where we regard $\mathfrak{S}(\{0,1,\infty\})$ as a subgroup of $\mathfrak{S}(\Sigma)$ by $\{0,1,\infty\}\subset \Sigma$.
By definition, $I$ stabilizes the subset $\{(0,0),(1,1),(\infty,\infty)\}$ of $\Sigma^2$ on $\Y$.
Thus $\widetilde{\Cc}$ is stable under the $I$-action.
Hence for $\bullet = 0,1,\infty$ and $\rho\in I$, $(\rho^*\xi_\bullet)_t$ coincides with either $\pm (\xi_0)_t,\pm (\xi_1)_t$ or $\pm (\xi_\infty)_t$ up to decomposable cycles (see (\ref{geompresentation})).
By Proposition \ref{transregprop}, $\nu_\tr(\xi)$ is determined by $\xi_\ind$, therefore $\nu_\tr(\rho^*\xi_\bullet)$ coincides with either $\pm \nu_\tr(\xi_0),\pm \nu_\tr(\xi_1)$ or $\pm\nu_{\tr}(\xi_\infty)$.
This implies that $N^\can$ is an $I$-submodule of $N$.

Therefore $N^\can$ is a $H I$-submodule of $N$.
Then $N^\can \hookrightarrow N$ induces the following $G_{\X}$-module homomorphism.
\begin{equation}\label{indrepn}
\mathrm{Ind}_{H I}^{G_{\X}} N^\can \lra N
\end{equation}
where $\mathrm{Ind}_{H I }^{G_{\X}} N^\can$ is the induced representation.
Since the $G_{\X}$-module $N$ is generated by $N^\can$, the map (\ref{indrepn}) is surjective.
Then we have
\begin{equation*}
\rank N \le \rank \mathrm{Ind}_{H I}^{G_{\X}} N^\can = |G_{\X}/H I|\cdot \rank N^\can = 3\cdot  |G_{\X}/H I|.
\end{equation*}
Hence it is enough to show $ |G_{\X}/H I| = 6$. 
First, we show $H \cap I = \{\id\}$. 
Let $\rho=(\rho_1,\rho_2,\tau,\zeta)\in H\cap I$.
Then we have $\rho_1=\rho_2\in \mathfrak{S}(\{0,1,\infty\})$, $\tau = \id$ and $\zeta = 1$.
Furthermore, the image of $(\rho_1,\rho_2)\in G_{\Y_0}$ under $G_{\Y_0}\rightarrow G_{T_0}$ is $(\id,\id)$ by the condition of the fiber product.
Since $\mathfrak{S}(\{0,1,\infty\})\hookrightarrow \mathfrak{S}(\Sigma)\twoheadrightarrow \Aut(S_0)$ is the isomorphism, we see that $(\rho_1,\rho_2)=(\id,\id)$. 
Hence $\rho=\id$. 
Second, we calculate $|G_{\X}|$ and $|H|$.
We have that $|G_{\X}| = 2\cdot |G_{\Y}| = 2\cdot |\mathfrak{S}_4\times_{\mathfrak{S}_3}\mathfrak{S}_4|^2 = 2^{11}\cdot 3^2$.
By $|G_{T}| = |\mathfrak{S}_4^2| = 2^6\cdot 3^2$ and the fact that $G_{\X}\rightarrow G_{T}$ is surjective, $|H| =|G_{\X}|/|G_T| =2^5$.
Third, we calculate $|I|$. For each $\rho\in \mathfrak{S}(\{0,1,\infty\})$, the number of elements in $G_{\X}$ of the form $(\rho,\rho,*,1)$ equals to the cardinality of $\Ker(G_{T}\rightarrow G_{T_0})$, i.e. $2^4$.
Hence $|I|=2^4\cdot |\mathfrak{S}(\{0,1,\infty\})| = 2^5\cdot 3$.
To sum up these results, we have
\begin{equation*}
|G_{\X}/H I| = \frac{|G_{\X}|}{|H I|} = \frac{|G_{\X}|}{|H|\cdot |I|} = 6.
\end{equation*}
\end{proof}
\section{The group action on the Picard-Fuchs differential operator}
In this section, we prove Proposition \ref{thxprop} and Proposition \ref{thxprop2} and complete the proof of the main theorem. 
They are proved by examining the $G_{\X}$-actions on the periods and the Picard-Fuchs differential operator.
\subsection{The group action on periods}
First, we prove Proposition \ref{thxprop2}.
By the property of 1-cocycles, we see that $(\Psi_{\rho})_{\rho\in G_{\X}}$ defines a $G_{\X}$-linearization on $\O_T^\an$. 
To prove that this induces the $G_{\X}$-linearization on $\Qc_\omega$, we should prove that $\Psi_{\rho}$ preserves the subsheaf $\Pc_\omega$ which is the local system consisting of period function with respect to $\omega$.

Let $\rho=(\rho_1,\rho_2,\tau,\zeta)\in G_{\X}$. 
By the local description of the $G_{\X}$-action on $\widetilde{\X}$, we can check that $\rho^*\omega = \chi(\rho)\cdot \omega$.
Pulling-back this relation at $\tau(t)\in T$, we have 
\begin{equation}\label{rhotransformula}
(\rho_t^{-1})^*(\omega_{t}) = \chi(\rho^{-1})(\tau(t))\cdot \omega_{\tau(t)}\quad  \in \Gamma(\widetilde{\X}_{\tau(t)},\Omega^2_{\widetilde{\X}_\tau(t)/\C}).
\end{equation}
Here $\rho_t:\widetilde{\X}_t\rightarrow \widetilde{\X}_{\tau(t)}$ is the isomorphism in the diagram (\ref{rhot}) and $\chi(\rho^{-1})(\tau(t))\in \C$ is the value of $\chi(\rho^{-1})\in B$ at $\tau(t)$.
From the equation (\ref{rhotransformula}), for any 2-chain $\Gamma_{\tau(t)}$ on $\widetilde{\X}_{\tau(t)}$, we have 
\begin{equation}\label{sekibunprop}
\begin{aligned}
\int_{\rho_t^{-1}(\Gamma_{\tau(t)})}\omega_t &= \int_{\Gamma_{\tau(t)}}(\rho^{-1}_t)^*(\omega_t) = \chi(\rho^{-1})(\tau(t))\cdot \int_{\Gamma_{\tau(t)}}\omega_{\tau(t)}  \\
&= \tau^\sharp\left(\chi(\rho^{-1})\right)(t)\cdot \int_{\Gamma_{\tau(t)}}\omega_{\tau(t)} = \chi(\rho)(t)^{-1}\cdot \int_{\Gamma_{\tau(t)}}\omega_{\tau(t)}
\end{aligned}
\end{equation}
where the last equality follows from the property of 1-cocycles. This is the key for the proof of Proposition \ref{thxprop2}.
\begin{proof}[(Proof of Proposition \ref{thxprop2})]
We prove (1).
We prove that $\Psi_{\rho}(\tau^*\Pc_\omega) = \Pc_{\omega}$. 
Since $\rho^*(\Psi_{\rho^{-1}})=\Psi_{\rho}^{-1}$, it is enough to show only $(\subset)$.
Let $U'$ be an open subset of $T$ in the classical topology and $\varphi\in \tau^*\Pc_\omega(U')=\Pc_\omega(\tau(U'))$.
Then there exists a $C^\infty$-family of 2-cycles $\{\Gamma_t\}_{t\in \tau(U)}$ such that $\varphi(t) = \int_{\Gamma_t}\omega_t$.
For any $t\in U'$, we have 
\begin{equation*}
(\Psi_{\rho}(\varphi))(t) = \chi(\rho)(t)^{-1}\cdot \varphi(\tau(t)) = \chi(\rho)(t)^{-1}\cdot \int_{\Gamma_{\tau(t)}}\omega_{\tau(t)} \underset{(\ref{sekibunprop})}{=} \int_{\rho_t^{-1}(\Gamma_{\tau(t)})}\omega_t\in \Pc(\omega_t).
\end{equation*}
Hence we have $\Psi_{\rho}(\varphi)\in \Pc_\omega(U')$.
Thus $(\Psi_{\rho})_{\rho\in G_{\X}}$ induces the $G_{\X}$-linearization on $\Qc_\omega = \O_T^\an/\Pc_\omega$.

Next, we prove (2).
By Corollary \ref{basiccor}, it is enough to show that 
\begin{equation}\label{ETS}
\langle \ev_t\left(\nu_{\tr}(\rho^*\xi)\right), [\omega_t]\rangle = \langle \ev_t\left(\Psi_{\rho}(\nu_{\tr}(\xi))\right), [\omega_t]\rangle
\end{equation}
for any $t\in T$.
We take a neighborhood $U'$ of $\tau(t)$ and a $C^\infty$-family of 2-chains $\{\Gamma_{t'}\}_{t'\in U'}$ such that $\Gamma_{t'}$ is a 2-chain associated with $\xi_{t'}$.
We will compute the right-hand side of (\ref{ETS}).
Under the isomorphism $\Qc\xrightarrow{\sim}\Qc_\omega$,  $\Psi_{\rho}(\nu_\tr(\xi))$ is represented by the local holomorphic function 
\begin{equation*}
\tau^{-1}(U')\ni t' \longmapsto \chi(\rho)(t')^{-1}\int_{\Gamma_{\tau(t')}}\omega_{\tau(t')} \underset{(\ref{sekibunprop})}{=} \int_{\rho_{t'}^{-1}\left(\Gamma_{\tau(t')}\right)}\omega_{t'}\in \C.
\end{equation*}
The right-hand side of (\ref{ETS}) is represented by the value of this function at $t$. i.e.
\begin{equation}\label{ans}
\langle \ev_t\left(\Psi_{\rho}(\nu_{\tr}(\xi))\right), [\omega_t]\rangle=\displaystyle \int_{\rho_t^{-1}(\Gamma_{\tau(t)})}\omega_{t} \mod \Pc(\omega_{t}).
\end{equation}
On the other hand, the left-hand side of (\ref{ETS}) is $\langle r((\rho^*\xi)_t), [\omega_t]\rangle  = \langle r(\rho_t^*(\xi_{\tau(t)})), [\omega_t]\rangle $.
Since $\rho_t^{-1}(\Gamma_{\tau(t)})$ is a 2-chain associated with $\rho_t^*(\xi_{\tau(t)})$, the left-hand side of (\ref{ETS}) also coincides with (\ref{ans}).
Hence we have the result.
\end{proof}
\subsection{The group action on the Picard-Fuchs operator}
Next, we will investigate the $G_{\X}$-action on the differential operator $\Dc$.
For $\tau \in G_{T}$, we define differential operators $\Dc^{\tau}_1,\Dc^{\tau}_2:\O_T^\an \rightarrow \O_T^\an$ as follows:
\begin{equation*}
\begin{aligned}
&\Dc^{\tau}_1 = a'(1-a') \frac{\del^2}{(\del a')^2} +(1-2a')\frac{\del}{\del a'} -\frac{1}{4}  \\
&\Dc^{\tau}_2 = b'(1-b') \frac{\del^2}{(\del b')^2} +(1-2b')\frac{\del}{\del b'} -\frac{1}{4}
\end{aligned}
\end{equation*}
where $a'=\tau^\sharp(a)$ and $b'=\tau^\sharp(b)$.
By definition, 
\begin{equation}\label{twistD}
\Dc^\tau_i(\tau^\sharp(\varphi)) = \tau^\sharp(\Dc_i(\varphi))\:\:(i=1,2)
\end{equation}
holds for any local section $\varphi$ of $\O_T^\an$.
We have the following transformation formulas.
\begin{prop}\label{transformationformulae}
For $\rho=(\rho_1,\rho_2,\tau,\zeta)\in G_{\X}$, we have the following relations in the ring of differential operators on $\O_T^\an$.
\begin{equation*}
\begin{aligned}
&\Dc_1\cdot \chi(\rho)^{-1} = \chi(\rho)^{-1}\cdot \phi_1(\tau)^{-2}\cdot \Dc^\tau_1 \\
&\Dc_2\cdot \chi(\rho)^{-1} = \chi(\rho)^{-1}\cdot \phi_2(\tau)^{-2}\cdot \Dc^\tau_2 
\end{aligned}
\end{equation*}
where we regard $\chi(\rho),\phi_1(\tau)$ and $\phi_2(\tau)$ as differential operators by multiplication.
\end{prop}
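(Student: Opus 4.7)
The plan is to reduce the two claimed operator identities to a single-variable problem in $a$ (respectively $b$), then compare the resulting second-order operators by matching both their leading coefficients and their two-dimensional kernels. For the reduction, I would write $\chi(\rho) = \zeta\cdot\phi_1(\tau)\cdot\phi_2(\tau)$, where $\zeta\in\mu_4$ is a constant, $\phi_1(\tau)$ depends only on $a$, and $\phi_2(\tau)$ only on $b$. Since $\tau\in G_T=\mathfrak{S}_4\times\mathfrak{S}_4$ acts componentwise on $T$, the operator $\Dc_1^\tau$ has coefficients depending only on $a$ (via $a'=\tau^\sharp(a)$) and involves only $\partial/\partial a'$, which by the chain rule becomes a second-order operator in $\partial/\partial a$ with $b$ a parameter; the same holds for $\Dc_1$. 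Both therefore commute with multiplication by $\zeta$ and by $\phi_2(\tau)$, and after cancelling these factors the first identity reduces to
\begin{equation*}
\Dc_1\cdot\phi_1(\tau)^{-1} \;=\; \phi_1(\tau)^{-3}\cdot\Dc_1^\tau,
\end{equation*}
or equivalently $\Dc_1 = \phi_1(\tau)^{-3}\,\Dc_1^\tau\,\phi_1(\tau)$ as operators on functions of $a$. The second identity reduces symmetrically by swapping the roles of $a$ and $b$.

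Both sides of this reduced identity are second-order linear differential operators in $\partial/\partial a$, so by the Wronskian characterisation they coincide once they share a leading coefficient and a two-dimensional kernel. For the kernel I would invoke Proposition~\ref{thxprop2}: the linearisation $\Psi_\rho$ preserves $\Pc_\omega$, hence
\begin{equation*}
\Psi_\rho\bigl(2P_i(a)P_j(b)\bigr) \;=\; 2\zeta^{-1}\bigl(\phi_1(\tau)^{-1}P_i(a')\bigr)\bigl(\phi_2(\tau)^{-1}P_j(b')\bigr) \;\in\; \Pc_\omega.
\end{equation*}
Since Proposition~\ref{perprop} identifies $\Pc_\omega$ locally with the $\C$-span of the products $2P_k(a)P_l(b)$, a separation-of-variables argument (evaluate at a $b$-value where the $b$-factor is nonzero) yields $\phi_1(\tau)^{-1}P_i(a')\in\langle P_k(a)\rangle_\C$. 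Inverting, $\phi_1(\tau)P_k(a)\in\langle P_i(a')\rangle_\C=\ker\Dc_1^\tau$, which is exactly the statement that each $P_k(a)$ lies in the kernel of $\phi_1(\tau)^{-3}\Dc_1^\tau\phi_1(\tau)$.

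For the leading coefficient, using the chain rule $(\partial/\partial a')^2 = (da/da')^2(\partial/\partial a)^2 + (\text{lower order})$, the leading coefficient of $\Dc_1^\tau$ in $(\partial/\partial a)^2$ is $a'(1-a')(da/da')^2$; forming $\phi_1(\tau)^{-3}\Dc_1^\tau\phi_1(\tau)$ multiplies this by $\phi_1(\tau)^{-3}\cdot\phi_1(\tau)=\phi_1(\tau)^{-2}$, and matching with $a(1-a)$ amounts to
\begin{equation*}
a'(1-a')\,(da/da')^2 \;=\; \phi_1(\tau)^2\,a(1-a).
\end{equation*}
Expanding $\phi_1(\tau)^2$ from its definition as the squared ratio of $\sqrt{a(1-a)}/(a^2-a+1)$ with its $\tau^\sharp$-pullback, this is equivalent to the assertion that the meromorphic $1$-form $da/(a^2-a+1)$ is invariant up to sign under the $\mathfrak{S}(\{0,1,\infty\})$-action on $a$; I would verify it by direct substitution on the two generators $a\mapsto 1-a$ and $a\mapsto 1/a$, but conceptually it reflects the fact that $(a^2-a+1)^3/\bigl(a(a-1)\bigr)^2$ equals $j(a)/256$ and is therefore $\mathfrak{S}_3$-invariant.

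I expect this leading-coefficient identity to be the main obstacle: it is the precise compatibility that justifies the otherwise ad hoc normalisation $\sqrt{a(1-a)}/(a^2-a+1)$ built into $\phi_1(\tau)$, and while each individual case is elementary, the verification requires tracking the six M\"obius transformations carefully. Once this match and the kernel match are both in hand, the operator identity for $\Dc_1$ follows, and the argument for $\Dc_2$ is obtained verbatim by interchanging $a$ and $b$.
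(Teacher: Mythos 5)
Your proposal is correct, but it verifies the key identity by a genuinely different route than the paper. Both arguments make the same first reduction: since $\chi(\rho)=\zeta\phi_1(\tau)\phi_2(\tau)$ and $\zeta\phi_2(\tau)$ commutes with $\Dc_1$, the claim becomes $\phi_1(\tau)^{3}\cdot\Dc_1\cdot\phi_1(\tau)^{-1}=\Dc_1^{\tau}$ (and symmetrically for $\Dc_2$). At that point the paper simply computes: using $\phi_1(\tau)\in\{\pm1,\pm\sqrt{-1}\sqrt{1-a},\pm\sqrt{-1}\sqrt{a}\}$ it expands the left-hand side via the commutation rules for $\del/\del a$ with powers of $a$ and $1-a$, computes $\Dc_1^{\tau}$ by the chain rule for each of the three classes of $\underline{\tau}_1$, and checks the three resulting operators agree. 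You instead avoid the case analysis by the principle that two second-order operators in $\del/\del a$ with the same leading coefficient and two common linearly independent solutions (with nonvanishing Wronskian, guaranteed here since $a(1-a)\neq 0$ on $T$) must coincide: the solution statement $\phi_1(\tau)P_k(a)\in\langle P_1(a'),P_2(a')\rangle_{\C}$ you extract from Proposition \ref{thxprop2}(1) together with Proposition \ref{perprop} by separation of variables (and a dimension count for the ``inverting'' step), and the leading-coefficient match reduces to the sign-invariance of $da/(a^2-a+1)$ under the $\mathfrak{S}(\{0,1,\infty\})$-action, checked on the generators $a\mapsto 1-a$ and $a\mapsto 1/a$; I verified both computations go through. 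There is no circularity, since the paper proves Proposition \ref{thxprop2} before Proposition \ref{transformationformulae} by a purely geometric argument ($\rho^*\omega=\chi(\rho)\omega$), but your proof does make \ref{transformationformulae} logically dependent on \ref{thxprop2}, whereas the paper's version is a self-contained algebraic identity usable independently of the period geometry. What your route buys is less bookkeeping (no sign ambiguities, no six M\"obius cases for $\Dc_1^\tau$) and a conceptual explanation of the otherwise ad hoc factor $a^2-a+1$ in the definition of $\phi_1$; what the paper's route buys is independence from the geometric input and a completely explicit verification. One small caveat: Proposition \ref{perprop} gives a $\Z$-basis of the local system $\Pc_\omega$, not a $\C$-spanning statement, but since you only need containment of its sections in the $\C$-span of the $2P_k(a)P_l(b)$, this is harmless.
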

\begin{proof}
We prove the first equation.
The second equation is proved similarly. Since $\chi(\rho) = \zeta\phi_1(\tau)\phi_2(\tau)$ and $\zeta\phi_2(\tau)$ commutes with $\Dc_1$, it is enough to show that 
\begin{equation}\label{ETS2}
\phi_1(\tau)^3\cdot \Dc_1\cdot \phi_1(\tau)^{-1} = \Dc_1^\tau.
\end{equation}
Let $\tau = (\tau_1,\tau_2)\in G_{T}=\mathfrak{S}_4^2$.
We denote the image of $\tau_1$ under $\mathfrak{S}_4\rightarrow \Aut(S_0)=\mathfrak{S}(\{0,1,\infty\})$ by $\underline{\tau}_1$.
By (\ref{sqrteqn}) and the table in Proposition \ref{chikeisan}, $\phi_1(\tau)$ is determined by $\underline{\tau}_1$ up to sign and coincides with either $\pm 1, \pm \sqrt{-1}\sqrt{1-a}$ or $\pm \sqrt{-1}\sqrt{a}$.
Using $\frac{\del}{\del a}\cdot a^\lambda = \lambda a^{\lambda-1}+a^{\lambda}\frac{\del}{\del a}$ and $\frac{\del}{\del a}\cdot (1-a)^\lambda = -\lambda (1-a)^{\lambda-1}+(1-a)^{\lambda}\frac{\del}{\del a}$, we can compute the left-hand side of (\ref{ETS2}) as follows:
\begin{equation}\label{LHS}
\begin{aligned}
&a(1-a)\dfrac{\del^2}{\del a^2} + (1-2a)\dfrac{\del}{\del a} -\dfrac{1}{4} && (\underline{\tau}_1 = \id, (0\:\:1)) \\
&-a(1-a)^2\frac{\del^2}{\del a^2}-(1-a)^2\frac{\del}{\del a} -\frac{1}{4} && (\underline{\tau}_1 = (1\:\;\infty), (0\:\:1\:\:\infty)) \\
&-a^2(1-a)\frac{\del^2}{\del a^2}+a^2\frac{\del}{\del a}-\frac{1}{4}  &&  (\underline{\tau}_1 = (0\:\:\infty), (0\:\:\infty\:\;1))
\end{aligned}
\end{equation}
On the other hand, $\Dc_1^\tau$ is also determined by $\underline{\tau}_1\in \Aut(S_0)=\mathfrak{S}(\{0,1,\infty\})$ by definition.
We can check (\ref{ETS2}) holds for each $\underline{\tau}_1$.
For example, when $\underline{\tau}_1 = (1\:\:\infty)$, the differential operator $\Dc_1^\tau$ is computed as follows: since $a' = \frac{a}{a-1}$, we have 
\begin{equation*}
\begin{aligned}
&\frac{\del}{\del a'} = \frac{\del a}{\del a'}\cdot \frac{\del }{\del a} = -\frac{1}{(a'-1)^2}\cdot \frac{\del}{\del a} = -(a-1)^2\cdot \frac{\del }{\del a} \\
&\frac{\del^2}{(\del a')^2}= \left(-(a-1)^2\cdot \frac{\del}{\del a}\right)^2 = (a-1)^4\frac{\del^2}{\del a^2}+2(a-1)^3\frac{\del }{\del a}.
\end{aligned}
\end{equation*}
By substituting $a',\frac{\del}{\del a'}, \frac{\del^2}{(\del a')^2}$ in $\Dc_1^{\tau}$ by them, we can confirm that $\Dc_1^\tau$ coincides with the second differential operator in (\ref{LHS}).
\end{proof}

Finally, we prove Proposition \ref{thxprop}.

\begin{proof}[(Proof of Proposition \ref{thxprop})] 
By Proposition \ref{transformationformulae}, for a local section $\varphi$ of $\O_T^\an$, we have 
\begin{equation*}
\begin{aligned}
\Dc_1\left(\Psi_{\rho}(\varphi)\right) &= \Dc_1\left(\chi(\rho)^{-1}\cdot \tau^\sharp(\varphi)\right) = \chi(\rho)^{-1}\cdot \phi_1(\tau)^{-2}\cdot \Dc_1^\tau(\tau^\sharp(\varphi)) \\
&\underset{(\ref{twistD})}{=} \chi(\rho)^{-1}\cdot \phi_1(\tau)^{-2}\cdot \tau^\sharp(\Dc_1(\varphi)) \\
\Dc_2\left(\Psi_{\rho}(\varphi)\right) &=  \chi(\rho)^{-1}\cdot \phi_2(\tau)^{-2}\cdot \tau^\sharp(\Dc_2(\varphi))
\end{aligned}
\end{equation*}
In particular, we have $\Dc(\Psi_{\rho}(\varphi)) = \Theta_{\rho}(\Dc(\varphi))$. 
Then by Proposition \ref{thxprop2}, we have 
\begin{equation*}
\Dc(\nu_{\tr}(\rho^*\xi)) = \Dc(\Psi_{\rho}(\nu_\tr(\xi))) = \Theta_{\rho}(\Dc(\nu_{\tr}(\xi))).
\end{equation*}
\end{proof}

\bibliographystyle{plain}
\bibliography{reference}

\end{document}